\numberwithin{equation}{section} \theoremstyle{plain}
\newtheorem{theorem}{Theorem}[section]
\newtheorem{lemma}[theorem]{Lemma}
\newtheorem{cor}[theorem]{Corollary}
\newtheorem{prop}[theorem]{Proposition}
\theoremstyle{definition}
\newtheorem{definition}[theorem]{Definition}
\theoremstyle{remark}
\newtheorem{remark}[theorem]{Remark}
\numberwithin{equation}{section}
\newcommand{\loc}{\operatorname{loc}}
\newcommand{\comp}{\operatorname{comp}}
\newcommand{\lloc}{L_{\loc}}
\newcommand{\dom}{\operatorname{Dom}}
\newcommand{\End}{\operatorname{End}}
\newcommand{\supp}{\operatorname{supp}}
\newcommand{\IM}{\operatorname{Im}}
\newcommand{\RE}{\operatorname{Re}}
\newcommand\CC{\mathbb{C}}
\newcommand\RR{\mathbb{R}}
\newcommand\ZZ{\mathbb{Z}}
\newcommand{\mcomp}{C_{c}^{\infty}(M)}
\newcommand{\ric}{\operatorname{Ric}}
\newcommand{\Del}{\Delta}
\renewcommand{\div}{\operatorname{div}}
\newcommand{\grad}{\operatorname{grad}}
\def\pa{\partial}
\def\R{\mathbb R}
\newcommand{\vbn}{\mathcal{E}}
\newcommand{\vcomp}{C_{c}^{\infty}(\mathcal{E})}
\newcommand{\hvmaxp}{H_{p,\max}}
\newcommand{\hvminp}{H_{p, \min}}
\newcommand{\rank}{\operatorname{rank}}
\newcommand{\hess}{\operatorname{Hess}}
\newcommand{\ricc}{\operatorname{Ric}}
\newcommand{\vep}{\varepsilon}
\begin{document}

\title[Ornstein--Uhlenbeck Semigroups in $L^p$-spaces on Manifolds]{Generalized Ornstein--Uhlenbeck Semigroups in weighted $L^p$-spaces on Riemannian Manifolds}
\author{Ognjen Milatovic, Hemanth Saratchandran}
\address{Department of Mathematics and Statistics\\
         University of North Florida   \\
       Jacksonville, FL 32224 \\
        USA
           }
\email{omilatov@unf.edu}
\address{School of Mathematical Sciences \\
University of Adelaide \\
Adelaide \\
South Australia 5005 \\
Australia}
\email{hemanth.saratchandran@adelaide.edu.au}

\subjclass[2020]{Primary 47B25,58J05, 58J65; Secondary 35P05, 60H30}

\keywords{Calder\'on--Zygmund inequality, Feynman--Kac formula, Ornstein--Uhlenbeck semigroup, Riemannian manifold, Schr\"odinger operator, Separation property}

\begin{abstract} Let $\mathcal{E}$ be a Hermitian vector bundle over a Riemannian manifold $M$ with metric $g$, let $\nabla$ be a metric covariant derivative on $\mathcal{E}$. We study the generalized Ornstein--Uhlenbeck differential expression $P^{\nabla}=\nabla^{\dagger}\nabla u+\nabla_{(d\phi)^{\sharp}}u-\nabla_{X}u+Vu$, where  $\nabla^{\dagger}$ is the formal adjoint of $\nabla$, $(d\phi)^{\sharp}$ is the vector field corresponding to $d\phi$ via $g$, $X$ is a smooth real vector field on $M$, and $V$ is a self-adjoint locally integrable section of the bundle $\textrm{End }\mathcal{E}$. We show that (the negative of) the maximal realization $-H_{p,\max}$ of $P^{\nabla}$ generates an analytic quasi-contractive semigroup in $L^p_{\mu}(\mathcal{E})$, $1<p<\infty$, where $d\mu=e^{-\phi}d\nu_{g}$, with $\nu_{g}$ being the volume measure.
Additionally, we describe a Feynman--Kac representation for the semigroup generated by 
$-H_{p,\max}$. For the Ornstein--Uhlenbeck differential expression acting on functions, that is, $P^{d}=\Delta u+(d\phi)^{\sharp}u-Xu+Vu$, where $\Delta$ is the (non-negative) scalar Laplacian on $M$ and $V$ is a locally integrable real-valued function, we consider another way of realizing $P^{d}$ as an operator in $L^p_{\mu}(M)$ and, by imposing certain geometric conditions on $M$, we prove another semigroup generation result. 
\end{abstract}

\maketitle

\section{Introduction}\label{S:intro} Owing to its importance in stochastic analysis, the \emph{generalized Ornstein--Uhlenbeck operator (with a potential $V$)},
\begin{equation}\label{E:intro-1}
P=-\Delta u+\grad\phi\cdot \grad u-X\cdot \grad u+Vu,
\end{equation}
where $\Delta$ and $\grad$ are the usual Laplacian and gradient operators on $\mathbb{R}^n$, $X\in C^{\infty}(\RR^n; \RR^n)$ is a vector field, and $V\in C^{\infty}(\RR^n)$, $\phi\in C^{\infty}(\RR^n)$ are real-valued functions, has been the object of investigation by many researchers. We point to the recent paper~\cite{bog-18} and numerous references therein for the information about the studies of various questions regarding the operator~(\ref{E:intro-1}) and, in particular, applications to stochastic analysis. One aspect of the study of~(\ref{E:intro-1}), sparking quite a bit of interest in the last two decades, involves formulating the conditions under which (a suitable realization of) the expression~(\ref{E:intro-1}) serves as the negative generator of an analytic $C_0$-semigroup in the weighted space $L^p_{\mu}:=L^p(\RR^n, d\mu)$, $1<p<\infty$, with $d\mu=e^{-\phi}\,dx$, where $dx$ is the usual Lebesgue measure. A prominent place in the progression of thought on this problem occupies the paper~\cite{MPR-05}, in which the authors established the mentioned generation property for the realization $R_1$ of~(\ref{E:intro-1}) with $V\equiv 0$, with the corresponding domain
\begin{equation*}
\dom(R_1)= W^{2,p}_{\mu}:=\{u\in W^{2,p}_{\loc}(\RR^n)\colon D^{\alpha}u\in L^p_{\mu}, |\alpha|\leq 2\},
\end{equation*}
where $W^{2,p}_{\loc}(\RR^n)$ is the usual local Sobolev space on $\RR^n$ with $2$ indicating the highest differential order, and $D^{\alpha}$ is the partial derivative of order $|\alpha|:=\alpha_1+\alpha_2+\dots+\alpha_n$ corresponding to the multi-index $\alpha=(\alpha_1,\alpha_2,\dots \alpha_n)$. Subsequently, the authors of~\cite{Ko-Yo-10} established the semigroup generation property for a realization $R_2$ of~(\ref{E:intro-1}) with the domain $\dom(R_2)=\{u\in W^{2,p}_{\mu}\colon Vu\in L^p_{\mu}\}$. A  milestone in the semigroup generation problem for~(\ref{E:intro-1}), resulting in a considerable improvement of the results of~\cite{Ko-Yo-10,MPR-05}, is the paper~\cite{So-Yo-13}, in which the authors used a direct approach based on a refinement of the integration by parts technique featured in~\cite{So-12} in the special case of~(\ref{E:intro-1}) with $X\equiv 0$. This approach enabled the authors of~\cite{So-Yo-13} to establish their semigroup generation result without using the condition $|\div X|\leq \varepsilon |\grad \phi|^2+ C_{\varepsilon}$, present in~\cite{Ko-Yo-10,MPR-05} as a way of guaranteeing the separation property  of the operator $S$ in $L^p(\RR^n)$ defined via $Su=e^{-\phi/p}Pe^{\phi/p}u$. (Here, the ``separation property" essentially means that $\dom(S)\subseteq W^{2,p}(\RR^n)$).

In this article we study the generation of analytic $C_0$-semigroups problem for the following geometric analogue of~(\ref{E:intro-1}):
\begin{equation}\label{E:intro-1-a}
P^{\nabla}u:=\nabla^{\dagger}\nabla u+\nabla_{(d\phi)^{\sharp}}u-\nabla_{X}u+Vu.
\end{equation}
Here,  $\nabla$ is a (smooth) metric covariant derivative on a Hermitian vector bundle $\vbn$ over a Riemannian manifold $M$, $\nabla^{\dagger}$ is the formal adjoint of $\nabla$ with respect to the usual inner product in $L^2(\vbn)$,  $\phi$ is a smooth real-valued function on $M$,  the symbol $(d\phi)^{\sharp}$ stands for the vector field corresponding to $d\phi$ via $g$, $X$ is a smooth \emph{real} vector field on $M$, and $V\in L^{\infty}_{\loc}(\End \vbn)$ such that $V(x)\colon\vbn_x\to\vbn_x$ is a self-adjoint operator for all $x\in M$. The operator $P^{\nabla}$ includes, as a special case with $\phi\equiv 0$, the covariant Schr\"odinger operator with a drift. The latter operator was studied in~\cite{Mil-19}, where it was shown, in analogy with the $\RR^n$-situation considered in~\cite{So-12}, that on a complete manifold $M$ and under suitable conditions on $X$ and $V$, (the negative of) the maximal realization $\hvmaxp$ of $P^{\nabla}$ (with $\phi\equiv 0$) in $L^p(\vbn)$, $1<p<\infty$, generates a contractive $C_0$-semigroup, and, furthermore, (again for $\phi\equiv 0$) the closure of the minimal realization $\overline{\hvminp}:=\overline{P^{\nabla}|_{\vcomp}}$ coincides with $\hvmaxp$ (see sections~\ref{SS:s-2-1}--\ref{SS:min-max-rel-1} for more details about the notations used thus far in this paragraph and the notion of maximal/minimal realization).

We are interested in seeing to what extent the semigroup generation results of~\cite{So-Yo-13} carry over to the operator $P^{\nabla}$, suitably realized in the weighted space $L^p_{\mu}(\vbn)$, $1<p<\infty$, with $d\mu:=e^{-\phi}\,d\nu_{g}$, where $\nu_{g}$ stands for the Riemannian volume measure on $M$. As the reader can see in the first part of our theorem~\ref{T:main-1}, if $M$ is geodesically complete and if $\phi$, $X$ and $V$ satisfy the conditions specified in (V1)--(V2) and (A1)--(A4) in section~\ref{assump_V_X_etc} below, which (in the case of an operator acting on scalar functions) are analogous to those in~\cite{So-Yo-13}, then $\hvmaxp=\overline{\hvminp}$ and $-\hvmaxp$ generates a quasi-contractive analytic $C_0$-semigroup in $L^p_{\mu}(\vbn)$ (for more details on the semigroup terminology see section~\ref{SS:accretive}). In the proof of this part we use the transformation $P^{\nabla}\mapsto e^{-\phi/p}P^{\nabla}e^{\phi/p}$ to reduce the  problem to the case of the covariant Schr\"odinger operator with a drift (acting in the usual space $L^p(\vbn)$), and, subsequently, we apply to the latter operator the generation result from~\cite{Mil-19} mentioned in the preceding paragraph.

The second part of theorem~\ref{T:main-1} features more stringent conditions on $\phi$, $X$ and $V$, described in (V1)--(V2) and (A1)--(A5), as well as the condition (A6) which imposes a requirement on the various constants present in (A1)--(A5). Assuming the geodesic completeness of $M$ and adapting the refined integration by parts technique from~\cite{So-Yo-13} to our setting, we obtain the coercive estimate (see theorem~\ref{T:main-1} for a more precise formulation)
\begin{equation}\label{E:intro-2}
\|hu\|_{p,\mu}\leq C\|(\lambda+\hvmaxp)u\|_{p,\mu},
\end{equation}
for all $u\in\dom(\hvmaxp)$ and all $\lambda\geq \lambda_1$, where $h$ is a minorizing (non-negative) function (of class $C^1$) for the potential $V$ (see condition (V2)), $C$ is a constant (independent of $u$), and  $\lambda_1>0$ is a sufficiently large real number.

The estimate~(\ref{E:intro-2}) crucially relies on the ``separation-type" condition (A5), which (at least in the form $\beta_3=0$) goes back to~\cite{Everitt-Giertz77} in the context of Schr\"odinger operators on $\RR^n$. Specializing for the moment to the case $\phi\equiv 0$ and $X\equiv 0$ in~(\ref{E:intro-1-a}), we say (in the terminology of~\cite{Everitt-Giertz77}) that the expression $\nabla^{\dagger}\nabla+V$ is \emph{separated} in $L^p(\vbn)$ if the following condition is met: for all $u\in\dom (\hvmaxp)$, we have $Vu\in L^p(\vbn)$, where we understand that $\hvmaxp$ now refers to the maximal realization of  $\nabla^{\dagger}\nabla+V$. Besides~\cite{Everitt-Giertz77}, the reader can see the papers~\cite{boi-2,BH-99} and references therein for more on the separation of Shro\"dinger-type operators on $\mathbb{R}^n$. Some results on the separation of the perturbations (by an operator of order 0) of the Laplacian (also biharmonic and quadharmonic operator) in $L^2$-space on a Riemannian manifold (with curvature bound assumptions in the biharmonic and quadharmonic case), can be found in the articles~\cite{Mil-18, Mil-Sar-19, Sarat-21}. Regarding the $L^p$-situation, in the earlier article~\cite{Mil-Sar-19} we proved the separation property for $\nabla^{\dagger}\nabla+V$ in $L^p(\vbn)$, $1<p<\infty$, by assuming (V1)--(V2) and (A5), the geodesic completeness of $M$, and lower boundedness (by a constant) of the Ricci curvature of $M$. In contrast to the procedure used in~\cite{Mil-Sar-19}, the refined integration by parts procedure used in the proof of~(\ref{E:intro-2}) does not rely on a sequence of (genuine) Laplacian cut-off functions (see section 3.1 in~\cite{Gue-book}), which is known to exist if one assumes lower bounded Ricci curvature or, more generally, as in~\cite{BS-18}, if one assumes that the Ricci curvature is bounded from below by a (possibly unbounded) non-positive function depending on the distance from a reference point. Thus, apart from being instrumental in latter parts of the paper, the coercive estimate~(\ref{E:intro-2}) paves the way to the separation property for $\nabla^{\dagger}\nabla+V$ in $L^p(\vbn)$, $1<p<\infty$, without any assumptions on $M$ other than the geodesic completeness (see corollary~\ref{C:main-1}). In the case $p \neq 2$, this result represents the first time a separation result for such operators on non-compact Riemannian manifolds has been obtained without any curvature bound assumptions.

In theorem~\ref{T:main-2} we consider the following version of~(\ref{E:intro-1-a}):
\begin{equation}\label{E:intro-3}
P^{d}u:=\Delta u+(d\phi)^{\sharp}u-Xu+Vu,
\end{equation}
where $\Delta$ is the (non-negative) scalar Laplacian, $\phi$ is a smooth real-valued function on $M$, $X$ is a smooth vector field, $V$ is a \emph{real-valued} function of the class $C^1(M)$, and $(d\phi)^{\sharp}u$ and $X u$ indicate the actions of the vector-fields $(d\phi)^{\sharp}$ and $X$ on the function $u$. We define a realization $H^{d}$ of $P^{d}$ in $L^p_{\mu}(M)$, where $d\mu=e^{-\phi}\,d\nu_{g}$ (with $d\nu_{g}$ being the Riemannian volume measure), as follows: $H^{d}u:=P^{d}u$, for all $u\in\dom(H^{d})$, where
\begin{equation*}
\dom (H^{d})=\{u\in L^p_{\mu}(M)\cap W^{2,p}_{\loc}(M)\colon  P_{j}u\in L^p_{\mu}(M),\,\, j=1,\dots, 4\}
\end{equation*}
and
\begin{equation*}
P_1u:=\Delta u,\,\, P_2u:=(d\phi)^{\sharp}u,\,\,  P_3u:=Xu,\,\, P_{4}u:=Vu.
\end{equation*}
In part (i) of theorem~\ref{T:main-2} we show that $H^{d}$ generates a quasi-contractive analytic $C_0$-semigroup in $L^p_{\mu}(M)$. The proof rests on the coercive estimates~(\ref{more_coer_est_1_1}) and~(\ref{more_coer_est_1_2}) below, which (if one looks at the definition of $\dom (H^{d})$) play a ``separating" role by targeting (in analogy with how the estimate~(\ref{E:intro-2}) separates out $hu$) the items $(d\phi)^{\sharp}u$ and $\hess u$, where $\hess u$ refers to the Hessian of $u$ (defined in section~\ref{SS:hess}). The reader will note that, unlike~(\ref{E:intro-2}), the estimates (\ref{more_coer_est_1_1}) and~(\ref{more_coer_est_1_2}) are stated for $u\in \mcomp$, which, alongside the equality $\dom(H^{d})= W^{2,p}_{\mu}(M)$ claimed in part (ii) of theorem~\ref{T:main-2}, hints that we are likely to need a density result of $\mcomp$ in the  weighted Sobolev space
\begin{equation}\label{E:intro-4}
W^{2,p}_{\mu}(M):=\{u\in W^{2,p}_{\loc}(M)\colon u\in L^p_{\mu},\, du\in L^p_{\mu},\,\textrm{ and } \hess u\in L^p_{\mu}\}.
\end{equation}
To our knowledge, when it comes to $L^p$-Sobolev spaces of order $k\geq 2$, $1<p<\infty$, the density of $\mcomp$ is guaranteed under more stringent conditions than the geodesic completeness of $M$; see the recent papers~\cite{HMRV-2021, IRV-19, IRV-20, M-V-21-CH} for up-to-date results on this problem. As discussed in lemma~\ref{main-2_density} below, in the case of our space $W^{2,p}_{\mu}(M)$  we require the existence of weak Hessian cut-off functions (described in remark~\ref{R:HYP-H}). In fact, there are two additional forces in play in the proof of theorem~\ref{T:main-2}: (i) the domination estimate of lemma~\ref{more_coer_est_3}, crucially important for~(\ref{more_coer_est_1_1}) and~(\ref{more_coer_est_1_2}), which says that if $U\geq c_0>0$ is a function satisfying (A5), then for all (sufficiently small) $\vep>0$ there exists a constant $\widetilde{C}_{\vep}>0$ such that
\begin{equation}\label{E:intro-4-a}
\|Udv\|_{p}\leq \vep \|\Delta v\|_{p}+\widetilde{C}_{\vep}\|Uv\|_{p},
\end{equation}
for all $v\in\mcomp$, where $\|\cdot\|_{p}$ is the usual $L^p$-norm on $M$;  (ii) the $L^p$-Calder\'on--Zygmund inequality, which says (see section~\ref{SS:c-z} for a precise formulation and for pointers to some recent studies of this problem) that there exists a constant $C>0$, such that
\begin{equation}\tag{CZ(p)}
\|\hess u\|_{p}\leq C(\|\Delta u\|_{p}+\|u\|_{p}),
\end{equation}
for all $u\in\mcomp$. It turns out that the mentioned density, the inequality (CZ(p)), the domination property~(\ref{E:intro-4-a}), the coercive estimates~(\ref{more_coer_est_1_1}) and~(\ref{more_coer_est_1_2}) become accessible to us (see remark~\ref{R:HYP-H} for further discussion) if $M$ has a positive injectivity radius and bounded Ricci curvature. To get to the key domination estimate~(\ref{E:intro-4-a}), we first establish its analogue (see lemma~\ref{more_coer_est_2}) on a Euclidean ball in $\mathbb{R}^n$ by using certain properties from~\cite{MPR-05}. After that, we globalize the mentioned local estimate with the help of harmonic coordinates (recalled in appendix~A), whose feasibility rests on the property~(\ref{E:har-rad-lbound}), which holds true in the setting of positive injectivity radius and bounded Ricci curvature.

Going back to the expression $P^{\nabla}$ in~(\ref{E:intro-1-a}) and assuming $V\geq 0$, the geodesic completeness of $M$, (A1)--(A2), and (A3)--(A4) with $h$ replaced by zero (for conditions (Aj), $j=1,2,3,4$, we refer to section~\ref{assump_V_X_etc}), in theorem~\ref{T:main-3} we obtain a Feynman--Kac representation of the (quasi-contractive $C_0$-semigroup) $(e^{-t\hvmaxp})_{t\geq 0}$ in $L_{\mu}^p(\vbn)$. This representation is very similar to the one for the case of the covariant Schr\"odinger operator $\nabla^{\dagger}\nabla+V$ in $L^2(\vbn)$ with a potential $0\leq V\in\lloc^2(\End\vbn)$ studied in~\cite{Gu-10}, with the difference occurring in the underlying diffusion (solution to a suitable stochastic differential equation)---in the setting of~\cite{Gu-10} this is the usual Brownian motion on $M$, while in our case the corresponding diffusion takes into account the presence of the vector  fields $X$ and $(d\phi)^{\sharp}$. We refer the reader to the section~\ref{SS:FK-OU} below, where we describe the stochastic analysis setting needed for understanding the statement of theorem~\ref{T:main-3}. Lastly, we mention that in the recent article~\cite{Boldt-Guneysu-20}, the authors gave a probabilistic representation  for the semigroup in $L^2(\vbn)$ corresponding to a (closed sectorial) realization of the expression $\nabla^{\dagger}\nabla +F$ in $L^2(\vbn)$, where $F\colon C^{\infty}(\vbn)\to C^{\infty}(\vbn)$ is an arbitrary differential operator of order $\leq 1$ with not necessarily ``real coefficients." We point to section~\ref{SS:FK-OU} below for more details about the result of~\cite{Boldt-Guneysu-20} and its relationship to our context.

The paper contains nine sections and an appendix. In section~\ref{S:res} we describe the main objects used in this paper: the geometric setting (including the basic differential operators), the (weighted) $L^p$-Sobolev spaces, and the generalized Ornstein--Uhlenbeck differential expression and its operator realizations in the weighted $L^p$-space. Additionally, section~\ref{S:res} contains the statements of the main results: for the first semigroup generation result see subsection~\ref{SS:state-res-1}, for the separation result see subsection~\ref{SS:sep-eg}, for the  second semigroup generation result see subsection~\ref{SS:state-res-2}, and for a description of the stochastic analysis preliminaries and the statement of the Feynman--Kac formula see subsection~\ref{SS:FK-OU}. In section~\ref{S:prelim-1} we collect some basic calculus facts (various product and chain rules), several elementary inequalities, and the statement of the $L^p$-Calder\'on--Zygmund inequality. We end section~\ref{S:prelim-1} with a formula that serves as the roadmap for the integration by parts process for subsequent parts of the paper. After recalling some abstract terminology for the semigroups on Banach spaces, in section~\ref{S:hmax-1} we prove the first semigroup generation result (part (i) of theorem~\ref{T:main-1}). In section~\ref{S:coercive-1}, we prove the first coercive estimate of this paper (part (ii) of theorem~\ref{T:main-1}). Using this coercive estimate, in section~\ref{S:proof-cor-1} we establish the separation result for the covariant Schr\"odinger operator, corollary~\ref{C:main-1}. In Section~\ref{More Coerc} we prove two additional coercive estimates and a domination property, all of which are crucially important for section~\ref{S:pf-thm-2}, where we establish the second semigroup generation result, theorem~\ref{T:main-2}. In section~\ref{SS:FK-proof-1} we prove theorem~\ref{T:main-3}, which features the mentioned Feynman--Kac formula for the semigroup corresponding to the maximal realization of $P^{\nabla}$ in $L^p_{\mu}(\vbn)$. Lastly, in the appendix~A we recall the basics of harmonic coordinates, which we use in our analysis in section~\ref{More Coerc}.

\section{Notations and Results}\label{S:res}
\subsection{Basic Notations}\label{SS:s-2-1} In this article we work on a smooth connected Riemannian $n$-manifold $(M,g)$ without boundary.  We use $d\nu_{g}$ to indicate the usual volume measure on $M$: in local coordinates $x^1,x^2,\dots, x^n$, we have $d\nu_{g}=\sqrt{\det (g^{ij})}\,dx$, where $(g^{ij})$ is the inverse of the matrix $g=(g_{ij})$ and $dx=dx^1\,dx^2\dots dx^n$ is the Lebesgue measure. For $x_0\in M$ and $\rho>0$ we define
\begin{equation}\label{E:ball-r}
B_{\rho}(x_0):=\{x\in M\colon r_{g}(x)<\rho\},
\end{equation}
where $r_{g}(x):=d_{g}(x_0,x)$ and $d_{g}$ is the distance function corresponding to $g$.

We denote the the tangent and cotangent bundles of $M$ by $TM$ and $T^*M$ respectively. Let $T^{k,l}(M)$ stand for the $(k,l)$-tensor bundle over $M$, with $k$ indicating the contravariant and $l$ the covariant index. The metric $g=(g_{ij})$ on $TM$ gives rise (in the usual way) to the Euclidean structure on $T^*M$, and this, in turn, leads to the Euclidean structure on the bundles $T^{k,l}(M)$.

The symbol $\vbn \to M$ stands for a smooth Hermitian vector bundle over $M$ equipped with a Hermitian structure $\langle\cdot, \cdot\rangle_{x}$, linear in the first and antilinear in the second variable. We use $|\cdot|_{x}$ to indicate the fiberwise norms on $\vbn_{x}$, usually omitting $x$ to simplify the notations. The notations $C^{\infty}(\vbn)$ and $\vcomp$ indicate smooth sections of $\vbn$ and smooth compactly supported sections of $\vbn$, respectively. For the corresponding spaces of complex-valued functions on $M$, we use the symbols $C^{\infty}(M)$ and $\mcomp$. Additionally, $C^1(M)$ indicates continuously differentiable (complex-valued) functions on $M$.

Furthermore, we denote the smooth (complex) one-forms on $M$ by $\Omega^1(M)$ and their compactly supported counterparts by $\Omega^1_{c}(M)$. 
With the Einstein summation convention understood, the notation $\langle \kappa, \omega\rangle_{1,x}$, $x\in M$, where $\kappa=\kappa_jdx^j$ and $\omega=\omega_jdx^j$, is interpreted as
\begin{equation}\label{E:form-inner-x}
\langle \kappa, \omega\rangle_{1,x}:=g^{jk}\kappa_j(x)\omega_k(x),
\end{equation}
where  $(g^{jk})$ is the inverse of the matrix $g=(g_{jk})$. To make the notation simpler, we usually omit the subscript ``$x$" from $\langle \kappa, \omega\rangle_{1,x}$.

We will also use basic ``musical" isomorphisms coming from $g$: for a vector field $Y$ on $M$, the symbol $Y^{\flat}$ indicates the one-form associated to $Y$, while $\omega^{\sharp}$ refers to the vector field associated to the one-form $\omega$.

\subsection{Measure $d\mu$} Using the volume measure $d\nu_{g}$ and a real-valued function $\phi\in C^{\infty}(M)$, we define the measure $d\mu$ on $M$ as follows:
\begin{equation}\label{E:meas-mu}
d\mu(x):=e^{-\phi(x)}d\nu_{g}(x).
\end{equation}

\subsection{Weighted $L^p$-spaces} For $1\leq p<\infty$, the notation $L_{\mu}^p(\vbn)$ refers to the space of $p$-integrable complex-valued sections of $\vbn$ with the norm
\begin{equation}\label{E:lp-norm}
\|u\|^{p}_{p,\mu}:=\int_{M}|u(x)|^p\,d\mu,
\end{equation}
where $\mu$ is as in~(\ref{E:meas-mu}) and $|\cdot|$ is the fiberwise norm in $\vbn_{x}$.

In the special case $d\mu=d\nu_{g}$, instead of $L_{\mu}^p(\vbn)$, we write $L^p(\vbn)$ and we denote the local versions of these spaces by $\lloc^{p}(\vbn)$. Lastly, the symbol $L_{\mu}^{\infty}(\vbn)$ stands for the space of essentially bounded sections of $\vbn$, and $\lloc^{\infty}(\vbn)$ indicates the corresponding local version.

In the case $p=2$ we get a Hilbert space $L_{\mu}^2(\vbn)$ with the inner product

\begin{equation}\label{E:inner-mu}
(u,v)_{\mu}=\int_{M}\langle u(x),v(x)\rangle\,d\mu.
\end{equation}
In the special case $d\mu=d\nu_{g}$, instead of writing  $(\cdot,\cdot)_{\nu_{g}}$ and $\|\cdot\|_{p,\nu_{g}}$, we write $(\cdot,\cdot)$ and $\|\cdot\|_{p}$ respectively.

For the $L^p_{\mu}$-spaces of functions on $M$ we use the same symbols as for the corresponding spaces of sections of $\vbn$; we just replace $\vbn$ by $M$, fiberwise norm by the absolute value, and $\langle u(x),v(x)\rangle$ by $u(x)\overline{v(x)}$, where $\overline{z}$ indicates the conjugate of a complex number $z$.

\subsection{Generalized Ornstein--Uhlenbeck Operator}\label{SS:ornst-uhl-def}
Having introduced the basic function spaces, we turn to differential operators.
The first item is the operator  $\nabla\colon C^{\infty}(\vbn)\to C^{\infty}(T^*M\otimes\vbn)$, a (smooth) metric covariant derivative on $\vbn$. The next item is $\nabla^{\dagger}\colon C^{\infty}(T^*M\otimes\vbn)\to C^{\infty}(\vbn)$, which stands for the formal adjoint of $\nabla$ with respect to $(\cdot,\cdot)$, the inner product in $L^2(\vbn)$. The so-called \emph{Bochner Laplacian} is defined as the composition $\nabla^{\dagger}\nabla$. Specializing to functions, we have the usual differential $d\colon C^{\infty}(M)\to \Omega^1(M)$ and its formal adjoint $d^{\dagger}\colon \Omega^1(M)\to C^{\infty}(M)$, understood with respect to the inner product $(\cdot,\cdot)$ in $L^2(M)$. The composition $d^{\dagger}d$, which we denote by $\Delta$, is known as \emph{the scalar Laplacian} on $M$. We note that in our article $\nabla^{\dagger}\nabla$ and $\Delta$ are non-negative operators. For a smooth vector field $Y$, we define the divergence of $Y$ as $\div Y:=-d^{\dagger}(Y^{\flat})$.

With the basic notations at hand, we are ready to describe the main object of this paper. Let $\phi$ be as in~(\ref{E:meas-mu}), let $X$ be a smooth \emph{real} vector field on $M$ and let $V\in L^{\infty}_{\loc}(\End \vbn)$ such that $V(x)\colon\vbn_x\to\vbn_x$ is a self-adjoint operator for all $x\in M$. We consider the expression
\begin{equation}\label{E:def-H}
P^{\nabla}u:=\nabla^{\dagger}\nabla u+\nabla_{(d\phi)^{\sharp}}u-\nabla_{X}u+Vu.
\end{equation}
We call the operator $P^{\nabla}$ \emph{the generalized Ornstein--Uhlenbeck operator (with a potential $V$)}. Following the terminology of~\cite{Gue-book}, in the case $X\equiv 0$ and $\phi\equiv 0$ the operator $P^{\nabla}$ reduces to the so-called \emph{covariant Schr\"odinger operator}. The usual Ornstein--Uhlenbeck operator on $\mathbb{R}^n$ is a special case of $P$ in~(\ref{E:intro-1}) with $\phi\equiv 0$, $V\equiv 0$, and $X(x)=-x$, $x\in\mathbb{R}^n$.

\subsection{Minimal and Maximal Realization}\label{SS:min-max-rel-1}
We now explain two simple ways in which the expression~(\ref{E:def-H}) can be realized as an operator in $L^p_{\mu}(\vbn)$,  $1<p<\infty$. We define $\hvminp u:= P^{\nabla}u$ with $\dom(\hvminp)=\vcomp$ and $\hvmaxp u:=P^{\nabla}u$ with
\[
\dom(\hvmaxp):=\{u\in L^p_{\mu}(\vbn)\colon P^{\nabla}u\in L^p_{\mu}(\vbn)\},
\]
understanding the expressions $\nabla^{\dagger}\nabla u$, $\nabla_{X}u$, and $\nabla_{(d\phi)^{\sharp}}u$ in the sense of distributions.

\subsection{Hessian Operator}\label{SS:hess} We take a moment to review the definition of the Hessian operator, which plays an important role in various places in this paper. To start off, recall that the Levi--Civita connection $\nabla^{lc}$ on $M$ induces the Euclidean covariant derivative on $T^*M=T^{0,1}$, also denoted by $\nabla^{lc}$, as follows:
\begin{equation}\label{E:cov-lc}
(\nabla^{lc}_{Y_1}\omega)(Y_2):=Y_1(\omega(Y_2))-\omega(\nabla^{lc}_{Y_1}Y_2),
\end{equation}
for all smooth one-forms $\omega$ and smooth vector fields $Y_1$ and $Y_2$.

Denoting the smooth sections of the bundle $T^{k,l}(M)$ by $C^{\infty}(T^{k,l}(M))$ and using the covariant derivative $\nabla^{lc}$ in~(\ref{E:cov-lc}), we define the \emph{Hessian operator} $\hess\colon C^{\infty}(M)\to C^{\infty}(T^{0,2}(M))$ as
\begin{equation}\label{E:hess}
\hess u:=\nabla^{lc}du.
\end{equation}

\subsection{Assumptions on $V$, $X$, and $\phi$.}\label{assump_V_X_etc}
Here we collect various hypotheses on $V$, $X$, and $\phi$, remarking that some results will hold, as explained in the corresponding statements, under a subset of these hypotheses.

We assume that $V$ satisfies the following conditions:
\begin{enumerate}
  \item [(V1)] $V\in L^{\infty}_{\loc}(\End \vbn)$ and $V(x)\colon\vbn_x\to \vbn_x$ is self-adjoint for all $x\in M$;

  \item [(V2)] there exists a number $\zeta\geq 1$ and a non-negative function $h\in C^{1}(M)$ such that
  \begin{equation*}
 h(x)\leq V(x)\leq \zeta h(x),
  \end{equation*}
for all $x\in M$, where the inequalities are understood in the sense of quadratic forms on $\vbn_{x}$.
\end{enumerate}

Furthermore, we make the following assumptions on the function $\phi$ from~(\ref{E:meas-mu}), the vector field $X$, and the function $h$ mentioned in (V2):

\begin{enumerate}
  \item [(A1)] $\phi\in C^{\infty}(M)$ is a real-valued function and $X$ is a smooth real vector field on $M$;
  \item [(A2)] for every $\varepsilon>0$, there exists $C_{\varepsilon}>0$ such that $|(\hess \phi)(x)|\leq \varepsilon |d\phi(x)|^2+C_{\varepsilon}$, for all $x\in M$;
  \item [(A3)] there exist $\theta\in\mathbb{R}$, $\beta_1\in\mathbb{R}$ such that $(\div X)(x)-(X\phi)(x)+\theta h(x)\geq-\beta_1$, for all $x\in M$;
  \item [(A4)] there exist $\kappa\geq 0$ and $\beta_2\geq 0$ such that $|X(x)|\leq \kappa(|d\phi(x)|^2+h(x)+\beta_2)^{1/2}$, for all $x\in M$;
  \item [(A5)] there exist $\gamma> 0$ and $\beta_3\geq 0$ such that $|dh(x)|\leq \gamma(h(x))^{3/2}+\beta_3$, for all $x\in M$;
  \item [(A6)] the numbers $\theta$, $\gamma$, $\kappa$ and the number $p$ appearing in $L^{p}_{\mu}(M)$ satisfy the following inequality:
    \begin{equation}\label{E:ineq-hyp-main-1}
    \frac{\theta}{p}+(p-1)\gamma\left(\frac{\kappa}{p}+\frac{\gamma}{4}\right)<1.
    \end{equation}
    \end{enumerate}
Here, the notations $|d\phi(x)|$ and $|(\hess \phi)(x)|$ indicate the fiberwise norms with respect to the Euclidean structures on the bundles $T^*M$ and $T^{0,2}(M)$ induced by $g$, as discussed in section~\ref{SS:s-2-1} above. Additionally, the symbol $\div X$ in (A3) refers to the divergence of $X$, and $X\phi$ indicates the action of $X$ on $\phi$, that is, $X\phi=(d\phi)(X)$.

\subsection{Statement of the First Semigroup Generation Result}\label{SS:state-res-1}
For future reference, we record that $\overline{T}$ denotes the closure of a (closable) operator $T$ in $L_{\mu}^{p}(\vbn)$. We are now ready to state our first generation result.
\begin{theorem} \label{T:main-1} Assume that $M$ is a geodesically complete $n$-dimensional Riemannian manifold without boundary. Let $\vbn$ be a Hermitian vector bundle over $M$ equipped with a metric connection $\nabla$. Let $1<p<\infty$ and let $d\mu$ be as in~(\ref{E:meas-mu}).  Then, the following properties hold:
\begin{enumerate}
  \item [(i)] Assume that  $V$ satisfies (V1) and the first inequality in (V2). Additionally, assume that (A1)--(A4) are satisfied together with the condition $\theta<p$. Then, we have $\overline\hvminp=\hvmaxp$. Furthermore, $-\hvmaxp$ generates a quasi-contractive analytic  $C_0$-semigroup on $L_{\mu}^p(\vbn)$.
  \item [(ii)] Assume that  $V$ satisfies (V1) and (V2). Furthermore, assume that (A1)--(A6) are satisfied. Then, there exists a number $\lambda_1>0$ such that
  \begin{equation}\label{E:coercive-1}
[1-p^{-1}\theta-(p-1)\gamma(p^{-1}\kappa+4^{-1}\gamma)]\|hu\|_{p,\mu}\leq (1+p\kappa\gamma)\|(\hvmaxp +\lambda)u\|_{p,\mu},
\end{equation}
for all $u\in\dom (\hvmaxp)$ and $\lambda\geq\lambda_1$, where $\|\cdot\|_{\mu, p}$ is as in~(\ref{E:lp-norm}) and $h$, $\kappa$, $\theta$, $\gamma$  are as in (A1)--(A6).
\end{enumerate}
\end{theorem}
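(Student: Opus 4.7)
Part (i) is a gauge-transformation reduction to a prior result. The map $J_p v := e^{\phi/p} v$ is a linear isometry from $L^p(\vbn)$ onto $L^p_\mu(\vbn)$, and a direct computation with the product rules for $\nabla$ and $\nabla^\dagger$ yields
\begin{equation*}
e^{-\phi/p}\, P^\nabla\, e^{\phi/p} \;=\; \nabla^\dagger\nabla \,-\, \nabla_{\widetilde X} \,+\, \widetilde V,
\end{equation*}
where $\widetilde X := X - \tfrac{p-2}{p}(d\phi)^\sharp$ and $\widetilde V := V + \tfrac{1}{p}\Delta\phi + \tfrac{p-1}{p^2}|d\phi|^2 - \tfrac{1}{p}X\phi$. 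Using (A1)--(A4) together with the quasi-contractivity-tuning condition $\theta<p$, one verifies that $(\widetilde X, \widetilde V)$ meets the hypotheses of the covariant Schr\"odinger-with-drift generation theorem from \cite{Mil-19} in the unweighted $L^p(\vbn)$. Transporting the conclusion back via $J_p$ simultaneously produces $\overline{\hvminp} = \hvmaxp$ and the quasi-contractive analytic $C_0$-semigroup generation on $L^p_\mu(\vbn)$.

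Part (ii) requires the main technical work; I would adapt the refined integration-by-parts technique of \cite{So-Yo-13} to the bundle setting. By part (i) it suffices to establish (\ref{E:coercive-1}) for $u \in \vcomp$ and then extend to $\dom(\hvmaxp)$ by graph-norm density and Fatou's lemma; the extension simultaneously yields the separation property $hu \in L^p_\mu(\vbn)$ needed to make sense of the left-hand side. For $u \in \vcomp$, set $f := (\hvmaxp + \lambda)u$, introduce the regularization $|u|_\epsilon := (|u|^2 + \epsilon)^{1/2}$, and compute
\begin{equation*}
\RE \int_M \langle f, h^{p-1}|u|_\epsilon^{p-2} u\rangle \, d\mu
\end{equation*}
by expanding the four summands of $P^\nabla u$. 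The key identity is that weighted integration by parts gives
\begin{equation*}
\int_M \langle (\nabla^\dagger\nabla + \nabla_{(d\phi)^\sharp})u, s\rangle\, d\mu \;=\; \int_M \langle \nabla u, \nabla s\rangle\, d\mu,
\end{equation*}
so the explicit $(d\phi)^\sharp$-drift in $P^\nabla$ is precisely what is needed to convert the Bochner term into a symmetric weighted Dirichlet form. Expanding $\nabla(h^{p-1}|u|_\epsilon^{p-2}u)$ then produces a non-negative reserve $(p-1)\int h^{p-1}|u|_\epsilon^{p-2}|\nabla u|^2\, d\mu$ and a cross-term in $dh$. The $V$-term yields $\int \langle Vu,u\rangle h^{p-1}|u|_\epsilon^{p-2}\, d\mu \geq \|hu\|_{p,\mu}^p$ by (V2). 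The $X$-drift is integrated by parts to produce the factor $\div X - X\phi$, bounded below by $-\beta_1 + \theta h$ via (A3). After $\epsilon\to 0$, every remaining cross-term is bounded by the Young inequality $ab \leq \eta a^2 + b^2/(4\eta)$: (A4) absorbs $|X|$-factors against a combination of $\|hu\|_{p,\mu}$ and the reserve gradient, (A2) absorbs $|\hess\phi|$-factors appearing from iterated integration by parts, and (A5) handles $|dh|$-factors via $|dh| \leq \gamma h^{3/2} + \beta_3$.

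The main obstacle will be the precise constant bookkeeping: each Young inequality must be tuned so that its sharp constant contributes exactly the coefficients $p^{-1}\theta$, $(p-1)\gamma(p^{-1}\kappa + \gamma/4)$, and $1 + p\kappa\gamma$ appearing in (\ref{E:coercive-1}), and the residual constants $\beta_1, \beta_2, \beta_3$ must be tracked so that they can all be absorbed by taking $\lambda \geq \lambda_1$ sufficiently large. Hypothesis (A6) is precisely the condition guaranteeing that, after all absorptions, the leading coefficient in front of $\|hu\|_{p,\mu}^p$ on the left-hand side remains strictly positive; without (A6) the computation collapses. A pleasant feature is that no curvature assumption on $M$ enters anywhere: all cutoff-function manipulations require only distance-based Lipschitz cutoffs (available under geodesic completeness), because no Hessian of a cutoff function ever appears in the calculation.
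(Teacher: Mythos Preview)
Your Part (i) is exactly the paper's approach: conjugate by $e^{\phi/p}$, land in the unweighted $L^p(\vbn)$ with a covariant Schr\"odinger-plus-drift operator, verify that $(\widetilde X,\widetilde V)$ meets the hypotheses of \cite{Mil-19}, and transport back. Nothing to add there.

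Part (ii) has the right skeleton but a genuine structural gap. Your single pass with test section $h^{p-1}|u|_\epsilon^{p-2}u$ does yield the weighted Dirichlet identity and does produce, via (A3) and (A5), the constants $p^{-1}\theta$ and $(p-1)\gamma^2/4$. But when you integrate the $X$-drift by parts you pick up a term $p^{-1}(p-1)(Xh)h^{p-2}|u|^p$, and (A4) forces $|Xh|\le \kappa\gamma(|d\phi|^2+h+\beta_2)^{1/2}h^{3/2}$. The $|d\phi|$-piece of this, after Young and H\"older, leaves you with a residual $\| |d\phi|^2 u\|_{p,\mu}\cdot\|hu\|_{p,\mu}^{p-1}$ that you cannot absorb into $\|hu\|_{p,\mu}^p$ alone: $|d\phi|^2$ is in no way dominated by $h$. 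Your claim that ``(A4) absorbs $|X|$-factors against $\|hu\|_{p,\mu}$ and the reserve gradient'' is exactly where this breaks.

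What the paper actually does is a \emph{two-step} argument. Before touching $h$, it proves a separate coercive estimate (Lemma~\ref{L:lemma-u-hmax}) for the auxiliary weight $U_\epsilon:=4(|d\phi|^2+\epsilon^{-1}C_\epsilon)$, namely $\|U_{\epsilon_0}u\|_{p,\mu}\le \tfrac{8p^2}{p-1}\|(\hvmaxp+\lambda)u\|_{p,\mu}$. This is where (A2) genuinely enters: it guarantees $|dU_\epsilon|\le \epsilon U_\epsilon^{3/2}$, so $U_\epsilon$ itself satisfies an (A5)-type bound with arbitrarily small constant, and a parallel integration-by-parts (now with test section $U_\epsilon^{p-1}|u|^{p-2}u$ and a gauge change $v=e^{-\phi/p}u$) closes. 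Only after this lemma is in hand does the paper run your $h^{p-1}|u|^{p-2}u$ computation, feeding the $|d\phi|^2$ residual back into the lemma; the factor $1+p\kappa\gamma$ on the right of (\ref{E:coercive-1}) is precisely the footprint of this feedback. Your vague reference to ``$|\hess\phi|$-factors from iterated integration by parts'' is pointing at the right hypothesis but the wrong mechanism: $\hess\phi$ never appears in the $h$-computation at all; it appears only in the preliminary $U_\epsilon$-lemma. Your remark that no Hessian cut-offs are needed is correct and matches the paper.
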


\subsection{Separation Property for the Covariant Schr\"odinger Operator}\label{SS:sep-eg}
In this section we consider the expression~(\ref{E:def-H}) with $X=0$ and $\phi=0$, which makes $d\mu=d\nu_g$, the volume measure. In general, it is not true that for all $u\in\dom (\hvmaxp)$ we have $\nabla^{\dagger}\nabla u\in L^p(\vbn)$ and $Vu\in L^p(\vbn)$. (Here, $\hvmaxp$ is as in section~\ref{SS:min-max-rel-1} with $X=0$ and $\phi=0$.)

Adopting the term used in~\cite{Everitt-Giertz77}, we call the expression $\nabla^{\dagger}\nabla+V$ \emph{separated} in $L^p(\vbn)$ if the following condition is met: for all $u\in\dom (\hvmaxp)$, we have $Vu\in L^p(\vbn)$.

Assuming (V1)--(V2) and (A5), the geodesic completeness of $M$, and lower boundedness (by a constant) of the Ricci curvature of $M$, the separation property for $\nabla^{\dagger}\nabla+V$ in $L^p(\vbn)$, $1<p<\infty$, was established in~\cite{Mil-Sar-19}. The integration by parts procedure leading to the key coercive estimate in~\cite{Mil-Sar-19} required the existence of a sequence of (genuine) Laplacian cut-off functions (for a description of such sequence see section 3.1 in~\cite{Gue-book}), which is known to exist under the aforementioned assumption on the Ricci curvature or, more generally, as in~\cite{BS-18}, assuming that the Ricci curvature is bounded from below by a (possibly unbounded) non-positive function depending on the distance from a reference point.

In the following corollary, we establish the separation property of $\nabla^{\dagger}\nabla+V$ in $L^p(\vbn)$, $1<p<\infty$, assuming, in addition to (V1)--(V2) and (A5), just the geodesic completeness of $M$:

\begin{cor} \label{C:main-1} Let $M$, $\vbn$, $\nabla$ be as in theorem~\ref{T:main-1}, and let $1<p<\infty$.  Assume that $V$ satisfies the hypotheses (V1) and (V2). Furthermore, assume that the function $h$ (mentioned in (V2)) satisfies (A5) with $0<\gamma<2/\sqrt{p-1}$. Then, the expression $\nabla^{\dagger}\nabla+V$ is separated in $L^p(\vbn)$.
\end{cor}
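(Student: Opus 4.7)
\textbf{Proof plan for Corollary~\ref{C:main-1}.}

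The plan is to deduce the separation property directly from the coercive estimate~(\ref{E:coercive-1}) of theorem~\ref{T:main-1}(ii), specialized to the case $X=0$ and $\phi=0$. By hypothesis (V2), we have the pointwise bound $|V(x)u(x)|\leq \zeta\, h(x)|u(x)|$ for all $x\in M$, so it suffices to show that $hu\in L^p(\vbn)$ whenever $u\in\dom(\hvmaxp)$. Since $\phi\equiv 0$, the measure $d\mu$ reduces to $d\nu_g$ and $L^p_{\mu}(\vbn)=L^p(\vbn)$, so the operators $\hvmaxp$ considered here and in theorem~\ref{T:main-1} coincide.

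The key verification is that the hypotheses (A1)--(A6) of theorem~\ref{T:main-1}(ii) are satisfied with the specific choices $X\equiv 0$, $\phi\equiv 0$, $\theta=0$, $\kappa=0$, $\beta_1=\beta_2=0$. Namely: (A1) is immediate; (A2) becomes $0\leq C_{\varepsilon}$ since $d\phi=0$ and $\hess\phi=0$; (A3) reduces to $0\geq 0$ since $\div X=0$, $X\phi=0$, and we pick $\theta=0$; (A4) reduces to $0\leq \kappa(\cdots)^{1/2}$, valid with $\kappa=0$; (A5) is exactly the standing hypothesis of the corollary; and (A6) becomes
\[
\frac{0}{p}+(p-1)\gamma\Bigl(\frac{0}{p}+\frac{\gamma}{4}\Bigr)=\frac{(p-1)\gamma^2}{4}<1,
\]
which is equivalent to $\gamma<2/\sqrt{p-1}$, precisely the assumed bound on $\gamma$. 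Also $\theta=0<p$ is automatic, so the prerequisites of part~(i) hold as well, ensuring $\hvmaxp$ is a well-behaved operator.

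With these identifications, the estimate~(\ref{E:coercive-1}) collapses to
\[
\Bigl[1-\frac{(p-1)\gamma^2}{4}\Bigr]\|hu\|_p \leq \|(\hvmaxp+\lambda)u\|_p
\]
for all $u\in\dom(\hvmaxp)$ and all $\lambda\geq\lambda_1$. Since the coefficient on the left is strictly positive by the choice of $\gamma$, and since $(\hvmaxp+\lambda)u\in L^p(\vbn)$ for every $u\in\dom(\hvmaxp)$ (directly from the definition of $\dom(\hvmaxp)$), we conclude that $hu\in L^p(\vbn)$. Combining with the pointwise bound from (V2) gives $Vu\in L^p(\vbn)$, which is the separation property.

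Since the coercive estimate is quoted from theorem~\ref{T:main-1}(ii), the corollary is essentially a matter of parameter bookkeeping, and there is no genuine obstacle in the argument itself; the substantive work was already carried out in establishing theorem~\ref{T:main-1}(ii). The only point requiring care is checking that the vanishing of $X$ and $\phi$ permits simultaneously setting $\theta=0$ and $\kappa=0$ in (A3)--(A4), which is precisely what makes (A6) equivalent to the hypothesis $\gamma<2/\sqrt{p-1}$.
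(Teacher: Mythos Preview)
Your proof is correct and follows essentially the same approach as the paper: specialize theorem~\ref{T:main-1} to $X=0$, $\phi=0$, check that (A1)--(A6) hold with $\theta=\kappa=\beta_1=\beta_2=0$ (so that (A6) reduces exactly to $\gamma<2/\sqrt{p-1}$), and combine the resulting coercive estimate with the pointwise bound $|Vu|\leq \zeta h|u|$ from (V2). The paper's proof is slightly terser but the logic is identical.
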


In our second semigroup generation result, we consider the special case of $P^{\nabla}$ acting on functions:
\begin{equation}\label{E:def-H-d}
P^{d}u:=\Delta u+(d\phi)^{\sharp}u-Xu+Vu,
\end{equation}
where $\Delta$ is the (non-negative) scalar Laplacian, $\phi$ is as in~(\ref{E:meas-mu}), $X$ is a smooth vector field, $V$ is a \emph{real-valued} function of the class $C^1(M)$, the symbol $(d\phi)^{\sharp}$ stands for the vector field corresponding to $d\phi$ via $g$, and $(d\phi)^{\sharp}u$ and $X u$ indicate the actions of the vector-fields $(d\phi)^{\sharp}$ and $X$ on the function $u$.

To state the second semigroup generation result we need a suitable class of second-order weighted Sobolev spaces on $M$.

\subsection{Weighted Second-Order Sobolev Spaces on $M$}\label{SS:w-sob}

In the symbol $W^{k,p}_{\loc}(M)$, by which we denote the \emph{local Sobolev spaces} of functions on $M$, the numbers $k\in\{0,1,2,\dots\}$ and $1\leq p\leq \infty$ indicate the highest order of derivatives and the corresponding $L^{p}_{\loc}$-space, respectively. The space of compactly supported elements of $W^{k,p}_{\loc}(M)$ will be indicated by $W^{k,p}_{\comp}(M)$. For reference, we remark that for the corresponding local spaces of sections of $\vbn$ we use the same notations, with $M$ replaced by $\vbn$.

We now define a class of the second-order weighted Sobolev spaces (of functions) on $M$. For $1<p<\infty$ and $d\mu$ as in~(\ref{E:meas-mu}) we define
\begin{equation}\label{E:sob-mu}
W^{2,p}_{\mu}(M):=\{u\in W^{2,p}_{\loc}(M)\colon u\in L^p_{\mu},\, du\in L^p_{\mu},\,\textrm{ and } \hess u\in L^p_{\mu}\}.
\end{equation}
To make the notations simpler, we suppressed the bundle designations in the corresponding $L^{p}_{\mu}$ space. Looking at~(\ref{E:sob-mu}), $u$ is a function, $du$ is a section of $T^{0,1}(M)=T^*M$, and $\hess u$ is a section of $T^{0,2}(M)$. The $L^p_{\mu}$-norms of the last two items, are defined as in~(\ref{E:lp-norm}) with the absolute value $|\cdot|$ replaced by the fiberwise norm $|\cdot|$ coming from the Euclidean structure (as discussed in section~\ref{SS:s-2-1}) of the corresponding bundle $T^*M$ or $T^{0,2}(M)$.

\subsection{A Realization $H^{d}$ of the expression $P^{d}$}\label{SS:operator-H}
Here we describe another way to realize $P^{d}$ in~(\ref{E:def-H-d}) as an operator in $L^p_{\mu}(M)$, $1<p<\infty$.
Let $V\in C^1(M)$ be a real-valued function, let $\phi$ be as in~(\ref{E:meas-mu}), and let $X$ be a smooth real vector field on $M$. We define an operator $H^{d}$ in $L^p_{\mu}(M)$ as follows: $H^{d}u:=P^{d}u$, for all $u\in\dom(H^{d})$, where
\begin{equation}\label{dom-H}
\dom (H^{d})=\{u\in L^p_{\mu}(M)\cap W^{2,p}_{\loc}(M)\colon  P_{j}u\in L^p_{\mu}(M),\,\, j=1,\dots, 4\}
\end{equation}
and
\begin{equation*}
P_1u:=\Delta u,\,\, P_2u:=(d\phi)^{\sharp}u,\,\,  P_3u:=Xu,\,\, P_{4}u:=Vu.
\end{equation*}
Here, $(d\phi)^{\sharp}$ is the vector field corresponding to $d\phi$ via $g$.
\subsection{Statement of the Second Semigroup Generation Result}\label{SS:state-res-2}
Before stating the result, we indicate that $\ricc_{M}$ and $r_{\textrm{inj}}(M)$ stand for the Ricci curvature tensor of $M$ and the injectivity radius of $M$, respectively.

\begin{theorem}\label{T:main-2} Let $M$ be a Riemannian manifold without boundary. Assume that $r_{\textrm{inj}}(M)>0$ and $\|\ricc_{M}\|_{\infty}<\infty$. Let $1<p<\infty$ and let $d\mu$ be as in~(\ref{E:meas-mu}). Assume that $V$ is a real-valued function of class $C^1(M)$. Additionally, assume that the vector field $X$ and the function $\phi$ from the expression~(\ref{E:def-H-d}) satisfy the hypotheses (A1)--(A6), with $h$ replaced by $V$ in (A3)--(A5). Then, the following properties hold:
\begin{itemize}
  \item [(i)] The operator $-H^{d}$, with $H^{d}$ as in section~\ref{SS:operator-H}, generates a quasi-contractive analytic $C_0$-semigroup in $L_{\mu}^p(M)$;
  \item [(ii)] $\dom(H^{d})=\{u\in W^{2,p}_{\mu}(M)\colon Vu\in L_{\mu}^p(M)\}$, where $W^{2,p}_{\mu}(M)$ is as in~(\ref{E:sob-mu}).
\end{itemize}
\end{theorem}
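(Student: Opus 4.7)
The plan is to treat parts (i) and (ii) together by identifying $\dom(H^d)$ with the maximal realization coming from theorem~\ref{T:main-1}(i). Viewing $P^d$ as $P^{\nabla}$ on the trivial line bundle $M\times\CC$ and verifying (V1)--(V2) (e.g.\ with $h=V+c_0$ for a sufficiently large constant $c_0$), theorem~\ref{T:main-1}(i) applies and produces a quasi-contractive analytic $C_0$-semigroup generated by $-H^d_{p,\max}$ on $L^p_\mu(M)$, together with $H^d_{p,\max}=\overline{H^d_{p,\min}}$. Both parts of the present theorem then reduce to proving
\[
\dom(H^d)=\dom(H^d_{p,\max})=D,\qquad D:=\{u\in W^{2,p}_\mu(M):Vu\in L^p_\mu(M)\}.
\]

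The analytic core consists in establishing the pair of coercive estimates announced in the introduction,
\[
\|(d\phi)^{\sharp}u\|_{p,\mu}+\|\hess u\|_{p,\mu}\leq C\|(H^d+\lambda)u\|_{p,\mu}\qquad (u\in\mcomp,\ \lambda\geq\lambda_0).
\]
I would derive these by adapting the refined integration-by-parts argument of section~\ref{S:coercive-1} (with $V$ in the role of $h$) and inserting two extra ingredients. The first is the manifold domination estimate~(\ref{E:intro-4-a}) applied with $U=V+c_0$, which absorbs the difficult terms $(d\phi)^{\sharp}u$ and $Vu$ into $\vep\|\Delta u\|_{p}+\widetilde C_\vep\|Vu\|_{p}$. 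The second is the $L^p$-Calder\'on--Zygmund inequality~(CZ(p)), which promotes a $\|\Delta u\|_p$ bound to a $\|\hess u\|_p$ bound. Both~(\ref{E:intro-4-a}) and~(CZ(p)) rely decisively on the assumptions $r_{\textrm{inj}}(M)>0$ and $\|\ricc_M\|_\infty<\infty$: the former yields uniform harmonic coordinates, within which the Euclidean domination inequality of~\cite{MPR-05} transplants to $M$ and the local estimates may be assembled into a global one; the latter is a well-known consequence of the same two geometric conditions.

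Once the estimates are available on $\mcomp$, I would extend them to the candidate domain $D$ using the density of $\mcomp$ in $D$ with respect to the graph norm $u\mapsto \|u\|_{p,\mu}+\|du\|_{p,\mu}+\|\hess u\|_{p,\mu}+\|Vu\|_{p,\mu}$; under the present geometric hypotheses this density follows from the existence of weak Hessian cut-off functions (see remark~\ref{R:HYP-H}). For $u\in\dom(H^d_{p,\max})$, approximation by $\mcomp$ in the graph norm of $H^d_{p,\max}$ (possible by $H^d_{p,\max}=\overline{H^d_{p,\min}}$) together with the extended coercive estimates and (A4) gives $du,\hess u,Vu,(d\phi)^{\sharp}u,Xu\in L^p_\mu(M)$, establishing $\dom(H^d_{p,\max})\subseteq\dom(H^d)\subseteq D$. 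Conversely, for $u\in D$ one approximates in the graph norm of $D$ and reapplies the extended estimates to obtain $(d\phi)^{\sharp}u,Xu\in L^p_\mu(M)$, which places $u$ in $\dom(H^d)$ and, a fortiori, in $\dom(H^d_{p,\max})$.

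The principal obstacle is the manifold version of the domination estimate~(\ref{E:intro-4-a}): its local Euclidean incarnation is accessible via~\cite{MPR-05}, but gluing the local pieces into a global estimate on $M$ without Laplacian cut-off functions (unavailable under our mild curvature hypotheses) rests crucially on uniform harmonic coordinates, which in turn explains why \emph{both} $r_{\textrm{inj}}(M)>0$ and $\|\ricc_M\|_\infty<\infty$ are needed. A secondary hurdle is the density of $\mcomp$ in $D$, where the weak Hessian cut-offs take center stage.
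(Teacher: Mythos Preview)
Your overall architecture matches the paper: prove $\dom(H^d_{p,\max})=\dom(H^d)=D$ via coercive estimates on $\mcomp$, built from the domination inequality~(\ref{E:intro-4-a}) (globalized through harmonic coordinates) and~(CZ(p)), and transfer to general elements by density using weak Hessian cut-offs. The inclusions $\dom(H^d_{p,\max})\subseteq\dom(H^d)$ and $\dom(H^d)\subseteq D$ go through exactly as you outline.

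There is, however, a genuine gap in your treatment of the reverse inclusion $D\subseteq\dom(H^d)$. You propose to ``reapply the extended estimates'' to an approximating sequence $u_k\in\mcomp$ converging in the $D$-graph norm. But your coercive estimates have $\|(H^d+\lambda)u_k\|_{p,\mu}$ on the right-hand side, and $H^d u_k$ contains precisely the terms $(d\phi)^{\sharp}u_k$ and $Xu_k$ whose $L^p_\mu$-boundedness you are trying to establish; convergence in the $D$-norm gives no a~priori control on $\|(H^d+\lambda)u_k\|_{p,\mu}$, so the argument is circular. Invoking the domination estimate with $U=V+c_0$ does not help either: it yields control of $\|(V+c_0)^{1/2}dv\|_p$ in the \emph{unweighted} space, and passing to $L^p_\mu$ via $v=e^{-\phi/p}u$ forces you to expand $\Delta(e^{-\phi/p}u)$, which reintroduces an uncontrolled $\||d\phi|^2 u\|_{p,\mu}$ term.

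What the paper does instead is prove, as a separate ingredient, that the map $u\mapsto|d\phi|^2 u$ is bounded from $W^{2,p}_\mu(M)$ to $L^p_\mu(M)$ (lemma~\ref{main-2_bound_2}, resting on the first-order version lemma~\ref{main-2_bound_1}). This is obtained by an integration-by-parts argument exploiting (A2) alone, with no reference to $H^d$. With this in hand, one can bound $\|Qu\|_{p,\mu}$ (where $Q\asymp|d\phi|^2+V+1$) by $\|u\|_D$, then feed this into the domination estimate for $Q$ to obtain $\|Q^{1/2}|du|\|_{p,\mu}\leq C\|u\|_D$ on $\mcomp$; since $|d\phi|\leq Q^{1/2}$ and $|X|\leq\kappa Q^{1/2}$ by (A4), the desired bound $\|(d\phi)^{\sharp}u\|_{p,\mu}+\|Xu\|_{p,\mu}\leq C\|u\|_D$ follows and the inclusion $D\subseteq\dom(H^d)$ closes.
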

\begin{remark}\label{R:HYP-H}
In various places in sections~\ref{More Coerc} and~\ref{S:pf-thm-2}, which culminate in the proof of theorem~\ref{T:main-2}, we impose the following conditions on $M$:
\begin{itemize}
  \item [(H1)] $(M, g)$ is a complete Riemannian manifold.

\item [(H2)] $(M,g)$ satisfies the $L^p$-Calder\'on--Zygmund inequality for all $1<p<\infty$. (For a precise formulation of this condition, see the inequality (CZ(p)) in section~\ref{SS:c-z} below.)

\item [(H3)] $(M,g)$ admits a sequence of \emph{weak Hessian cut-off functions}. (The latter, in the terminology of~\cite{IRV-19}, means that there exists a sequence $\psi_k\in\mcomp$ such that (i) $0\leq \psi_k(x)\leq 1$ for all $k\in\ZZ_{+}$ and all $x\in M$; (ii) for every compact set $K\subset M$ there exists $k_0\in\ZZ_{+}$ such that for all $k\geq k_0$ we have $\psi_k|_{K}=1$; furthermore, there exist constants $C_1$, $C_2$ such that for all $k\in\ZZ_{+}$ we have the properties (iii) $\|d\psi_k\|_{\infty}\leq C_1$ and  (iv) $\|\hess \psi_k\|_{\infty}\leq C_2$.)
\end{itemize}
It turns out that the assumptions $\|\ricc_{M}\|_{\infty}<\infty$ and $r_{\textrm{inj}}(M)>0$ guarantee that $(M,g)$ satisfies (H1), (H2) and (H3). In particular, the fulfillment of the condition (H2) is ensured by theorem 4.11 in~\cite{GP-2015}. Furthermore (H3) is fulfilled because $\|\ricc_{M}\|_{\infty}<\infty$ and $r_{\textrm{inj}}(M)>0$ guarantee (see the discussion after theorem 7.3 in~\cite{Pigola-20}) that $M$ has a sequence of \emph{genuine Hessian cut-off functions}. The latter concept is more stringent than \emph{weak Hessian cut-off functions} in that the conditions (iii) and (iv) in (H3) are replaced, respectively,  by (iii') $\|d\psi_k\|_{\infty}\to 0$ as $k\to\infty$ and (iv') $\|\hess \psi_k\|_{\infty}\to 0$ as $k\to\infty$.

As the reader can see in lemma~\ref{main-2_density}, the condition (H3) ensures the density of $\mcomp$ in $W^{2,p}_{\mu}(M)$. Lastly, we mention that the assumptions $\|\ricc_{M}\|_{\infty}<\infty$ and $r_{\textrm{inj}}(M)>0$ play a decisive role in establishing the domination estimate of lemma~\ref{more_coer_est_3}, which we get by globalizing the local estimate~(\ref{E:eucl-est-1}) via harmonic coordinates (recalled in appendix~A), whose feasibility is based on the property~(\ref{E:har-rad-lbound}), which is satisfied (see theorem B.4 in~\cite{GP-2015}) if $\|\ricc_{M}\|_{\infty}<\infty$ and $r_{\textrm{inj}}(M)>0$.
\end{remark}
\subsection{Feynman--Kac Formula for Generalized Ornstein--Uhlenbeck Operators}\label{SS:FK-OU}
In this section $(M,g)$ is a geodesically complete Riemannian manifold. Furthermore, $\vbn$ is a Hermitian vector bundle over $M$ with Hermitian structure $\langle\cdot,\cdot\rangle_{\vbn_{x}}$ and a metric covariant derivative $\nabla$. We consider the expression $P^{\nabla}$ as in~(\ref{E:def-H}) with the following assumptions on $V$, $X$, $\phi$:

\begin{itemize}
  \item [(F1)] $V\in \lloc^{\infty}(\End\vbn)$ and $V\geq 0$;
  \item [(F2)] $\phi$ and $X$ satisfy the assumptions (A1) and (A2);
  \item [(F3)] $\phi$ and $X$ satisfy the assumptions (A3) and (A4) with $h$ replaced by $0$.
\end{itemize}

Let $1<p<\infty$ and let $\hvmaxp$ be as in section~\ref{SS:min-max-rel-1}. Then, by theorem~\ref{T:main-1} we have a quasi-contractive $C_0$-semigroup $(e^{-t\hvmaxp})_{t\geq 0}$ in $L_{\mu}^p(\vbn)$, where $\mu$ is as in~(\ref{E:meas-mu}). Similarly to case of the covariant Schr\"odinger operator $\nabla^{\dagger}\nabla+V$ in $L^2(\vbn)$ with a potential $0\leq V\in\lloc^2(\End\vbn)$ studied in~\cite{Gu-10}, the next theorem gives a Feynman--Kac representation of the semigroup $e^{-t\hvmaxp}$ under the assumptions (F1)--(F3) and the stochastic completeness assumption described in (SC) below.

We now pause to describe the probabilistic ingredients of the statement. For more details on the stochastic analysis set up described below, see chapters 2 and 3 as well as appendix C of~\cite{Gu-disert-10}.

We start from a filtered probability space $(\Omega,\mathcal{F},\mathcal{F}_{*},\mathbb{P})$, with a right-continuous filtration $\mathcal{F}_{*}$ and the (measure theoretically) complete pair $(\mathbb{P},\mathcal{F}_{t})$ for all $t\geq 0$. We assume that this probability space carries a Brownian motion $W$ on $\mathbb{R}^{l}$, where $l\geq \dim M=n$ is large enough so that there is an isometric embedding $M\subset \RR^{l}$. As we want the second-order component of the generator for the diffusion described below to have a coefficient $1$, we ``speed up" $W$ so that the covariation $[\cdot,\cdot]$ satisfies $d[W_{t}^{j},W_{t}^{k}]=2\delta_{jk}\,dt$, with $\delta_{jk}$ being the Kronecker delta. Additionally, we  assume that $\mathcal{F}_{*}=\mathcal{F}_{*}(W)$.

Let $A\colon M\times \RR^{l}\to TM$ be a morphism of vector bundles defined by specifying $A(x)\colon \RR^{l}\to T_{x}M$, $x\in M$, as the orthogonal projection of $\RR^{l}$ onto  $T_{x}M$. Furthermore, denoting by $(e_{j})_{j=1}^{l}$ the standard basis of $\RR^l$, we define the (smooth) vector fields $A_j(\cdot):=A(\cdot)e_j$.

In this context, we can construct a diffusion $Y_{\bullet}(x)\colon [0,\zeta(x))\times \Omega\to M$, starting at $x\in M$ and with lifetime $\zeta(x)$,  as the maximally defined solution of the (stochastic differential) equation
\begin{equation}\label{E:diff-start-0}
dY_{t}(x)=\sum_{j=1}^{l}A_{j}(Y_{t}(x))\underbar{d}W_{t}^{j}+Z(Y_{t}(x))dt,\quad Y_{0}(x)=x,
\end{equation}
where $\underbar{d}$ denotes the Stratonovich differential, and
\begin{equation}\label{E:Z-X-PHI}
Z:=X-(d\phi)^{\sharp}
\end{equation}
with $(d\phi)^{\sharp}$ indicating the vector field corresponding to $d\phi$ via $g$.

For the rest of this section we make an additional assumption on the manifold, which we use in~(\ref{E:FKI-1-b}) below to implement a martingale-type argument from~\cite{Gu-10}:

\begin{enumerate}

\item [(SC)] We assume that the manifold $M$ is stochastically complete with respect to the diffusion $Y_t(x)$,
that is, we assume that for all $x\in M$, the lifetime of $Y_t(x)$ is $\zeta(x)=\infty$.

\end{enumerate}

\begin{remark} By the property (B.1) in~\cite{Gu-10} we have $\displaystyle\sum_{j=1}^{l}A^2_j=-\Delta$, where $\Delta=d^{\dagger}d$ (in particular, our $\Delta$ is a non-negative operator). Therefore, keeping in mind our ``speeding" convention for $W_t$, the diffusion $Y_t$ is sometimes described in the literature as \emph{the diffusion to} $-\Delta+Z$, with $Z$ as in~(\ref{E:Z-X-PHI}).

In the remainder of this remark, we recall some known conditions under which the property (SC) is satisfied in the case $X\equiv 0$ (that is, the case of the diffusion $Y_t$ to the Witten Laplacian $-\Delta-(d\phi)^{\sharp}$, where $\Delta$ is the non-negative Laplacian on $M$). Denoting by $\mu(B_{\rho}(x))$ the volume of the ball $B_{\rho}(x)$ in~(\ref{E:ball-r}), with $\mu$ as in~(\ref{E:meas-mu}), we first recall the ``volume condition" for weighted manifolds from theorem 11.8 of the book~\cite{grigoryan}:  if $(M,g)$ is geodesically complete and if
\begin{equation}\label{E:vol-grig}
\int_{\rho_0}^{\infty}\frac{\rho\,d\rho}{\log(\mu(B_{\rho}(x)))}=\infty,
\end{equation}
for some $x\in M$ and some $\rho_0>0$, then the property (SC) is satisfied (where $Z$ in~(\ref{E:diff-start-0}) has the form $Z=-(d\phi)^{\sharp}$).

It turns out (see theorem 1.1 (and its remark) in~\cite{FY-Wang}) that the geodesic completeness of $(M,g)$ together with the hypothesis
\begin{equation}\label{E:alpha-cond-sc}
\ricc_{M}(W,W)+\hess\phi(W,W)-\alpha^{-1}\left[g(W,(d\phi)^{\sharp})\right]^2\geq C,
\end{equation}
for some $\alpha\geq 1$, for some $C\in\RR$, and for all $W\in TM$, guarantee the fulfillment of certain estimates of the diffusion kernel (with respect to $\mu$ in~(\ref{E:meas-mu})) corresponding to $-\Delta-(d\phi)^{\sharp}$, which in turn ensures that~(\ref{E:vol-grig}) is satisfied, thus granting the property (SC) with $Z=-(d\phi)^{\sharp}$.

Using an approach based on ``differentiating" the semigroup corresponding to $-\Delta-(d\phi)^{\sharp}$ and assuming the geodesic completeness of $(M,g)$ together with the (weaker than~(\ref{E:alpha-cond-sc})) condition
\begin{equation}\label{E:cond-sc-bakry}
\ricc_{M}(W,W)+\hess\phi(W,W)\geq C,
\end{equation}
for some $C\in\RR$ and for all $W\in TM$, the author of~\cite{Bakry-86} showed that $Y_t(x)$ in~(\ref{E:diff-start-0}) with $Z=-(d\phi)^{\sharp}$ has an infinite lifetime for all $x\in M$, that is, the property (SC) with $Z=-(d\phi)^{\sharp}$ is satisfied. Lastly, we remark that, to our knowledge, the fulfillment of~(\ref{E:cond-sc-bakry}) does not guarantee the property~(\ref{E:vol-grig}).
\end{remark}

Let $m:=\rank(\vbn)$, let $\mathscr{U}(m)$ be the space of unitary $m\times m$ (complex) matrices, and let $\pi\colon \mathscr{P}(\vbn)\to M$ the principal $\mathscr{U}(m)$-bundle of (unitary) frames in $\vbn$. Let $u\colon \Omega\to \mathscr{P}(\vbn)$ be a $\mathscr{F}_{0}$-measurable random variable such that $\pi(u)=x$. Keeping in mind our covariant derivative $\nabla$ on $\vbn$, we can describe the \emph{stochastic $\nabla$-horizontal lift} $U_t(u)\colon [0,\infty)\times \Omega\to \mathscr{P}(\vbn)$ \emph{of} $Y_t(x)$ \emph{from }$\pi(u)=x$, $\mathbb{P}$~a.s.~, as the maximally defined solution of (the Stratonovich) equation
\begin{equation}\label{E:stoch-diff-1}
dU_{t}(u)=\sum_{j=1}^{l}A^{*}_{j}(U_{t}(u))\underbar{d}W_{t}^{j}+Z^*(U_{t}(u))dt,\quad U_{0}(u)=u,
\end{equation}
where $A^{*}_{j}\in C^{\infty}(\mathscr{P}(\vbn), T\mathscr{P}(\vbn))$ and $Z^{*}\in C^{\infty}(\mathscr{P}(\vbn), T\mathscr{P}(\vbn))$ are $\nabla$-lifts of $A_j$ and $Z$ respectively. (We remark that it is known that the lifetime of $U_{t}(u)$ is the same as that of $Y_t(x)$, which equals $\infty$ according to our assumption (SC).)

We can now describe \emph{the stochastic parallel transport} $\slash\slash^{t}_{x}\colon \vbn_{x}\to \vbn_{Y_{t}(x)}$ \emph{corresponding to} $\nabla$ \emph{and} $Y_t(x)$ as
\begin{equation}\label{E:spt-1}
\slash\slash^{t}_{x}:=U_t\,u^{-1},\qquad \mathbb{P}\textrm{~a.s},\quad t\geq 0,
\end{equation}
where $u=U_0(x)$.

It is known that this definition does not depend on the choice of $u$ with $\pi(u)=x$. Furthermore, the map $\slash\slash^{t}_{x}$ is an isometry.

This brings us to the last probabilistic ingredient, the $\End \vbn_{x}$-valued process $\mathscr{V}^{x}_{t}$, defined as the unique pathwise solution to
\begin{equation}\label{E:def-script-v}
d\mathscr{V}^{x}_{t}=-\mathscr{V}^{x}_{t}(\slash\slash^{t,-1}_{x}V(Y_{t}(x)))\slash\slash_{t}^{x})\,dt, \qquad \mathscr{V}^{x}_{0}=I,
\end{equation}
where $\slash\slash^{t,-1}_{x}$ is the inverse of $\slash\slash_{t}^{x}$ and $I$ is the identity endomorphism.

We are ready to state the mentioned Feynman--Kac formula.

\begin{theorem}\label{T:main-3} Let $1<p<\infty$ and let $M$, $\vbn$, and $\nabla$ be as in theorem~\ref{T:main-1}. Assume that $M$ is geodesically complete. Additionally, assume that $V$, $X$, and $\phi$ satisfy the hypotheses (F1)--(F3). Moreover, assume that $M$ satisfies the stochastic completeness assumption (SC). Let $\hvmaxp$ be as in section~\ref{SS:min-max-rel-1} with $\mu$ as in~(\ref{E:meas-mu}), and let $Y_t(x)$ and $\mathscr{V}^{x}_{t}$ be as in~(\ref{E:diff-start-0}) and~(\ref{E:def-script-v}) respectively. Then,
\begin{equation}\label{E:F-K}
(e^{-t\hvmaxp}f)(x)=\mathbb{E}\left[\mathscr{V}^{x}_{t}\slash\slash^{t,-1}_{x}f(Y_{t}(x)) \right],
\end{equation}
for all $f\in L^p_{\mu}(\vbn)$, for all $t\geq 0$, a.e.~$x\in M$.
\end{theorem}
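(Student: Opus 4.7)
My plan is to define the candidate Feynman--Kac semigroup $(P_{t})_{t \ge 0}$ on $L^{p}_{\mu}(\vbn)$ by
\[
(P_{t}f)(x) := \mathbb{E}\bigl[\mathscr{V}^{x}_{t}\slash\slash^{t,-1}_{x}f(Y_{t}(x))\bigr],
\]
verify that $P_{t}$ is a bounded operator on $L^{p}_{\mu}(\vbn)$, derive an It\^o-type integral equation for $P_{t}f$ on the dense subspace $\vcomp$, and then identify Laplace transforms to conclude $P_{t} = e^{-t\hvmaxp}$. The scaffolding rests decisively on the core property $\overline{\hvminp} = \hvmaxp$ supplied by part (i) of theorem~\ref{T:main-1}, which is applicable because (F1)--(F3) subsume its hypotheses (with $h \equiv 0$ and $\theta = 0 < p$). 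The main obstacle I foresee is the It\^o computation on $\mathscr{P}(\vbn)$, which must cleanly match the drift $Z := X - (d\phi)^{\sharp}$ of~(\ref{E:diff-start-0}) with the $\nabla_{(d\phi)^{\sharp}} - \nabla_{X}$ portion of $P^{\nabla}$.

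For boundedness, (F1) together with unitarity of $\slash\slash^{t}_{x}$ implies via~(\ref{E:def-script-v}) that $\slash\slash^{t,-1}_{x}V(Y_{t}(x))\slash\slash^{t}_{x}$ is a non-negative self-adjoint endomorphism of $\vbn_{x}$, so $\|\mathscr{V}^{x}_{t}\|_{\mathrm{op}} \le 1$. Combined with the fiber-isometry of $\slash\slash^{t,-1}_{x}$, Jensen's inequality yields the pointwise estimate $|(P_{t}f)(x)|^{p} \le \mathbb{E}[|f(Y_{t}(x))|^{p}] = (T_{t}|f|^{p})(x)$, where $T_{t}$ denotes the scalar Markov semigroup of $Y_{t}$. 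Applying theorem~\ref{T:main-1}(i) to the trivial line bundle with $V \equiv 0$ shows that $-(\Delta + (d\phi)^{\sharp} - X)$ generates a quasi-contractive $C_{0}$-semigroup on $L^{p}_{\mu}(M)$; identifying this semigroup with $T_{t}$ through the agreement of generators on the core $\mcomp$, we obtain $\|P_{t}f\|_{p,\mu} \le Me^{\omega t}\|f\|_{p,\mu}$ for constants $M,\omega \ge 0$ independent of $f$.

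Next, fix $f \in \vcomp$ and examine the $\vbn_{x}$-valued process $N_{t} := \mathscr{V}^{x}_{t}\slash\slash^{t,-1}_{x}f(Y_{t}(x))$. Using the Bochner-type horizontal-lift identity $\sum_{j=1}^{l}(A^{*}_{j})^{2}\widetilde{f} = -\widetilde{\nabla^{\dagger}\nabla f}$ on $\mathscr{P}(\vbn)$, together with the drift $Z^{*}$ in~(\ref{E:stoch-diff-1}), It\^o's formula yields that $t \mapsto \slash\slash^{t,-1}_{x}f(Y_{t}(x))$ has finite-variation part $\slash\slash^{t,-1}_{x}[(-\nabla^{\dagger}\nabla f + \nabla_{Z}f)(Y_{t}(x))]\,dt$. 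The It\^o product rule combined with~(\ref{E:def-script-v}) then produces
\[
dN_{t} = dM_{t} - \mathscr{V}^{x}_{t}\slash\slash^{t,-1}_{x}(P^{\nabla}f)(Y_{t}(x))\,dt,
\]
where $M_{t}$ is a local martingale and the identity $P^{\nabla}f = \nabla^{\dagger}\nabla f - \nabla_{Z}f + Vf$ has been invoked. Compact support of $f$ bounds $|f|$ and $|\nabla f|$ uniformly; combined with $\|\mathscr{V}^{x}_{t}\|_{\mathrm{op}} \le 1$, the isometry of $\slash\slash^{t,-1}_{x}$, and the stochastic completeness (SC), this upgrades $M_{t}$ to a true martingale of mean zero. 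Taking expectations yields
\[
(P_{t}f)(x) = f(x) - \int_{0}^{t}(P_{s}(P^{\nabla}f))(x)\,ds, \qquad f \in \vcomp.
\]

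Finally, for $\lambda > \omega$ set $R_{\lambda}f := \int_{0}^{\infty}e^{-\lambda t}P_{t}f\,dt$, a bounded operator on $L^{p}_{\mu}(\vbn)$. Multiplying the displayed integral equation by $\lambda e^{-\lambda t}$, applying Fubini, and integrating in $t$ gives $R_{\lambda}(\lambda + \hvmaxp)f = f$ for every $f \in \vcomp$ (using $\hvmaxp|_{\vcomp} = P^{\nabla}$). The core property $\overline{\hvminp} = \hvmaxp$ from theorem~\ref{T:main-1}(i) now extends this identity, by graph-norm density, to all $g \in \dom(\hvmaxp)$; inserting $g := (\lambda + \hvmaxp)^{-1}f$ shows $R_{\lambda} = (\lambda + \hvmaxp)^{-1}$, which is also the Laplace transform of $t \mapsto e^{-t\hvmaxp}$. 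Uniqueness of the vector-valued Laplace transform then forces $P_{t}f = e^{-t\hvmaxp}f$ in $L^{p}_{\mu}(\vbn)$ for every $t \ge 0$, establishing~(\ref{E:F-K}) at a.e.~$x \in M$.
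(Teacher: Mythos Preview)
Your skeleton---It\^o/Stratonovich formula on $\mathscr{P}(\vbn)$ for $f\in\vcomp$, martingale argument using (SC), then identification via the core property from theorem~\ref{T:main-1}(i)---matches the paper's. You diverge in two respects. First, you treat $0\le V\in L^{\infty}_{\loc}(\End\vbn)$ in one pass, exploiting $V\ge 0\Rightarrow\|\mathscr{V}^{x}_{t}\|_{\mathrm{op}}\le 1$ to bound both $P_t$ and the martingale integrand; the paper instead proceeds in three approximation stages (continuous bounded $V$, then bounded $V$, then locally bounded $V$) via Kato--Trotter, following~\cite{Gu-10}. Your shortcut is legitimate: for $f\in\vcomp$ the section $Vf$ is bounded and compactly supported regardless of any continuity of $V$, so the It\^o product rule and the true-martingale upgrade go through directly. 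Second, you identify $P_t$ with $e^{-t\hvmaxp}$ by Laplace-transform inversion rather than by the ODE comparison (Engel--Nagel~II.1.3(ii)) the paper uses; both are standard.

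There is, however, a circularity in your $L^{p}_{\mu}$-boundedness step. You invoke theorem~\ref{T:main-1}(i) in the scalar case $V\equiv 0$ and then ``identify this semigroup with $T_{t}$ through the agreement of generators on the core $\mcomp$''. But that identification is exactly the scalar instance ($\vbn$ trivial, $V=0$) of the Feynman--Kac formula you are proving, so it cannot be quoted as input. Moreover, even granting it, your pointwise bound $|P_{t}f|^{p}\le T_{t}(|f|^{p})$ calls for an $L^{1}_{\mu}$-estimate on $T_{t}$, while theorem~\ref{T:main-1}(i) is stated only for $1<p<\infty$. The paper bypasses this by working directly with the transition density $\rho_{t}(x,y)$ of $Y_{t}$ with respect to $d\mu$ and applying H\"older's inequality together with (SC); you should either adopt that kernel argument or, alternatively, establish the $L^{1}_{\mu}$-quasi-contractivity of $T_{t}$ from (F3) (the formal computation $\int_{M}(Xg)\,d\mu=\int_{M}g\,(X\phi-\div X)\,d\mu\le\beta_{1}\int_{M}g\,d\mu$ for $g\ge 0$ gives the exponent $\beta_{1}$). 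Once boundedness is secured non-circularly, the rest of your argument stands.
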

\begin{remark} Recently, in the context of a general manifold (not necessarily geodesically or stochastically complete), the authors of~\cite{Thalmaier-17} established, among other things, a representation formula for the solutions $f_t(\cdot)=f(t,\cdot)$ of the heat equation
\begin{equation*}
\pa_{t}f_t=(-\nabla^{\dagger}\nabla+\nabla_{X}-V)f_t, \qquad f_0=f, \quad (t,x)\in[0,T]\times M,
\end{equation*}
where $X$ is a smooth real vector field, $V\in C^{\infty}(\End\vbn)$, $f\in C^{\infty}(\vbn)$, and the operators on the right hand side are applied with respect to the spatial variable. In corollary 1.7 of~\cite{Thalmaier-17} it was shown that the process
\begin{equation*}
N_t:=\mathscr{V}^{x}_{t}\slash\slash^{t,-1}_{x}f_{T-t}(Y_{t}(x))1_{\{t<\zeta(x)\}}, \quad t\in[0,T],
\end{equation*}
where $\zeta(x)$ is the lifetime of $Y_t(x)$, is a local martingale. Furthermore, under an additional hypothesis that $N_t$ is a martingale, the authors of~\cite{Thalmaier-17} obtained (in the case $\phi\equiv 0$) the formula like~(\ref{E:F-K}), with the mentioned solution $f_t$ replacing our semigroup on the left hand side.

The representation~(\ref{E:F-K}) provides a link between the path integral on the right hand side and the maximal realization $\hvmaxp$ of $P^{\nabla}$ in $L^p_{\mu}(\vbn)$, $1<p<\infty$. In this regard, the formula~(\ref{E:F-K}) is modeled after the result of~\cite{Gu-10} in the context of the operator $\nabla^{\dagger}\nabla+V$ in $L^2(\vbn)$ with $0\leq V\in L_{\loc}^2(\End\vbn)$.  We should point out that for the operator $\nabla^{\dagger}\nabla+V$ in $L^2(\vbn)$ whose potential $V\in L_{\loc}^1(\End\vbn)$ is not necessarily bounded from below (but satisfying certain Kato-class related conditions guaranteeing the lower semi-boundedness of the operator $\nabla^{\dagger}\nabla+V$ in $L^2(\vbn)$), the corresponding Feynman--Kac formula was established in~\cite{Gu-12}.

In the very recent paper~\cite{Boldt-Guneysu-20}, in the context of (not necessarily geodesically complete or stochastically complete) Riemannian manifolds, the authors proved a general Feynman--Kac formula for the semigroup corresponding to the appropriate sectorial realization of the expression $\nabla^{\dagger}\nabla +F$ in $L^2(\vbn)$, where $F\colon C^{\infty}(\vbn)\to C^{\infty}(\vbn)$ is an arbitrary differential operator of order $\leq 1$ with not necessarily ``real coefficients." According to proposition 2.6 in~\cite{Boldt-Guneysu-20}, one set of hypotheses on $F=\sigma_1(F)\nabla+F_{\nabla}$, where $\sigma_1(F)$ is the principal symbol of $F$ and $F_{\nabla}$ is a zero-order operator, ensuring that the form $s(u):=(\nabla^{\dagger}\nabla u +Fu,u)$ with domain $\dom(s)=\vcomp$ is sectorial in $L^2(\vbn)$ can be described as follows:
\begin{equation}\label{E:sectorial-BG}
|\RE (\sigma_1(F))|\in L^{\infty}(M), \quad \RE(F_{\nabla})\geq c,\quad |\IM(\sigma_1(F))|\in \mathcal{K}(M),
\end{equation}
where $c\in\RR$, the notation $\mathcal{K}(M)$ refers to the Kato class of functions on $M$, and $\RE A$, $\IM A$ stand for the real and imaginary parts of the endomorphism $A$ of $\vbn$. It turns out that, under these conditions, the form $s$ is closable, the closure $\overline{s}$ of $s$ is a closed sectorial form, and the (closed sectorial) operator $S$ associated (via an abstract fact) to $\overline{s}$  generates an analytic semigroup $e^{-zS}$ in $L^2(\vbn)$, where $z$ belongs to some sector of the complex plane (see chapter VI in~\cite{Kato80} for the theory of sectorial forms and associated $m$-sectorial operators in a Hilbert space).

The Feynman--Kac formula from~\cite{Boldt-Guneysu-20} reads:
\begin{equation}\label{E:FKI-BG}
(e^{-tS}f)(x)=\mathbb{E}[\mathscr{F}^{x}_{t}\slash\slash^{t,-1}_{x}f(B_{t}(x))1_{\{t<\zeta(x)\}}], \qquad x\in M,\, t>0,\, f\in L^2(\vbn),
\end{equation}
where $B_t(x)$, the usual Brownian motion on $M$ with lifetime $\zeta(x)$, is the unique solution to~(\ref{E:diff-start-0}) with $Z=0$, and $\mathscr{F}^{x}_{t}$ is the unique solution to the It\^o equation
\begin{equation}\nonumber
d\mathscr{F}^{x}_{t}=-\mathscr{F}^{x}_{t}\slash\slash^{t,-1}_{x}\left((\sigma_1(F))^{\flat}(dB_{t}(x))
+F_{\nabla}(B_{t}(x))\,dt\right)\slash\slash_{t}^{x}, \qquad \mathscr{F}^{x}_{0}=I,
\end{equation}
where $\slash\slash^{t,-1}_{x}$ is the inverse of the stochastic parallel transport $\slash\slash_{t}^{x}$ corresponding to $\nabla$ and $B_t(x)$, and $I$ is the identity endomorphism.

It turns out that the conditions~(\ref{E:sectorial-BG}) ensure that $\mathscr{F}^{x}_{t}$ is well-behaved in the sense that
\begin{equation*}
\displaystyle\sup_{x\in M}\mathbb{E}[1_{\{t<\zeta(x)\}}|\mathscr{F}^{x}_{t}|^2]<\infty,
\end{equation*}
for all $t>0$.

Going back to the expression~(\ref{E:def-H}) and assuming $p=2$, $\phi\equiv 0$, $|X|\in L^{\infty}(M)$, $V\in C^{\infty}(\End\vbn)$ and $V\geq c$, where $c\in\RR$, one can use the formula~(\ref{E:FKI-BG}) from~\cite{Boldt-Guneysu-20}, where $S$ is the above described sectorial realization of $\nabla^{\dagger}\nabla-\nabla_{X}+V$ in $L^2(\vbn)$. Note that in the conditions~(\ref{E:sectorial-BG}) from~\cite{Boldt-Guneysu-20}, no assumptions are made on $\div X$. Furthermore, $(M,g)$ is not assumed to be geodesically complete or stochastically complete.
\end{remark}

\section{Preliminaries}\label{S:prelim-1}
In this section we collect various product/chain rules, state the $L^p$-Calder\'on--Zygmund inequality, and record a few basic inequalities used in subsequent parts of the paper. We also state and prove a version of the integration by parts formula tailored to the expression $P^{\nabla}$ in~(\ref{E:def-H}).

In all relevant formulas below, $(M, g)$ is an $n$-dimensional Riemannian manifold with Riemannian metric $g$, and the operators $\nabla$,  $\nabla^{\dagger}$, $d$, $d^{\dagger}$, $\Delta$, $\nabla^{lc}$, and $\hess$ are as in section~\ref{S:res}.
\subsection{Product Rules}\label{SS:pr}
Let $f \in C^{\infty}(M)$, let $\omega$ be a one-form of class $W^{1,1}_{\loc}(\Lambda^1 T^*M)$, let $w \in W^{2,1}_{\loc}(M)$, let $u\in W^{2,1}_{\loc}(\vbn)$, and let $Z$ be a smooth vector field on $M$. Then
\begin{enumerate}
\item [(p1)] $d(fw)=(df)w+fdw$,
\item [(p2)] $d^{\dagger}(f \omega) = fd^{\dagger}\omega- \langle df,\omega\rangle_{1}$,
\item [(p3)] $d^{\dagger}(f dw) = f\Delta w - \langle df, dw\rangle_{1}$,
\item [(p4)] $\Delta (fw) = f\Delta w - 2\langle df, dw\rangle_{1} + w\Delta f$,
\item [(p5)] $\hess (fw) = f\hess w +2 df \otimes dw + w\hess f$,
\item [(p6)] $\nabla (fu)=f\nabla u+df\otimes u$,
\item [(p7)] $\nabla_{Z}(fu)=f \nabla_{Z}u+(Zf)u$,
\item [(p8)] $\nabla^{\dagger}(f\nabla u) = f \nabla^{\dagger}\nabla u - \nabla_{(df)^\#}u$,
\item [(p9)] $\nabla^{\dagger}\nabla(fu) = f\nabla^{\dagger}\nabla u - 2\nabla_{(df)^\#}u + u\Delta f$,
\end{enumerate}
where  $\langle \cdot,\cdot \rangle_{1}$ is as in~(\ref{E:form-inner-x}), $Zf$ indicates the action of $Z$ on $f$, and $(df)^{\#}$ stands for the vector field corresponding to $df$ via the metric $g$.

The formulas (p1), (p6) and (p7) are basic product rules for the indicated operators.
The rule (p2) can be obtained from the definition of the operator $d^{\dagger}$, while the rules (p3)--(p5) follow from (p1)--(p2) and the definitions $\Delta:=d^{\dagger}d$ and $\hess:=\nabla^{lc}d$. The rule (p8) follows by integration by parts and the formula
\begin{equation*}
(\nabla u, df\otimes v)=(\nabla_{(df)^\#} u,v), \qquad v\in\vcomp,
\end{equation*}
where $(\cdot,\cdot)$ is as in~(\ref{E:inner-mu}) with the usual volume measure $d\mu=d\nu_{g}$. The rule (p9) is a consequence of (p6) and (p8) and the following fact (for which we refer to the formula~(III.7) of~\cite{Gue-book}):
\begin{equation*}
\nabla^{\dagger}(\sigma\otimes z)= (d^{\dagger}\sigma)z-\nabla_{\sigma^\#}z,
\end{equation*}
where $z\in W^{1,2}_{\loc}(E)$ and $\sigma\in\Omega^1(M)$ is a one-form.

\subsection{Chain Rules and Laplacian--Hessian Inequality}\label{SS:cr}
Let $w \colon M \rightarrow \RR$ and $f \colon U \rightarrow \RR$ be smooth functions, where $U\subseteq\RR$ is an open set containing the range of $w$. Then,
\begin{enumerate}
\item [(c1)] $d(f\circ w)=f'(w)dw$,
\item [(c2)] $\Delta(f \circ w) = -f''(w)|dw|^2 +f'(w)\Delta w$,
\item [(c3)] $\hess(f \circ w) = f''(w)dw\otimes dw + f'(w)(\hess w)$,
\end{enumerate}
where $|\cdot|$ in the above formulas is understood, depending on the context, as the absolute value of a number or the fiberwise norm in $T^*M$ and $T^{0,2}(M)$, with the Euclidean structures induced by $g$.
For the formula (c1) see exercise 3.4 of \cite{grigoryan}.  For the formula (c2), see exercises 3.4 and 3.9 in \cite{grigoryan}, keeping in mind that $\Delta$ in our article is set up as a non-negative operator (in contrast to~\cite{grigoryan}, where $\Delta$ is non-positive). Formula (c3) follows from formula (c1) and the definition of the Hessian.

We will also use the inequality (see~(III.24) in~\cite{Gue-book})
\begin{equation}\label{E:lap-hess}
|\Delta w(x)|\leq \sqrt{n}|(\hess w)(x)|,
\end{equation}
for all $x\in M$, where $n=\dim M$.

\subsection{$L^p$-Calder\'on--Zygmund Inequality}\label{SS:c-z}
We say that $(M,g)$ satisfies the $L^p$-Calder\'on--Zygmund Inequality for some $1<p<\infty$, if there exists a constant $C>0$, such that
\begin{equation}\tag{CZ(p)}
\|\hess u\|_{p}\leq C(\|\Delta u\|_{p}+\|u\|_{p}),
\end{equation}
for all $u\in\mcomp$, where $\|\cdot\|_{p}$ is as in~(\ref{E:lp-norm}) with $\mu=\nu_{g}$, the volume measure.

The authors of~\cite{GP-2015} showed (see theorem 4.15 there) that $(M,g)$ satisfies (CZ(p)) for all $1<p<\infty$ if the Ricci curvature $\ricc_{M}$ is bounded and if the injectivity radius $r_{\textrm{inj}}(M)$ is positive. (The constant $C$ in (CZ(p)) depends on $n=\dim M$, $p$, $\|\ricc_{M}\|_{\infty}$, and $r_{\textrm{inj}}(M)$.) Another sufficient condition for (CZ(p)), as specified in theorem 5.18 in~\cite{Pigola-20}, requires the geodesic completeness of $M$, the boundedness of $\textrm{Sect}_{M}$, and the fulfillment of $p\in[2,\infty)\cap(n/2, \infty)$, where $n=\dim M$.  (Here, $\textrm{Sect}_{M}$ stands for the sectional curvature of $M$.)

It turns out in order for $(M,g)$ to satisfy the property (CZ(p)), some assumptions on the geometry of $M$ are needed, as shown in the papers~\cite{GP-2015, Li-20, V-20} where the authors constructed various examples of manifolds (with $\ricc_{M}$ unbounded from below) for which (CZ(p)) is  not satisfied. We should add that for a few years the researchers wondered (going as far back as remark 2.7 in~\cite{Gu-seq-14}) whether (CZ(p)) holds for all $1<p<\infty$ if one assumes just the geodesic completeness of $M$ and the condition $\ricc_{M}\geq -K$, where $K>0$ is some constant. Remarkably, the authors of~\cite{M-V-21} showed very recently (see Theorem B there) that for every $n\in\{2,3,\dots\}$ and $p>n$, there exists a complete, non-compact Riemannian manifold $(M, g)$ with $\dim M=n$ and $\textrm{Sect}_{M}>0$, such that (CZ(p)) fails.

\subsection{Basic Inequalities with Numbers}\label{SS:bin}
Let $a,b,c,t\in\RR$ and let $p>1$.  Then,
\begin{enumerate}
\item [(n1)] for all $\kappa_1>1$ and $\kappa_2>1$ such that $1/\kappa_1+1/\kappa_2=1$,
we have
\begin{equation}\nonumber
|ab|\leq \frac{|a|^{\kappa_1}}{\kappa_1}+\frac{|b|^{\kappa_2}}{\kappa_2};
\end{equation}
\item [(n2)] $\displaystyle |a+b+c|^{p}\leq 3^{p-1}(|a|^p+|b|^p+|c|^p)$;
\item [(n3)] $\displaystyle |a+b+c|^{1/p}\leq (|a|^{1/p}+|b|^{1/p}+|c|^{1/p})$;
\item [(n4)] $|ab+ct|\leq (a^2+c^2)^{1/2}(b^2+t^2)^{1/2}$.
\end{enumerate}

\subsection{Integration by Parts Formula for $P^{\nabla}$}\label{SS:ibp}
Let $(\cdot,\cdot)$ be as in~(\ref{E:inner-mu}) with the usual volume measure $d\mu=d\nu_{g}$.
\begin{lemma}\label{L:lemma-1} Let $1<p<\infty$, let $\phi\in C^{\infty}(M)$ be real-valued, let $X$ be a smooth (real) vector field, and let $V\in\lloc^{\infty}(\End \vbn)$ be a self-adjoint section. Assume that $w\in W^{1,1}_{\loc}(\vbn)$ and $u\in C^{\infty}(\vbn)$, where $w$ or $u$ has compact support. Then
\begin{equation}\label{E:ibp-1}
(\nabla^{\dagger}\nabla u+\nabla_{(d\phi)^{\sharp}}u,w)=(\nabla u, \nabla w)+(u, (\Delta\phi)w)-(u,\nabla_{(d\phi)^{\sharp}}w).
\end{equation}
Furthermore,
\begin{equation}\label{E:ibp-2}
(-\nabla_{X}u+Vu,w)=-q^{-1}(\nabla_{X}u,w)+p^{-1}(u, \nabla_{X}w)+((V+p^{-1}\div X)u,w),
\end{equation}
where $q$ is the H\"older conjugate of $p$, that is, $q^{-1}+p^{-1}=1$.
\end{lemma}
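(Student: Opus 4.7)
The plan is to derive both identities from a single master integration by parts formula along a real vector field. Since $\nabla$ is a metric covariant derivative, for any smooth real vector field $Y$ on $M$ and sufficiently regular sections $u, w$ of $\vbn$ one has the pointwise Leibniz rule $Y\langle u, w\rangle = \langle \nabla_Y u, w\rangle + \langle u, \nabla_Y w\rangle$. Integrating with respect to $d\nu_g$ and invoking the scalar divergence identity $\int_M Yf\, d\nu_g = -\int_M f\,(\div Y)\, d\nu_g$ (valid whenever $f$ has compact support, and which follows from the paper's definition $\div Y := -d^{\dagger}(Y^{\flat})$ together with the defining property of $d^{\dagger}$) yields the master identity
\begin{equation*}
(\nabla_Y u, w) + (u, \nabla_Y w) = -((\div Y) u, w).
\end{equation*}
Under the hypotheses $u\in C^{\infty}(\vbn)$, $w\in W^{1,1}_{\loc}(\vbn)$ with one of them compactly supported, the extension from the smooth compactly supported case proceeds by standard mollification of the Sobolev section combined with dominated convergence on the fixed compact support.

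For~(\ref{E:ibp-1}) I would apply the master identity with $Y = (d\phi)^{\sharp}$. Since $Y^{\flat} = d\phi$, the divergence computes as $\div((d\phi)^{\sharp}) = -d^{\dagger}(d\phi) = -\Delta\phi$, so the master identity rearranges to
\begin{equation*}
(\nabla_{(d\phi)^{\sharp}} u, w) = (u, (\Delta\phi) w) - (u, \nabla_{(d\phi)^{\sharp}} w).
\end{equation*}
Adding the formal adjoint relation $(\nabla^{\dagger}\nabla u, w) = (\nabla u, \nabla w)$, which holds under our regularity by approximating $w$ (or $u$) by elements of $\vcomp$ in $W^{1,1}_{\comp}$ and using that the products are supported in a fixed compact set where $\nabla u$ is smooth, produces~(\ref{E:ibp-1}).

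For~(\ref{E:ibp-2}) I would apply the master identity with $Y = X$ to get $-(\nabla_X u, w) = (u, \nabla_X w) + ((\div X) u, w)$. Then, using $1 = p^{-1} + q^{-1}$, I split $-(\nabla_X u, w) = -p^{-1}(\nabla_X u, w) - q^{-1}(\nabla_X u, w)$ and substitute the previous equation into the first occurrence, obtaining
\begin{equation*}
-(\nabla_X u, w) = -q^{-1}(\nabla_X u, w) + p^{-1}(u, \nabla_X w) + p^{-1}((\div X) u, w).
\end{equation*}
Adding $(Vu, w)$ to both sides (this is well-defined since $V\in L^{\infty}_{\loc}(\End \vbn)$ is self-adjoint and acts pointwise, while the product $Vu$ pairs against $w$ on a compact set) and combining the $V$ and $p^{-1}\div X$ terms yields~(\ref{E:ibp-2}). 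No genuine obstacle arises; the lemma is essentially bookkeeping on top of the metric product rule plus the scalar divergence theorem, with only the mild technical step of passing the master identity from $\vcomp$ to the stated Sobolev regularity via mollification.
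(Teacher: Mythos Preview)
Your proposal is correct and follows essentially the same approach as the paper: the master identity $(\nabla_Y u, w) + (u, \nabla_Y w) = -((\div Y)u, w)$ is exactly the formal adjoint formula $(\nabla_X)^{\dagger}w = -\nabla_X w - (\div X)w$ that the paper cites from Taylor, and both arguments then specialize $Y=(d\phi)^{\sharp}$ (using $\div((d\phi)^{\sharp})=-\Delta\phi$) for~(\ref{E:ibp-1}) and split $1=p^{-1}+q^{-1}$ for~(\ref{E:ibp-2}). The only cosmetic difference is that you derive the adjoint formula from the metric Leibniz rule and divergence theorem rather than citing it.
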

\begin{proof} We recall (see proposition 1.4 in appendix C of~\cite{Taylor}) that the formal adjoint $(\nabla_{X})^{\dagger}$ with respect to $(\cdot,\cdot)$ of the operator $\nabla_{X}$ is given by
\begin{equation}\label{E:adj-X}
(\nabla_{X})^{\dagger}w=-\nabla_{X}w-(\div X)w.
\end{equation}
The formula~(\ref{E:ibp-1}) follows right away if we use~(\ref{E:adj-X}) with $X=(d\phi)^{\sharp}$ and if we notice that
\begin{equation}\label{E:calc-diverg}
\div ((d\phi)^{\sharp})=-d^{\dagger}(((d\phi)^{\sharp})^{\flat})=-d^{\dagger}d\phi=-\Delta\phi.
\end{equation}

To prove the formula~(\ref{E:ibp-2}), we write
\begin{align}
&(-\nabla_{X}u+Vu,w)=-(q^{-1}+p^{-1})(\nabla_{X}u,w)+(Vu, w)\nonumber\\
&=-q^{-1}(\nabla_{X}u,w)-p^{-1}(u, (\nabla_{X})^{\dagger}w)+(Vu, w)\nonumber
\end{align}
and combine this with~(\ref{E:adj-X}).
\end{proof}

\section{Proof of Part (i) of Theorem~\ref{T:main-1}}\label{S:hmax-1}
We take moment to recall some abstract terminology.
\subsection{Accretive, $m$-Accretive, and $m$-Sectorial Operators}\label{SS:accretive}
We say that a linear operator $T$ on a Banach space $\mathscr{B}$ is \emph{accretive}
if
\[
\|(\xi+T)u\|_{\mathscr{B}}\geq \xi\|u\|_{\mathscr{B}},
\]
for all $\xi>0$ and all $u\in\dom(T)$.
We say that a (densely defined) operator $T$ on $\mathscr{B}$ is \emph{$m$-accretive} if it is accretive and $\xi+T$ is surjective for all $\xi>0$. It is known (see theorem II.3.15 in~\cite{engel-nagel}) that if $T$ is an $m$-accretive operator on $\mathscr{B}$, then its negative $-T$ generates a contractive strongly continuous (often labeled as $C_0$) semigroup on $\mathscr{B}$. Similarly, if $\lambda>0$ and $T+\lambda$ is $m$-accretive, then $-T$ generates a quasi-contractive $C_0$-semigroup on $\mathscr{B}$. Specializing to $\mathscr{B}=L^p_{\mu}(\vbn)$, $1<p<\infty$, with $\vbn$ as in section~\ref{SS:s-2-1} and $\mu$ as in~(\ref{E:meas-mu}), we say that $T$ is an \emph{$m$-sectorial operator of type $\mathscr{S}(c)$}, $c>0$, on $L^p_{\mu}(\vbn)$  if $T$ is $m$-accretive on $L^p_{\mu}(\vbn)$ and
\begin{equation}\nonumber
\{(T u,|u|^{p-2}u)_{\mu}\colon u\in\dom(T)\}\subset \mathscr{S}(c),
\end{equation}
where
\begin{equation}\label{E:sector-analyticity}
\mathscr{S}(c):=\{z\in \mathbb{C}\colon |\IM z|\leq c \RE z\}.
\end{equation}
By an abstract fact (see corollary 2.27 in~\cite{Sch-evol-20}), if $T$ is an $m$-sectorial operator on $L^p_{\mu}(\vbn)$, $1<p<\infty$, then $-T$ generates a contractive analytic $C_0$-semigroup on $L^p_{\mu}(\vbn)$. In relation to $c$ in~(\ref{E:sector-analyticity}), a more precise description of the ``angles" for the sectors of analyticity (respectively, contractiveness) of this semigroup is given in the referenced corollary. Similarly, if $\lambda>0$ and $T+\lambda$ is $m$-sectorial, then $-T$ generates a quasi-contractive analytic $C_0$-semigroup on $L^p_{\mu}(\vbn)$, $1<p<\infty$.

\subsection{Reduction to a Covariant Schr\"odinger Operator With a Drift}
Following the procedure from~\cite{So-Yo-13} we will conveniently transform the expression $P^{\nabla}$ in~(\ref{E:def-H}). Clearly, the statements in part (i) of theorem~\ref{T:main-1} hold in $L^p_{\mu}(\vbn)$ if and only if the same statements hold in $L^p(\vbn)$ (the usual $L^p$-space with the volume measure $\nu_{g}$) for the operator $S_{p,\max}$ defined as in section~\ref{SS:min-max-rel-1}, with $P^{\nabla}$ replaced by $S^{\nabla}:=e^{-\phi/p}P^{\nabla}e^{\phi/p}$ and with $L^p_{\mu}(\vbn)$ replaced by $L^p(\vbn)$.  As we will see in the sequel, the differential expression corresponding to $S_{p,\max}$ has the form
\begin{equation}\nonumber\\
S^{\nabla}v=\nabla^{\dagger}\nabla v+\nabla_{Y}v+Gv,
\end{equation}
for some smooth vector field $Y$ and some self-adjoint section $G\in L^{\infty}_{\loc}(\End\vbn)$. We will also see that for a sufficiently large number $\xi_0>0$ the operator $S_{p,\max}+\xi_0$ is covered by the following proposition:
\begin{prop} \label{P:prop-cov-1}
Let $M$ is a geodesically complete Riemannian manifold and let $\vbn$ be a Hermitian vector bundle over $M$ with a metric covariant derivative $\nabla$. Let $Z$ be a smooth vector field on $M$, let $Q\in \lloc^{\infty}(\End \vbn)$ be a self-adjoint section, and let $1<p<\infty$.
Let $T_{p,\max}$ and  $T_{p,\min}$ be as in section~\ref{SS:min-max-rel-1}, with $P^{\nabla}$ replaced by $\nabla^{\dagger}\nabla +\nabla_{Z}+Q$ and with $L^p_{\mu}(\vbn)$ replaced by $L^p(\vbn)$, that is, the $L^p$-space with the volume measure $\nu_{g}$. Assume that there exists a constant $\gamma_1>0$ and a function $0\leq f\in L^{\infty}_{\loc}(M)$ such that
\begin{itemize}
  \item [(i)] $|Z(x)|\leq \gamma_1(f(x))^{1/2}$, for all $x\in M$;
  \item [(ii)] $Q(x)-p^{-1}(\div Z)(x)\geq f(x)$, for all $x\in M$, where the inequality is interpreted in the sense of quadratic forms on ${\vbn}_{x}$.
\end{itemize}
Then, the following properties hold:
\begin{itemize}
  \item [(i)] $T_{p,\max}=\overline{T_{p,\min}}$;
  \item [(ii)] $T_{p,\max}$, as an operator in the space $L^p(\vbn)$, is $m$-sectorial of type $\mathscr{S}({c_{p,\gamma_1}})$,
where
\begin{equation}
\nonumber c_{p,\gamma_1}:=\left[2^{-1}(p-1)^{-1}(p-2)^2+2^{-1}\gamma_1^2\right]^{1/2};
\end{equation}
\item [(iii)] $-T_{p,\max}$ generates a contractive analytic $C_0$-semigroup on $L^p(\vbn)$.
\end{itemize}
\end{prop}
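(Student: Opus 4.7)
The plan is to exploit that this proposition concerns a covariant Schr\"odinger operator with a drift of precisely the form treated in~\cite{Mil-19}: hypotheses (i)--(ii) are the ``drift bound'' and ``sign of potential'' conditions appearing in that reference. I would sketch the three key steps of the direct argument: a sectorial estimate on $T_{p,\min}$, the identity $T_{p,\max}=\overline{T_{p,\min}}$ via cutoffs, and a duality argument to upgrade to $m$-sectoriality.

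First, for $u\in\vcomp$ I would test $T_{p,\min}u$ against the duality partner $w=|u|^{p-2}u$ (with a standard $\varepsilon$-regularization of $|u|$ to handle its zero set). The integration by parts formula~(\ref{E:ibp-1}) with $\phi\equiv 0$ gives $(\nabla^{\dagger}\nabla u,w)=(\nabla u,\nabla w)$, and a Kato-type expansion of $\nabla w$ via product rule (p6) yields a real part dominating $\int|u|^{p-2}|\nabla u|^2\,d\nu_g$, together with an imaginary remainder controlled by $|p-2|/(2\sqrt{p-1})$ times the real part. For the drift term, the pointwise identity $\RE\langle\nabla_Z u,u\rangle=\tfrac{1}{2}\nabla_Z(|u|^2)$ and one integration by parts produce $\RE(\nabla_Z u,w)=-p^{-1}\int(\div Z)|u|^p\,d\nu_g$, while Cauchy--Schwarz combined with hypothesis~(i) bounds the imaginary part by $\gamma_1\bigl(\int f|u|^p\,d\nu_g\bigr)^{1/2}\bigl(\int|u|^{p-2}|\nabla u|^2\,d\nu_g\bigr)^{1/2}$. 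Hypothesis~(ii) then forces $\RE((Q-p^{-1}\div Z)u,w)\geq\int f|u|^p\,d\nu_g$, and assembling these estimates via~(n1) delivers the sector condition $(T_{p,\min}u,w)\in\mathscr{S}(c_{p,\gamma_1})$ with the stated constant.

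Next, to establish $T_{p,\max}=\overline{T_{p,\min}}$ I would use the geodesic completeness of $M$ to produce smooth cutoffs $\chi_k\in\mcomp$ with $\chi_k\uparrow 1$ locally uniformly and $\|d\chi_k\|_\infty\to 0$, obtained by mollifying suitable truncations of the distance function (no curvature assumption is needed at this level). For $u\in\dom(T_{p,\max})$ one has $\chi_k u\in W^{2,p}_{\comp}(\vbn)$ by interior elliptic regularity, which can then be mollified into $\vcomp$. The commutator
\begin{equation*}
T(\chi_k u)-\chi_k Tu = -2\nabla_{(d\chi_k)^{\sharp}} u + (\Delta\chi_k)u + (\nabla_Z\chi_k)u,
\end{equation*}
derived from (p9) and (p7), is handled by the classical device of replacing $\chi_k$ with $\chi_k^2$ so that the $\Delta(\chi_k^2)$ terms are rewritten in terms of $|d\chi_k|^2$; the accretivity bound from step one then absorbs the remaining first-order terms. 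This gives $\chi_k u\to u$ in the graph norm of $T_{p,\max}$.

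The main obstacle is closing the loop: showing that $\overline{T_{p,\min}}$ is $m$-accretive, i.e.\ that $\xi+T_{p,\min}$ has dense range for every $\xi>0$. I would argue by duality: if $v\in L^q(\vbn)$ with $q=p/(p-1)$ satisfies $(\xi+\overline{T_{p,\min}})^{*}v=0$, then $v$ is a weak solution of the analogous equation for the formal adjoint expression $\nabla^{\dagger}\nabla-\nabla_{Z}-(\div Z)+Q$, whose structural hypotheses (i)--(ii) are preserved (with $Z\mapsto -Z$ and $f$ shifted by terms which, once $\xi$ is added, remain non-negative). Applying the step-one sectorial estimate on the dual side to mollified truncations of $v$ forces $v=0$. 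Combined with the sectorial estimate on $T_{p,\min}$, this proves $T_{p,\max}=\overline{T_{p,\min}}$ is $m$-sectorial of type $\mathscr{S}(c_{p,\gamma_1})$; conclusion~(iii) then follows from the abstract generation criterion (corollary~2.27 in~\cite{Sch-evol-20}) recalled in section~\ref{SS:accretive}.
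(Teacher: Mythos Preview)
The paper's own proof is simply a citation: parts (i) and (ii) are theorems~1 and~2 of~\cite{Mil-19}, and (iii) follows from (ii) via the abstract criterion in section~\ref{SS:accretive}. Your proposal instead reconstructs the direct argument behind that citation, and your steps one and three are sound. In fact the duality check in step three is cleaner than you suggest: since $1-q^{-1}=p^{-1}$, the formal adjoint $\nabla^{\dagger}\nabla+\nabla_{-Z}+(Q-\div Z)$ satisfies hypothesis~(ii) with the \emph{same} function $f$, no shift required.

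There is, however, a genuine gap in step two. Your claim that ``the $\Delta(\chi_k^2)$ terms are rewritten in terms of $|d\chi_k|^2$'' is false: by rule~(c2) one has $\Delta(\chi_k^2)=2\chi_k\Delta\chi_k-2|d\chi_k|^2$, and the term $2\chi_k\Delta\chi_k$ does not go away. On a geodesically complete manifold with no curvature hypothesis there is no control on $\Delta\chi_k$, so the commutator $T(\chi_k^2 u)-\chi_k^2 Tu$ cannot be shown to tend to zero in $L^p$, and the graph-norm convergence $\chi_k^2 u\to u$ fails as written. The device actually used in~\cite{Mil-19} (and in the Sobajima-type arguments this paper adapts elsewhere) is different: one inserts the cutoff into the \emph{test function} rather than into $u$, i.e.\ one computes $\RE(T_{p,\max}u,\chi_k^2|u|^{p-2}u)$ and integrates by parts only once on the Bochner term. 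Then $(\nabla^{\dagger}\nabla u,\chi_k^2 w)=(\nabla u,\nabla(\chi_k^2 w))$ produces only \emph{first} derivatives of $\chi_k$ (via (p6)), and letting $k\to\infty$ yields the sectorial inequality directly for every $u\in\dom(T_{p,\max})$. Combined with your step three, this gives $m$-sectoriality of $T_{p,\max}$, and the equality $T_{p,\max}=\overline{T_{p,\min}}$ then follows because an $m$-accretive operator admits no proper accretive extension.
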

\begin{proof}
For the proofs of parts (i) and (ii), we refer the reader to theorems 1 and 2 in~\cite{Mil-19}, respectively. Part (iii) is a consequence of part (ii) and the discussion in section~\ref{SS:accretive} above.
\end{proof}

\subsection{Proof of Part (i) of Theorem~\ref{T:main-1}}\label{SS:thm-1-2-resume}
We start by writing down the expression $e^{-\phi/p}P^{\nabla}e^{\phi/p}$. To keep track of the calculations more easily, we refer to the chain rules (c1) and (c2) to record
\begin{equation}\label{E:l-u-2}
d(e^{\phi/p})=e^{\phi/p}p^{-1}d\phi,\quad \Delta(e^{\phi/p})=-e^{\phi/p}p^{-2}|d\phi|^2+e^{\phi/p}p^{-1}\Delta\phi.
\end{equation}

Corresponding to each term of $P^{\nabla}$ in~(\ref{E:def-H}), we have
\begin{align}\label{E:l-u-2-a}
&\nabla^{\dagger}\nabla (e^{\phi/p}v)=e^{\phi/p}\nabla^{\dagger}\nabla v-2\nabla_{(d(e^{\phi/p}))^{\sharp}}v+(\Delta(e^{\phi/p}))v\nonumber\\
&=e^{\phi/p}\nabla^{\dagger}\nabla v-2p^{-1}e^{\phi/p}\nabla_{(d\phi)^{\sharp}}v+(-e^{\phi/p}p^{-2}|d\phi|^2+e^{\phi/p}p^{-1}\Delta\phi)v,
\end{align}
where we used (p9) and~(\ref{E:l-u-2});
\begin{align}\label{E:l-u-2-b}
&\nabla_{(d\phi)^{\sharp}}(e^{\phi/p}v)=e^{\phi/p}\nabla_{(d\phi)^{\sharp}}v+[(d\phi)^{\sharp}(e^{\phi/p})]v\nonumber\\
&=e^{\phi/p}\nabla_{(d\phi)^{\sharp}}v+p^{-1}e^{\phi/p}[(d\phi)^{\sharp}\phi]u=e^{\phi/p}[\nabla_{(d\phi)^{\sharp}}v+p^{-1}|d\phi|^2v]
\end{align}
where we used (p7) and~(\ref{E:l-u-2});
\begin{align}\label{E:l-u-2-c}
&\nabla_{X}(e^{\phi/p}v)=e^{\phi/p}\nabla_{X}v+[X(e^{\phi/p})]v\nonumber\\
&=e^{\phi/p}\nabla_{X}u+p^{-1}e^{\phi/p}(X\phi)v,
\end{align}
where we used (p7) and~(\ref{E:l-u-2}).

Combining~(\ref{E:l-u-2-a}),~(\ref{E:l-u-2-b}), and~(\ref{E:l-u-2-c}), we can write the action of $S^{\nabla}:=e^{-\phi/p}P^{\nabla}e^{\phi/p}$ on $v$ as
\begin{equation}\label{E:s-nabla}
S^{\nabla}v=\nabla^{\dagger}\nabla v+\nabla_{Y}v+Gv,
\end{equation}
where
\begin{equation}\nonumber
Y:=-X+(1-2p^{-1})(d\phi)^{\sharp},
\end{equation}
\begin{equation}\nonumber
G:=V+(p^{-2}(p-1)|d\phi|^2+p^{-1}\Delta\phi-p^{-1}X\phi)I,
\end{equation}
where $I_{x}\colon \vbn_{x}\to \vbn_{x}$ is the identity endomorphism.

It remains to see that for a sufficiently large number $\xi_0>0$ the expression $S^{\nabla}+\xi_0$ satisfies the hypotheses of proposition~\ref{P:prop-cov-1} with the following function playing the role of $f$:
\begin{equation}\label{E:def-funct-f}
f(x):=(1-p^{-1}\theta)h(x)+2^{-1}p^{-2}(p-1)|d\phi(x)|^2+1.
\end{equation}
First, we note that $f\in L^{\infty}_{\loc}(M)$ and, in view of our hypothesis $\theta<p$, we have $f\geq 0$.
Next, we use (A4) to get
\begin{align}\label{E:p-accr-1}
&|Y|\leq |X|+|1-2p^{-1}||d\phi|\leq \kappa(|d\phi(x)|^2+h(x)+\beta_2)^{1/2}+p^{-1}|p-2||d\phi|\nonumber\\
&\leq (\kappa^2+p^{-2}(p-2)^2)^{1/2}(2|d\phi|^2+h+\beta_2)^{1/2}\leq (\kappa^2+p^{-2}(p-2)^2)^{1/2}(\delta f)^{1/2},\nonumber
\end{align}
where in the third estimate we used the inequality (n4), and in the fourth estimate we used~(\ref{E:def-funct-f}) together with
\begin{equation*}
\delta:=\max\{p(p-\theta)^{-1}, 4p^2(p-1)^{-1}, \beta_2\}.
\end{equation*}
Thus, $Y$ satisfies the hypothesis (i) of proposition~\ref{P:prop-cov-1}.

By the definitions of $G$ and $Y$ and the formula~(\ref{E:calc-diverg}) we have
\begin{align}\label{E:p-accr-2}
&G-p^{-1}\div Y=V+p^{-1}(\div X-X\phi)+p^{-2}(p-1)|d\phi(x)|^2\nonumber\\
&+2p^{-2}(p-1)\Delta\phi.
\end{align}

Using the condition $V\geq h$ and (A3) we get
\begin{align}\label{E:p-accr-3}
&V+p^{-1}(\div X-X\phi)\geq h+p^{-1}(\div X-X\phi)\nonumber\\
&\geq -p^{-1}\beta_1+(1-p^{-1}\theta)h.
\end{align}

Furthermore,
\begin{align}\label{E:p-accr-4}
&p^{-2}(p-1)\left(|d\phi|^2+2\Delta\phi\right)\geq p^{-2}(p-1)\left(|d\phi|^2-2\sqrt{n}|\hess\phi|\right)\nonumber\\
&\geq p^{-2}(p-1)\left[|d\phi|^2-2\sqrt{n}(\vep|d\phi|^2+C_{\vep})\right],
\end{align}
where in the first inequality we used~(\ref{E:lap-hess}) and in the second inequality we used~(A2).

Combining~(\ref{E:p-accr-2}),~(\ref{E:p-accr-3}), and~(\ref{E:p-accr-4}) we obtain
\begin{align}\label{E:p-accr-5}
&G-p^{-1}\div Y\geq (1-p^{-1}\theta)h+p^{-2}(p-1)(1-2\vep\sqrt{n})|d\phi|^2\nonumber\\
&-p^{-1}\beta_1-2p^{-2}(p-1)C_{\vep}\sqrt{n}.
\end{align}

Looking at~(\ref{E:p-accr-5}), we see that imposing the condition  $\vep\leq 1/(4\sqrt{n})$ ensures that the coefficient of the term containing $|d\phi|^2$ is greater than or equal to that of the corresponding term in~(\ref{E:def-funct-f}). Therefore, choosing $0<\vep_1\leq 1/(4\sqrt{n})$ and putting
\begin{equation*}
\xi_0:=p^{-1}\beta_1+2p^{-2}(p-1)C_{\vep_1}\sqrt{n}+1,
\end{equation*}
we see that
\begin{equation*}
\xi_0+G-p^{-1}\div Y\geq f,
\end{equation*}
where $f$ is as in~(\ref{E:def-funct-f}).

Therefore, the condition (ii) of proposition~\ref{P:prop-cov-1} is satisfied with $Q:=\xi_0+G$.
Thus, the properties (i) and (ii) described in the conclusion of proposition~\ref{P:prop-cov-1} hold for the operator $S_{p,\max}+\xi_0$, where $S_{p,\max}$ is the maximal realization of~(\ref{E:s-nabla}) in $L^p(\vbn)$. Thus, $S_{p,\max}=\overline{S_{p,\min}}$, and, furthermore, $-S_{p,\max}$ generates a quasi-contractive analytic $C_0$-semigroup in $L^p(\vbn)$. As mentioned above, this allows us to infer that the same properties hold for $\hvmaxp$ in the space $L^p_{\mu}(\vbn)$. This concludes the proof of part (i) of theorem~\ref{T:main-1}. $\hfill\square$

\section{Proof of part (ii) of Theorem~\ref{T:main-1}}\label{S:coercive-1}

In this section we  prove the coercive estimate in part (ii) of Theorem~\ref{T:main-1}.
We begin with a few preliminary lemmas.

\subsection{Preliminary Lemmas} \label{SS:coercive-1-prelim}
For the remainder of this section, $(M,g)$ is an $n$-dimensional Riemannian manifold without boundary (and without any other assumptions on the geometry of $M$, except for geodesic completeness, which we use in the last subsection). We assume that $V$ satisfies the assumptions (V1)--(V2) and the function $\phi$ and the vector field $X$ satisfy the assumptions (A1)--(A6) (or a certain subset of those assumptions, as specified in each lemma below).

We begin with a definition. Assume that $\phi\in C^{\infty}(M)$ satisfies (A2). For $\vep>0$ and $C_{\vep}$ as in (A2), define
\begin{equation}\label{E:uep}
U_{\vep}(x):=4(|d\phi(x)|^2+\vep^{-1}C_{\vep}).
\end{equation}

\begin{lemma}\label{L:lemma-aux-U} Let $U_{\vep}(x)$ be as in~(\ref{E:uep}), with $\phi$ satisfying (A2). Then, for all $x\in M$, we have
\begin{equation}\label{E:grad-uep}
|dU_{\vep}(x)|\leq \vep(U_{\vep}(x))^{3/2}.
\end{equation}
\end{lemma}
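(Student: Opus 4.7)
The plan is to reduce the estimate to a pointwise calculation driven by the hypothesis (A2). Since $U_\epsilon$ differs from $4|d\phi|^2$ only by a constant, I first observe that $dU_\epsilon = 4\, d(|d\phi|^2)$, so everything is governed by how the norm-squared of $d\phi$ differentiates.

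Next I would compute $d(|d\phi|^2)$ using compatibility of the Levi--Civita connection $\nabla^{lc}$ with the metric: for any smooth vector field $Y$,
\begin{equation*}
Y\bigl(|d\phi|^2\bigr) = Y\langle d\phi,d\phi\rangle_{1} = 2\langle \nabla^{lc}_{Y} d\phi, d\phi\rangle_{1} = 2\,(\hess\phi)(Y,(d\phi)^{\sharp}).
\end{equation*}
By Cauchy--Schwarz at the level of tensors this yields the pointwise estimate $|d(|d\phi|^2)|\leq 2|\hess\phi|\,|d\phi|$, and therefore
\begin{equation*}
|dU_\epsilon|\leq 8\,|\hess\phi|\,|d\phi|.
\end{equation*}

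Now I would invoke the hypothesis (A2), which says $|\hess\phi|\leq \epsilon |d\phi|^2+C_\epsilon$, to bound
\begin{equation*}
|dU_\epsilon|\leq 8\bigl(\epsilon |d\phi|^2+C_\epsilon\bigr)|d\phi| = 8\epsilon\,|d\phi|\bigl(|d\phi|^2+\epsilon^{-1}C_\epsilon\bigr).
\end{equation*}
The key simple inequality $|d\phi|\leq (|d\phi|^2+\epsilon^{-1}C_\epsilon)^{1/2}$ then gives
\begin{equation*}
|dU_\epsilon|\leq 8\epsilon\bigl(|d\phi|^2+\epsilon^{-1}C_\epsilon\bigr)^{3/2},
\end{equation*}
and since $U_\epsilon^{3/2} = 4^{3/2}\bigl(|d\phi|^2+\epsilon^{-1}C_\epsilon\bigr)^{3/2} = 8\bigl(|d\phi|^2+\epsilon^{-1}C_\epsilon\bigr)^{3/2}$, the desired conclusion $|dU_\epsilon|\leq \epsilon\, U_\epsilon^{3/2}$ follows.

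No real obstacles are expected; the only point requiring care is justifying the inequality $|d(|d\phi|^2)|\leq 2|\hess\phi|\,|d\phi|$ in a coordinate-free manner, which is exactly where metric compatibility of $\nabla^{lc}$ and the identification $(\hess\phi)(Y,\cdot)=\nabla^{lc}_{Y}d\phi$ are used. The appearance of the specific constant $4$ in the definition~(\ref{E:uep}) is precisely what makes the powers of $8$ cancel in the final step, so it is worth highlighting this as the reason for that choice.
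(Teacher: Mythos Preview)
Your proof is correct and follows essentially the same route as the paper's: both compute $|dU_\epsilon|=8|(\hess\phi)(\bullet,(d\phi)^\sharp)|\leq 8|\hess\phi||d\phi|$, apply (A2), and then use $2|d\phi|\leq U_\epsilon^{1/2}$ (equivalently $|d\phi|\leq(|d\phi|^2+\epsilon^{-1}C_\epsilon)^{1/2}$) to absorb the remaining factor. The only cosmetic difference is that the paper writes the intermediate step as $2\epsilon U_\epsilon|d\phi|$ rather than $8\epsilon(|d\phi|^2+\epsilon^{-1}C_\epsilon)|d\phi|$.
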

\begin{proof} Starting from~(\ref{E:uep}) and writing $|d\phi(x)|^2=\langle d\phi,d\phi\rangle$, we have
\begin{align}\label{E:similar-calc-8}
&|dU_{\vep}|=|8\langle {\nabla}^{lc}_{\bullet} d\phi,d\phi\rangle|=8|(\hess\phi)(\bullet, (d\phi)^{\sharp})|\nonumber\\
&\leq 8|\hess\phi||d\phi|\leq 8(\vep|d\phi|^2+C_{\vep})|d\phi|=2\vep U_{\vep}|d\phi|\leq \vep(U_{\vep})^{3/2},
\end{align}
where $(d\phi)^{\sharp}$ is the vector field corresponding to the form $d\phi$ via $g$, and in the last inequality we used
$2|d\phi|\leq U_{\vep}^{1/2}$, which follows from~(\ref{E:uep}).
\end{proof}

We now take a moment to describe a few more symbols. Given a section $u\in W^{1,1}_{\loc}(\vbn)\cap\lloc^{\infty}(\vbn)$, the notation $\nabla_{\bullet}u$ is understood as a $\vbn$-valued 1-form with the following property: the evaluation of this form at a smooth vector field $Y$ yields $\nabla_{Y}u$. With this property in mind, for $u\in W^{1,1}_{\loc}(\vbn)$ and $x\in M$, we define  $\omega_{u,x}\in T_{x}^*M$ and $\sigma_{u,x}\in T_{x}^*M$as
\begin{equation}\label{E:omega-x}
\omega_{u,x}:=\RE\langle\nabla_{\bullet}u,u\rangle_{\vbn_{x}},\quad \sigma_{u,x}:=\IM\langle\nabla_{\bullet}u,u\rangle_{\vbn_{x}}
\end{equation}
where $\langle\cdot,\cdot\rangle_{\vbn_{x}}$ is the fiberwise inner product in $\vbn_{x}$ with $x\in M$.

Thus, the assignments $x\mapsto \omega_{u,x}$ and $x\mapsto \sigma_{u,x}$ yield real-valued 1-forms $\omega_{u}$ and $\sigma_{u}$ on $M$ with $\lloc^1$-type regularity. To avoid overcomplicating our notations, we will simply use $\langle\cdot,\cdot\rangle$ and $|\cdot|$ as the inner product (respectively, the norm) in the fiber $\mathcal{W}_{x}$, with $\mathcal{W}_{x}$ referring to $\vbn_{x}$, $T_{x}^*M$, or $(T^*M\otimes \vbn)_{x}$.
We denote by $\chi_{\{u\neq 0\}}$ the indicator function of the set $\{x\in M\colon u(x)\neq 0\}$.

In the next lemma, proved in appendix A of~\cite{Mil-19}, we list some formulas which will help us organize our computations:

\begin{lemma}\label{L:om-sig} Assume that $u\in C^{\infty}(\vbn)$. Additionally, assume that $\xi\in\Omega^1(M)$ is a real-valued 1-form on $M$ and $Z$ is a smooth real vector field on $M$. Then, the following properties hold:
\begin{enumerate}
\item [(i)] $d|u|=\chi_{\{u\neq 0\}}|u|^{-1}\omega_{u}$.

\medskip

\item [(ii)] $Z|u|=\chi_{\{u\neq 0\}}|u|^{-1}\omega_{u}(Z)$, where $Z|u|$ indicates the action of $Z$ on the function $|u|$.

\medskip

\item [(iii)] $\langle \xi\otimes u, \nabla u\rangle= \langle u, \nabla_{{\xi}^{\sharp}}u\rangle$, where ${\xi}^{\sharp}$ is the vector field corresponding to the form $\xi$ via the metric $g$ of $M$.
\medskip

\item [(iv)] $\RE \langle \omega_{u}\otimes u, \nabla u\rangle=\langle \omega_{u},\omega_{u}\rangle =|\omega_{u}|^2$, for all $x\in M$.

\medskip

\item [(v)] $\RE \langle (\omega_{u}(Z)) u, \nabla_{Z} u\rangle=|\omega_{u}(Z)|^2$, for all $x\in M$. (Here, $|\cdot|$ on the right hand side is just the absolute value of a real number.)

\medskip




\end{enumerate}
\end{lemma}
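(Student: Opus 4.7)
My plan is to prove the five statements essentially in the order they are listed, since (ii) follows from (i) by testing on $Z$, and (iv) follows from (iii) by specializing $\xi$ to $\omega_u$. The main conceptual input is the compatibility of $\nabla$ with the Hermitian structure; the main technical subtlety is part (i), which requires care on the zero set of $u$.

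For part (i), the idea is to start from $|u|^2 = \langle u, u\rangle$ and use the fact that $\nabla$ is metric. Taking the differential of both sides, on the open set $\{u \neq 0\}$ we get $2|u|\,d|u| = d\langle u,u\rangle = \langle \nabla_\bullet u, u\rangle + \langle u, \nabla_\bullet u\rangle = 2\operatorname{Re}\langle \nabla_\bullet u, u\rangle = 2\omega_u$, hence $d|u| = |u|^{-1}\omega_u$ there. On the interior of $\{u = 0\}$ the formula is trivial. The only delicate point is the boundary of the zero set: since $|u|$ is Lipschitz (as $u$ is smooth) and vanishes on $\{u=0\}$, a standard result (the Stampacchia-type lemma that derivatives vanish a.e.\ on a level set) shows $d|u| = 0$ a.e.\ on $\{u=0\}$, which matches the claimed formula. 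I expect this bookkeeping on the zero set to be the only real obstacle; everything else is bundle-theoretic manipulation.

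Part (ii) is immediate from part (i) by evaluating both sides of the identity $d|u| = \chi_{\{u\neq 0\}}|u|^{-1}\omega_u$ on the vector field $Z$. For part (iii), the plan is to work in local coordinates. Writing $\nabla u = dx^k \otimes \nabla_{\partial_k} u$, the metric on $T^*M \otimes \mathcal{E}$ gives
\begin{equation*}
\langle \xi \otimes u, \nabla u\rangle = g^{jk}\xi_j \langle u, \nabla_{\partial_k} u\rangle = \langle u, \xi^k \nabla_{\partial_k} u\rangle = \langle u, \nabla_{\xi^\sharp} u\rangle,
\end{equation*}
where I use that $\xi$ is real so that no conjugate is needed on $\xi_j$, and that $\xi^\sharp = g^{jk}\xi_j \partial_k$ by definition of the musical isomorphism. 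This is a direct identification.

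For part (iv), I would apply part (iii) with $\xi = \omega_u$ to obtain $\langle \omega_u \otimes u, \nabla u\rangle = \langle u, \nabla_{\omega_u^\sharp} u\rangle$, take real parts, and use the fact that $\operatorname{Re}\langle u, \nabla_Y u\rangle = \operatorname{Re}\langle \nabla_Y u, u\rangle = \omega_u(Y)$ for any real vector field $Y$ (since these two Hermitian inner products are complex conjugates of each other). Specializing $Y = \omega_u^\sharp$ yields $\omega_u(\omega_u^\sharp) = \langle \omega_u, \omega_u\rangle_1 = |\omega_u|^2$. Part (v) is then the pointwise analogue: because $\omega_u(Z)$ is a real scalar, it can be pulled out of the Hermitian pairing, giving
\begin{equation*}
\operatorname{Re}\langle (\omega_u(Z)) u, \nabla_Z u\rangle = \omega_u(Z) \cdot \operatorname{Re}\langle u, \nabla_Z u\rangle = \omega_u(Z) \cdot \omega_u(Z) = |\omega_u(Z)|^2,
\end{equation*}
again using that $\operatorname{Re}\langle u, \nabla_Z u\rangle = \operatorname{Re}\langle \nabla_Z u, u\rangle = \omega_u(Z)$. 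Since the lemma is already stated in the literature (appendix A of \cite{Mil-19}), I would keep the write-up compact and simply reference that source if a detailed treatment of the zero-set issue in (i) is desired.
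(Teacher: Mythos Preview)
Your proof is correct and complete. The paper does not actually prove this lemma in-text; it simply refers the reader to appendix~A of \cite{Mil-19}, and your argument (differentiating $|u|^2=\langle u,u\rangle$ via metric compatibility, handling the zero set by the Stampacchia-type lemma, and deducing (ii)--(v) from (i) and (iii) by routine bundle manipulations) is the standard one that the citation is covering.
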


In the proof of the coercive estimate~(\ref{E:coercive-1}), the following lemma plays the central part.

\begin{lemma}\label{L:lemma-u-hmax} Let $1<p<\infty$ and let $U_{\vep}(x)$ be as in~(\ref{E:uep}). Assume that $V$ is a section of $\End\vbn$ satisfying the assumption (V1) and the first inequality in (V2). Furthermore, assume that the hypotheses (A1)--(A4) are satisfied, with $\theta<p$. Then there exist $\vep_{0}>0$ and $\lambda_{0}>0$ such that
\begin{equation}\label{E:est-u-hmax}
\|U_{\vep_0}u\|_{p,\mu}\leq \frac{8p^2}{p-1}\|(\hvmaxp+\lambda)u\|_{p,\mu},
\end{equation}
for all $u\in\vcomp$ and all $\lambda\geq\lambda_0$.
\end{lemma}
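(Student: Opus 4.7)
The plan is to reduce to the transformed operator $S^{\nabla}=e^{-\phi/p}P^{\nabla}e^{\phi/p}$ from Section~\ref{SS:thm-1-2-resume}, which acts in the unweighted space $L^{p}(\vbn)$. The advantage is that the zero-order part
\begin{equation*}
G=V+[p^{-2}(p-1)|d\phi|^{2}+p^{-1}\Delta\phi-p^{-1}X\phi]I
\end{equation*}
of $S^{\nabla}=\nabla^{\dagger}\nabla+\nabla_{Y}+G$ (with $Y=-X+(1-2p^{-1})(d\phi)^{\sharp}$) contains $|d\phi|^{2}$ explicitly, which is precisely the quantity we need to produce on the left-hand side so that $U_{\vep_{0}}=4(|d\phi|^{2}+\vep_{0}^{-1}C_{\vep_{0}})$ appears. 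Since $u=e^{\phi/p}v$ is an isometry sending $\|U_{\vep_{0}}u\|_{p,\mu}$ to $\|U_{\vep_{0}}v\|_{p}$ and $\|(\hvmaxp+\lambda)u\|_{p,\mu}$ to $\|(S^{\nabla}+\lambda)v\|_{p}$, it suffices to prove the estimate for $v\in\vcomp$ in the unweighted setting.

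The strategy is to test $(S^{\nabla}+\lambda)v$ against $F^{p-1}|v|^{p-2}v$ (with $F=U_{\vep_{0}}$) in the ordinary $L^{2}$-pairing, regularizing by $|v|_{\eta}=(|v|^{2}+\eta^{2})^{1/2}$ if needed and passing $\eta\to 0$ at the end. The upper bound by H\"older is immediate:
\begin{equation*}
|((S^{\nabla}+\lambda)v,F^{p-1}|v|^{p-2}v)|\leq \|(S^{\nabla}+\lambda)v\|_{p}\,\|Fv\|_{p}^{p-1}.
\end{equation*}
For the lower bound, integration by parts via Lemma~\ref{L:lemma-1} together with the product/chain rules of Sections~\ref{SS:pr}--\ref{SS:cr} decomposes $\RE((S^{\nabla}+\lambda)v,F^{p-1}|v|^{p-2}v)$ into: a non-negative gradient term $\int F^{p-1}|v|^{p-4}[(p-2)|\omega_{v}|^{2}+|v|^{2}|\nabla v|^{2}]\,d\nu_{g}$ (dominating $c_{p}\int F^{p-1}|v|^{p-2}|\nabla v|^{2}\,d\nu_{g}$ with $c_{p}=\min\{1,p-1\}$ via $|\omega_{v}|\leq |v||\nabla v|$); a $dF$-cross term $(p-1)\int F^{p-2}|v|^{p-2}\omega_{v}((dF)^{\sharp})\,d\nu_{g}$; a $YF$-cross term $-p^{-1}(p-1)\int F^{p-2}(YF)|v|^{p}\,d\nu_{g}$ arising from splitting $\nabla_{Y}=q^{-1}\nabla_{Y}+p^{-1}\nabla_{Y}$ and integrating the second piece by parts; and a zero-order contribution bounded below by $\int F^{p-1}|v|^{p}[h+p^{-2}(p-1)|d\phi|^{2}+p^{-1}\Delta\phi-p^{-1}X\phi-p^{-1}\div Y+\lambda]\,d\nu_{g}$. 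Combining $\div Y=-\div X-(1-2p^{-1})\Delta\phi$ with (A3) makes the net $h$-coefficient $\geq 1-p^{-1}\theta\geq 0$, while (A2) absorbs $\Delta\phi$ into $\vep|d\phi|^{2}+C_{\vep}$; for $\vep_{0}\leq 1/(4\sqrt{n})$ the net coefficient of $F^{p-1}|v|^{p}|d\phi|^{2}$ is $\geq \frac{p-1}{2p^{2}}$. Substituting $|d\phi|^{2}=F/4-\vep_{0}^{-1}C_{\vep_{0}}$ yields a principal term $\frac{p-1}{8p^{2}}\int F^{p}|v|^{p}\,d\nu_{g}$ plus lower-order remainders proportional to $\int F^{p-1}|v|^{p}\,d\nu_{g}$, which are dominated by $\lambda\int F^{p-1}|v|^{p}\,d\nu_{g}$ once $\lambda\geq\lambda_{0}$.

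The main obstacle is controlling the $dF$- and $YF$-cross terms without eating into $\frac{p-1}{8p^{2}}$. The key is Lemma~\ref{L:lemma-aux-U}, giving $|dF|\leq \vep_{0}F^{3/2}$: the $dF$-cross term is then bounded by $(p-1)\vep_{0}F^{p-1/2}|v|^{p-1}|\nabla v|$, and Young's inequality with $A=F^{(p-1)/2}|v|^{(p-2)/2}|\nabla v|$, $B=F^{p/2}|v|^{p/2}$ splits it as $\tfrac{(p-1)\vep_{0}}{2}[F^{p-1}|v|^{p-2}|\nabla v|^{2}+F^{p}|v|^{p}]$, whose first summand is absorbed by the gradient term and whose second by the principal coefficient once $\vep_{0}$ is small. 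For $|YF|$, combining (A4) with Lemma~\ref{L:lemma-aux-U} gives $|YF|\leq \kappa\vep_{0}F^{3/2}(|d\phi|^{2}+h+\beta_{2})^{1/2}+p^{-1}|p-2|\vep_{0}F^{2}/2$; using $(|d\phi|^{2}+h+\beta_{2})^{1/2}\leq F^{1/2}/2+h^{1/2}+\beta_{2}^{1/2}$ and further Young's inequalities, each resulting piece is absorbable (the $h^{1/2}$-piece into the non-negative $(1-p^{-1}\theta)\int hF^{p-1}|v|^{p}$ term, the $F^{1/2}$-piece by the principal coefficient, and the constant piece by $\lambda\int F^{p-1}|v|^{p}$). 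Choosing $\vep_{0}$ small enough to accommodate all absorptions (and $\vep_{0}\leq 1/(4\sqrt{n})$) and $\lambda_{0}$ large enough to dominate the resulting constants gives
\begin{equation*}
\tfrac{p-1}{8p^{2}}\|Fv\|_{p}^{p}\leq \|(S^{\nabla}+\lambda)v\|_{p}\|Fv\|_{p}^{p-1},
\end{equation*}
from which the claim follows by dividing by $\|Fv\|_{p}^{p-1}$ and translating back to $u$.
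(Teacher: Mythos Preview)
Your proposal is correct and follows essentially the same route as the paper: pair $(P^{\nabla}+\lambda)u$ against $U_{\vep}^{p-1}|u|^{p-2}u$ in $L^{p}_{\mu}$, integrate by parts, and use Lemma~\ref{L:lemma-aux-U} ($|dU_{\vep}|\leq\vep U_{\vep}^{3/2}$) together with Young's inequality and (A2)--(A4) to absorb all cross terms into a principal $\tfrac{p-1}{8p^{2}}\int U_{\vep_{0}}^{p}|u|^{p}\,d\mu$ plus lower-order $\int U_{\vep_{0}}^{p-1}|u|^{p}\,d\mu$ remainders. The only organizational differences are that you pass to $S^{\nabla}$ in $L^{p}(\vbn)$ at the outset (whereas the paper substitutes $v=e^{-\phi/p}u$ only inside the Bochner/$\nabla_{(d\phi)^{\sharp}}$ block and keeps the $-\nabla_{X}+V$ block in the weighted pairing), and that you absorb the $dF$-cross term into the full gradient term $c_{p}\int F^{p-1}|v|^{p-2}|\nabla v|^{2}$ rather than into the $(p-1)\int F^{p-1}|v|^{p-4}|\omega_{v}|^{2}$ piece as the paper does; both choices are legitimate and lead to the same constant.
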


\begin{proof}
We will prove the lemma in the case $p\geq 2$. At the end of the proof, we will make some remarks about the case $1<p<2$.

Our first goal is to estimate from below the term

\begin{equation}\label{E:l-u-1}
\RE (\nabla^{\dagger}\nabla u+\nabla_{(d\phi)^{\sharp}}u,U_{\vep}^{p-1}u|u|^{p-2})_{\mu},
\end{equation}
where $u\in\vcomp$ is arbitrary and $(\cdot,\cdot)_{\mu}$ is as in~(\ref{E:inner-mu}).

As the first step, we transform this expression using $v:=e^{-\phi/p}u$. One consequence of this transformation is
\begin{equation}\label{E:l-u-3}
U_{\vep}^{p-1}u|u|^{p-2}\,d\mu=U_{\vep}^{p-1}v|v|^{p-2}e^{-\phi/p}\,d\nu_{g},
\end{equation}
where $d\nu_{g}$ is the volume measure corresponding to $g$.

Before continuing further, we remind the reader that the symbol $(\cdot,\cdot)$ is as in~(\ref{E:inner-mu}) with the usual volume measure $d\mu=d\nu_{g}$.

Keeping in mind that $v\in\vcomp$ and $u=e^{\phi/p}v$ and referring to~(\ref{E:l-u-2-a}), we get
\begin{align}\label{E:l-u-4}
&(\nabla^{\dagger}\nabla u,U_{\vep}^{p-1}u|u|^{p-2})_{\mu}=(\nabla^{\dagger}\nabla v,U_{\vep}^{p-1}v|v|^{p-2})\nonumber\\
&-p^{-2}(v|d\phi|^2,U_{\vep}^{p-1}v|v|^{p-2})+p^{-1}(v\Delta \phi,U_{\vep}^{p-1}v|v|^{p-2})\nonumber\\
&-2p^{-1}(\nabla_{(d\phi)^{\sharp}}v,U_{\vep}^{p-1}v|v|^{p-2}).
\end{align}
For the second term of the expression~(\ref{E:l-u-1}), after looking at~(\ref{E:l-u-2-b}), we obtain
\begin{align}\label{E:l-u-5}
&(\nabla_{(d\phi)^{\sharp}}u,U_{\vep}^{p-1}u|u|^{p-2})_{\mu}=(\nabla_{(d\phi)^{\sharp}}v,U_{\vep}^{p-1}v|v|^{p-2})\nonumber\\
&+p^{-1}(v|d\phi|^2,U_{\vep}^{p-1}v|v|^{p-2}).
\end{align}
Combining~(\ref{E:l-u-4}) and~(\ref{E:l-u-5}) we get

\begin{align}\label{E:l-u-6}
&(\nabla^{\dagger}\nabla u+\nabla_{(d\phi)^{\sharp}}u,U_{\vep}^{p-1}u|u|^{p-2})_{\mu}=(\nabla^{\dagger}\nabla v,U_{\vep}^{p-1}v|v|^{p-2})\nonumber\\
&+(p^{-1}-p^{-2})(v|d\phi|^2,U_{\vep}^{p-1}v|v|^{p-2})+(1-2p^{-1})(\nabla_{(d\phi)^{\sharp}}v,U_{\vep}^{p-1}v|v|^{p-2})\nonumber\\
&+p^{-1}(v\Delta \phi,U_{\vep}^{p-1}v|v|^{p-2}).
\end{align}

The second step entails the integration by parts on the first and third term on the right hand side of~(\ref{E:l-u-6}). We will do this for the case $p\geq 2$; the case $1<p<2$ is handled similarly.

To start off, we use lemma~\ref{L:om-sig}(i) to get

\begin{equation}\label{E:l-u-7}
d (|v|^{p-2}v)=|v|^{p-2}\nabla v+(p-2)\chi_{\{v\neq 0\}}|v|^{p-4}v\otimes \omega_{v}
\end{equation}
where and $\omega_{v}$ and $\chi_{\{v\neq 0\}}$ are as in lemma~\ref{L:om-sig}.

As $v\in\vcomp$, the inequality
\begin{equation*}
|v|v|^{p-4}\omega_{v}|\leq |v|^{p-2}|\nabla v|
\end{equation*}
tells us that, in particular, $(|v|^{p-2}v)\in W^{1,1}_{\comp}(\vbn)$. As $U_{\vep}^{p-1}v\in C^{\infty}(\vbn)$, we have $(U_{\vep}^{p-1}|v|^{p-2}v)\in W^{1,1}_{\comp}(\vbn)$.

To get a clearer view of our calculations, we apply the formula~(\ref{E:ibp-1}) to each of the two designated terms separately.
The formulas
\begin{equation}\nonumber
dU_{\vep}^{p-2}=(p-1)U_{\vep}^{p-2}dU_{\vep}
\end{equation}
and~(\ref{E:l-u-7}) give, after using the usual product rule for $\nabla$,

\begin{align}\label{E:l-u-8}
&\nabla (U_{\vep}^{p-1}v|v|^{p-2})=U_{\vep}^{p-1}|v|^{p-2}\nabla v+(p-2)\chi_{\{v\neq 0\}}U_{\vep}^{p-1}|v|^{p-4}v\otimes\omega_{v}\nonumber\\
&+(p-1)U_{\vep}^{p-2}v|v|^{p-2}\otimes dU_{\vep},
\end{align}
which, after looking at~(\ref{E:ibp-1}), further leads to
\begin{align}\label{E:l-u-8-a}
&\RE (\nabla^{\dagger}\nabla v, U_{\vep}^{p-1}v|v|^{p-2})= \RE (\nabla v, U_{\vep}^{p-1}|v|^{p-2}\nabla v)\nonumber\\
&+(p-2)\RE (\nabla v, \chi_{\{v\neq 0\}}U_{\vep}^{p-1}|v|^{p-4}v\otimes\omega_{v})\nonumber\\
&+(p-1)\RE (\nabla v, U_{\vep}^{p-2}|v|^{p-2}v\otimes dU_{\vep}).
\end{align}

Using lemma~\ref{L:om-sig}(iv), we see that the second term on the right hand side of the last equality contains $|\omega_{v}|^2$, while
the first term contains $|\nabla v|^2|v|^{p-2}$. We pause to rewrite
\begin{align}\label{E:eq-1-est}
&|\nabla v|^2|v|^{p-2}=\chi_{\{v\neq 0\}}|v|^{p-4}|v|^2|\nabla v|^2\geq \chi_{\{v\neq 0\}}|v|^{p-4}|\langle \nabla_{\bullet}v, v\rangle|^2\nonumber\\
&= \chi_{\{v\neq 0\}}|v|^{p-4}(|\omega_{v}|^2+|\sigma_{v}|^2),
\end{align}
where the last inequality follows by the definition of $\omega_{v}$ and $\sigma_{v}$, yielding
\begin{align}\label{E:eq-1-est-1}
&\RE (\nabla v, U_{\vep}^{p-1}|v|^{p-2}\nabla v)\geq (\chi_{\{v\neq 0\}}\sigma_{v}, U_{\vep}^{p-1}|v|^{p-4}\sigma_{v})+\nonumber\\
&(\chi_{\{v\neq 0\}}\omega_{v}, U_{\vep}^{p-1}|v|^{p-4}\omega_{v}).
\end{align}
Taking into account~(\ref{E:eq-1-est-1}),
we have the following lower estimate for the first term on the right hand side of~(\ref{E:l-u-6}):
\begin{align}\label{E:l-u-9}
&\RE (\nabla^{\dagger}\nabla v, U_{\vep}^{p-1}v|v|^{p-2})\geq (p-1)(\chi_{\{v\neq 0\}}\omega_{v}, U_{\vep}^{p-1}|v|^{p-4}\omega_{v})\nonumber\\
&+(\chi_{\{v\neq 0\}}\sigma_{v}, U_{\vep}^{p-1}|v|^{p-4}\sigma_{v})+(p-1)(\omega_{v}, U_{\vep}^{p-2}|v|^{p-2}dU_{\vep}),
\end{align}
where we used the definition of $\omega_{v}$ to rewrite the terms with coefficients $p-1$ and $p-2$ on the right hand side of~(\ref{E:l-u-8-a}).

We now turn to the third term on the right hand side of~(\ref{E:l-u-6}). Looking at the part of~(\ref{E:ibp-1}) containing $\phi$ and referring to~(\ref{E:l-u-8}) we have
\begin{align}
&\RE (\nabla_{(d\phi)^{\sharp}}v,U_{\vep}^{p-1}v|v|^{p-2})=\RE (v,(\Delta\phi)U_{\vep}^{p-1}v|v|^{p-2})-\RE (v, \nabla_{(d\phi)^{\sharp}}(U_{\vep}^{p-1}v|v|^{p-2}))\nonumber\\
&=\RE (v,(\Delta\phi)U_{\vep}^{p-1}v|v|^{p-2})-(p-1)\RE (v, [(d\phi)^{\sharp}U_{\vep}]U_{\vep}^{p-2}v|v|^{p-2})\nonumber\\
&-\RE (v, U_{\vep}^{p-1}|v|^{p-2}\nabla_{(d\phi)^{\sharp}}v)-(p-2)\RE (v,\omega_{v}((d\phi)^{\sharp})\chi_{\{v\neq 0\}} U_{\vep}^{p-1}v|v|^{p-4}),\nonumber
\end{align}
where $[(d\phi)^{\sharp}U_{\vep}]$ indicates the action of the vector field $(d\phi)^{\sharp}$ on the function $U_{\vep}$.

Keeping in mind the definition of $\omega_{v}$, note that the item on the left hand side of the first equality and the last two items on the right hand side of the second equality are of the same type. Therefore, after rearranging we get
\begin{align}\label{E:l-u-11}
&\RE (\nabla_{(d\phi)^{\sharp}}v,U_{\vep}^{p-1}v|v|^{p-2})=p^{-1}\RE (v,(\Delta\phi)U_{\vep}^{p-1}v|v|^{p-2})\nonumber\\
&-(1-p^{-1})\RE (v, [(d\phi)^{\sharp}U_{\vep}]U_{\vep}^{p-2}v|v|^{p-2}).
\end{align}

Combining~(\ref{E:l-u-6}),~(\ref{E:l-u-9}), and ~(\ref{E:l-u-11}) we obtain
\begin{align}\label{E:l-u-12}
&\RE(\nabla^{\dagger}\nabla u+\nabla_{(d\phi)^{\sharp}}u,U_{\vep}^{p-1}u|u|^{p-2})_{\mu}\geq (p-1)(\chi_{\{v\neq 0\}}\omega_{v}, U_{\vep}^{p-1}|v|^{p-4}\omega_{v})\nonumber\\
&+(\chi_{\{v\neq 0\}}\sigma_{v}, U_{\vep}^{p-1}|v|^{p-4}\sigma_{v})+(p-1)(\omega_{v}, U_{\vep}^{p-2}|v|^{p-2}dU_{\vep})\nonumber\\
&+(p^{-1}-p^{-2})(v|d\phi|^2,U_{\vep}^{p-1}v|v|^{p-2})\nonumber\\
&-(1-2p^{-1})(1-p^{-1})(v, [(d\phi)^{\sharp}U_{\vep}]U_{\vep}^{p-2}v|v|^{p-2})\nonumber\\
&+2p^{-2}(p-1)(v,(\Delta\phi)U_{\vep}^{p-1}v|v|^{p-2}),
\end{align}
where we dropped the designation ``Re" from those terms $(\cdot,\cdot)$ that are real by default.

Having completed the integration by parts procedure, we now further estimate~(\ref{E:l-u-12}) from below. As the second term on the right hand side of~(\ref{E:l-u-12}) is non-negative, we drop it right away. For the third term, after estimating the integrand
\begin{equation}\nonumber
(p-1)|\langle\omega_{v}, U_{\vep}^{p-2}|v|^{p-2}dU_{\vep}\rangle|\leq (p-1)|\omega_{v}|U_{\vep}^{p-2}|dU_{\vep}||v|^{p-2},
\end{equation}
we use the inequality~(n1) with  $\kappa_1=\kappa_2=2$ and
\begin{equation}\nonumber
a=(2(p-1))^{1/2}U_{\vep}^{(p-1)/2}|\omega_{v}||v|^{p/2-2},\quad b=2^{-1/2}(p-1)^{1/2}U_{\vep}^{(p-3)/2}|dU_{\vep}||v|^{p/2}.
\end{equation}
As a result, we obtain the following lower estimate of~(\ref{E:l-u-12}):
\begin{align}\label{E:l-u-13}
&\RE(\nabla^{\dagger} u+\nabla_{(d\phi)^{\sharp}}u,U_{\vep}^{p-1}u|u|^{p-2})_{\mu}\geq
-\frac{p-1}{4}\int_{M}U_{\vep}^{p-3}|dU_{\vep}|^2|v|^{p}\,d\nu_{g}\nonumber\\
&+(p^{-1}-p^{-2})(v|d\phi|^2,U_{\vep}^{p-1}v|v|^{p-2})\nonumber\\
&-(1-2p^{-1})(1-p^{-1})(v, [(d\phi)^{\sharp}U_{\vep}]U_{\vep}^{p-2}v|v|^{p-2})\nonumber\\
&+2p^{-2}(p-1)(v,(\Delta\phi)U_{\vep}^{p-1}v|v|^{p-2}),
\end{align}
where the term corresponding to $a$ canceled with the first term on the right hand side of~(\ref{E:l-u-12}).

We now estimate from below the terms on the right hand side of~(\ref{E:l-u-13}), starting with the first one,
\begin{equation}\label{E:first-compos-est}
-\frac{p-1}{4}\int_{M}U_{\vep}^{p-3}|dU_{\vep}|^2|v|^{p}\,d\nu_{g}\geq -\frac{(p-1)\vep^2}{4}\int_{M}U_{\vep}^{p}|v|^{p}\,d\nu_{g}=-\frac{(p-1)\vep^2}{4}\int_{M}U_{\vep}^{p}|u|^{p}\,d\mu,
\end{equation}
where we used~(\ref{E:grad-uep}),~(\ref{E:meas-mu}), and the property $v=e^{-\phi/p}u$.

Before continuing with our lower estimate for the remaining terms on the right hand side of~(\ref{E:l-u-13}), note that
from the definition of $U_{\vep}$ we have
\begin{equation}\label{E:l-u-14}
|d\phi|\leq \frac{U_{\vep}^{1/2}}{2},\quad |d\phi|^2=\frac{U_{\vep}}{4}-\frac{C_{\vep}}{\vep}.
\end{equation}

Furthermore by the assumption (A2) and~(\ref{E:lap-hess}) we have
\begin{equation}\label{E:l-u-15}
|\Delta \phi|\leq \frac{\vep\sqrt{n}U_{\vep}}{4}.
\end{equation}

Using the equality from~(\ref{E:l-u-14}), we can rewrite the second term on the right hand side of~(\ref{E:l-u-13}) as
\begin{equation}\label{E:l-u-15-a}
\frac{p-1}{4p^2}\int_{M}U_{\vep}^{p}|u|^{p}\,d\mu-\frac{(p-1)C_{\vep}}{p^2\vep}\int_{M}U_{\vep}^{p-1}|u|^p\,d\mu,
\end{equation}
where we used the formula $v=e^{-\phi/p}u$ and~(\ref{E:meas-mu}).

Keeping in mind $v=e^{-\phi/p}u$ and~(\ref{E:meas-mu}) and using the inequality in~(\ref{E:l-u-14})
and the estimate~(\ref{E:grad-uep}), we get
\begin{align}\label{E:l-u-16}
&-(1-2p^{-1})(1-p^{-1})(v, [(d\phi)^{\sharp}U_{\vep}] U_{\vep}^{p-2}v|v|^{p-2})\nonumber\\
&\geq -\frac{(p-1)(p-2)\vep}{2p^2}\int_{M}U_{\vep}^{p}|u|^{p}\,d\mu,
\end{align}
and, finally, from~(\ref{E:l-u-15}) we obtain
\begin{align}\label{E:l-u-17}
&2p^{-2}(p-1)(v,(\Delta\phi)U_{\vep}^{p-1}v|v|^{p-2})\nonumber\\
&\geq -\frac{(p-1)\vep\sqrt{n}}{2p^2}\int_{M}U_{\vep}^{p}|u|^{p}\,d\mu.
\end{align}

Combining~(\ref{E:l-u-13}),~(\ref{E:first-compos-est}),~(\ref{E:l-u-15-a}),~(\ref{E:l-u-16}), and~(\ref{E:l-u-17}), we have
\begin{align}\label{E:l-u-18}
&\RE(\nabla^{\dagger}\nabla u+\nabla_{(d\phi)^{\sharp}}u,U_{\vep}^{p-1}u|u|^{p-2})_{\mu}\nonumber\\
&\geq \frac{p-1}{4p^2}(1-p^2\vep^2-2(p-2)\vep-2\vep\sqrt{n})\int_{M}U_{\vep}^{p}|u|^{p}\,d\mu\nonumber\\
&-\frac{(p-1)C_{\vep}}{p^2\vep}\int_{M}U_{\vep}^{p-1}|u|^p\,d\mu,
\end{align}
and this accomplishes the goal set at the beginning of the proof.

Our next goal is to estimate from below the term $\RE(\nabla_{X}u+Vu,U_{\vep}^{p-1}u|u|^{p-2})_{\mu}$. We will reach this goal after doing integration by parts, for which we refer to~(\ref{E:ibp-2}):
\begin{align}\label{E:l-u-19}
&\RE (-\nabla_{X}u, U_{\vep}^{p-1}u|u|^{p-2})_{\mu}=\RE (-\nabla_{X}u,U_{\vep}^{p-1}u|u|^{p-2}e^{-\phi})\nonumber\\
&=-q^{-1}\RE(\nabla_{X}u,U_{\vep}^{p-1}u|u|^{p-2}e^{-\phi})+p^{-1}\RE (u, X[U_{\vep}^{p-1}u|u|^{p-2}e^{-\phi}])\nonumber\\
&+p^{-1}(u,(\div X)U_{\vep}^{p-1}u|u|^{p-2}e^{-\phi}),
\end{align}
where $q$ is the H\"older conjugate of $p$.

Looking at~(\ref{E:l-u-8}) with $u$ in place of $v$ and keeping in mind $de^{-\phi}=-e^{-\phi}d\phi$ and the formula (ii) from lemma~\ref{L:om-sig}, we have
\begin{align}\label{E:l-u-20}
&p^{-1}\RE (u, X[U_{\vep}^{p-1}u|u|^{p-2}e^{-\phi}])=p^{-1}(p-1)\RE (u, (XU_{\vep})U_{\vep}^{p-2}u|u|^{p-2}e^{-\phi})\nonumber\\
&+p^{-1}\RE (u, (\nabla_{X}u)U_{\vep}^{p-1}|u|^{p-2}e^{-\phi})\nonumber\\
&+p^{-1}(p-2)(u, \chi_{\{v\neq 0\}}(\omega_{u}(X))U_{\vep}^{p-1}u|u|^{p-4}e^{-\phi})\nonumber\\
&-p^{-1}(u,(X\phi) U_{\vep}^{p-1}u|u|^{p-2}e^{-\phi}).
\end{align}
Remembering the definition~(\ref{E:omega-x}) and combining the second and the third term on the right hand side of~(\ref{E:l-u-20}), we obtain
\begin{equation}\nonumber
q^{-1}\RE (u, (\nabla_{X}u)U_{\vep}^{p-1}|u|^{p-2}e^{-\phi}),
\end{equation}
where $q^{-1}=1-p^{-1}$.

With the help of this information, looking at~(\ref{E:l-u-19}) and recalling the condition $V\geq h$, we get
\begin{align}\label{E:l-u-21}
&\RE(-\nabla_{X}u+Vu,U_{\vep}^{p-1}u|u|^{p-2})_{\mu}\geq p^{-1}(p-1)\RE (u, (XU_{\vep})U_{\vep}^{p-2}u|u|^{p-2})_{\mu}\nonumber\\
&+\int_{M}\left[p^{-1}(\div X-X\phi)+h\right]U_{\vep}^{p-1}|u|^p\,d\mu.
\end{align}

Before making a lower estimate, note that (A3) implies
\begin{equation}\label{E:aux-a3}
p^{-1}(\div X)-p^{-1}X\phi+h\geq (1-p^{-1}\theta)h-p^{-1}\beta_1.
\end{equation}

Furthermore from (A4) and~(\ref{E:grad-uep}) we get

\begin{equation}\label{E:aux-a4}
|(XU_{\vep})U_{\vep}^{p-2}|\leq |X||dU_{\vep}|U_{\vep}^{p-2}\leq \kappa\vep(|d\phi|^2+h+\beta_2)^{1/2}U_{\vep}^{p-1/2}.
\end{equation}

Furthermore,
\begin{align}\label{E:l-u-22}
&(|d\phi|^2+h+\beta_2)^{1/2}U_{\vep}^{p-1/2}|u|^p\leq (|d\phi|+(h+\beta_2)^{1/2})U_{\vep}^{p-1/2}|u|^p\nonumber\\
&\leq 2^{-1}U_{\vep}^{p}|u|^p+(h+\beta_2)^{1/2}U_{\vep}^{p-1/2}|u|^p\leq  2^{-1}U_{\vep}^{p}|u|^p\nonumber\\
&+(h+\beta_2)U_{\vep}^{p-1}|u|^p+4^{-1}U_{\vep}^{p}|u|^p,
\end{align}
where in the first inequality we used (n3) with $p=2$, $a=|d\phi|^2$, $b=h+\beta_2$ and $c=0$, in the second inequality we used~(\ref{E:l-u-14}), and in the third inequality we used~(n1) with $\kappa_1=\kappa_2=2$ and
\begin{equation*}
a=2^{1/2}(h+\beta_2)^{1/2}U_{\vep}^{(p-1)/2}|u|^{p/2},\quad b=2^{-1/2}U_{\vep}^{p/2}|u|^{p/2}.
\end{equation*}

Aided by~(\ref{E:aux-a4}),~(\ref{E:l-u-22}) and~(\ref{E:aux-a3}) we make a lower estimate in~(\ref{E:l-u-21}):
\begin{align}\label{E:l-u-23}
&\RE(-\nabla_{X}u+Vu,U_{\vep}^{p-1}u|u|^{p-2})_{\mu}\geq p^{-1}(p-\theta-(p-1)\kappa\vep)\int_{M}hU_{\vep}^{p-1}|u|^p\,d\mu\nonumber\\
&-p^{-1}(\beta_1+(p-1)\kappa\vep\beta_2)\int_{M}U_{\vep}^{p-1}|u|^p\,d\mu\nonumber\\
&-(4p)^{-1}3(p-1)\kappa\vep \int_{M}U_{\vep}^{p}|u|^p\,d\mu.
\end{align}

We now look at the coefficient of the integral with the integrand $hU_{\vep}^{p-1}|u|^p$ in~(\ref{E:l-u-23}) and the combined coefficient of the integrals with the integrand $U_{\vep}^{p}|u|^p$ in~(\ref{E:l-u-23}) and~(\ref{E:l-u-18}). Remembering the condition $\theta<p$, it is easy to see that we can choose a sufficiently small $\vep_0>0$ such that for all $0<\vep\leq\vep_0$ the following two conditions are satisfied:
\begin{equation}\nonumber
1-p^2\vep^2-2(p-2)\vep-2\vep\sqrt{n}-3p\kappa\vep\geq\frac{1}{2}, \quad p-\theta-(p-1)\kappa\vep>0.
\end{equation}

With this in mind, the estimates~(\ref{E:l-u-23}) and~(\ref{E:l-u-18}) yield
\begin{align}
&\RE(\hvmaxp u,U_{\vep}^{p-1}u|u|^{p-2})_{\mu}\geq \frac{p-1}{8p^2}\int_{M}U_{{\vep}_0}^{p}|u|^{p}\,d\mu\nonumber\\
&-\left(\frac{\beta_1}{p}+\frac{(p-1)\kappa\vep_0\beta_2}{p}+\frac{(p-1)C_{{\vep}_0}}{p^2{\vep}_0}\right)\int_{M}U_{\vep_0}^{p-1}|u|^p\,d\mu\nonumber.
\end{align}

Denoting by $\lambda_0$ the coefficient of the integral with integrand $U_{\vep_0}^{p-1}|u|^p$, we see that for all $\lambda\geq\lambda_0$ and for all $u\in\vcomp$ we have
\begin{align}\label{E:l-u-24}
&\frac{p-1}{8p^2}\|U_{{\vep}_0}u\|_{p,\mu}^p\leq \RE((\hvmaxp+\lambda)u,U_{{\vep}_0}^{p-1}u|u|^{p-2})_{\mu}\nonumber\\
&\leq \|(\hvmaxp+\lambda)u\|_{p,\mu}\|U_{{\vep}_0}u\|_{p,\mu}^{p-1},
\end{align}
where in the second estimate we used H\"older's inequality.

Rearranging~(\ref{E:l-u-24}) we obtain~(\ref{E:est-u-hmax}), which concludes the proof of the lemma for the case $p\geq 2$.

The case $1<p<2$ can be handled in a similar manner if instead of $u|u|^{p-2}$ we consider $(|u|^2+\tau)^{p/2-1}u$, with $\tau>0$.
For brevity we will not include the details here. Instead, we remark that in the context $1<p<2$ an argument with the same flavor was carried out in section 2 of~\cite{So-12} for the Schr\"odinger operator with a drift on $\RR^n$, that is, the case $\phi\equiv 0$ in~(\ref{E:def-H}); see also~\cite{Mil-19} for the covariant Schr\"odinger operator (with a drift) on manifolds.
\end{proof}

\subsection{Proof of Part (ii)  of Theorem~\ref{T:main-1}}\label{SS:coercive-1}

We will prove the result in the case $p\geq 2$. We just remark that the case $1<p<2$ can be handled by a similar procedure, as indicated at the end of the proof of lemma~\ref{L:lemma-u-hmax}.

If we replace $h$ with $h+c_1$, where $c_1>0$ is a number, we may assume that (A5) is satisfied with $\beta_3=0$. This will not affect the generality of the argument, as we can replace $\lambda_1$ with $\lambda_1+c_1$.

We start with
\begin{align}\label{E:l-u-25}
&\RE (\nabla^{\dagger}\nabla u+\nabla_{(d\phi)^{\sharp}}u, h^{p-1}u|u|^{p-2})_{\mu}=\RE (\nabla^{\dagger}\nabla u,h^{p-1}u|u|^{p-2})_{\mu}\nonumber\\
&+\RE (\nabla_{(d\phi)^{\sharp}}u,h^{p-1}u|u|^{p-2})_{\mu}.
\end{align}
Remembering $d\mu=e^{-\phi}\nu_{g}$, where $\nu_{g}$ is the volume measure, and performing the integration by parts on the term $(\cdot,\cdot)_{\mu}$ containing $\nabla^{\dagger}\nabla u$, we observe that the product rule for $\nabla(h^{p-1}u|u|^{p-2}e^{-\phi})$ yields four terms: (i) the item $-h^{p-1}u|u|^{p-2}e^{-\phi}\otimes d\phi$ corresponds to
\begin{equation*}
-(\nabla u, h^{p-1}u|u|^{p-2}e^{-\phi}\otimes d\phi),
\end{equation*}
which, due to lemma~\ref{L:om-sig}(iii), cancels the second item on the right hand side of~(\ref{E:l-u-25}); (ii) the remaining three terms coming from $\nabla(h^{p-1}u|u|^{p-2})e^{-\phi}$ lead to the three items $(\cdot,\cdot)_{\mu}$ which look the same as those the right hand side of~(\ref{E:l-u-8-a}) if in the latter equation we change $(\cdot,\cdot)$ to $(\cdot,\cdot)_{\mu}$ and replace $v$ by $u$ and $U_{\vep}$ by $h$. Therefore, using the same reasoning that led from~(\ref{E:l-u-8-a}) to~(\ref{E:l-u-9}), we get
\begin{align}\label{E:l-u-26}
&\RE (\nabla^{\dagger}\nabla u+\nabla_{(d\phi)^{\sharp}}u, h^{p-1}u|u|^{p-2})_{\mu}\geq(p-1)(\chi_{\{u\neq 0\}}\omega_{u}, h^{p-1}|u|^{p-4}\omega_{u})_{\mu}\nonumber\\
&+(\chi_{\{u\neq 0\}}\sigma_{u}, h^{p-1}|u|^{p-4}\sigma_{u})+(p-1)(\omega_{u}, h^{p-2}|u|^{p-2}dh)_{\mu},
\end{align}
where $\omega_u$ and $\sigma_u$ are as in~(\ref{E:omega-x}).

We now drop the (non-negative) second term on the right hand side of~(\ref{E:l-u-26}) and continue estimating from below with the help of the inequality (keeping in mind (A5) with $\beta_3=0$)
\begin{equation}\nonumber
(p-1)|\langle\omega_{u}, h^{p-2}|u|^{p-2}dh\rangle|\leq (p-1)h^{p-2}|\omega_{u}||dh||u|^{p-2}\leq (p-1)\gamma h^{p-1/2}|\omega_{u}||u|^{p-2}
\end{equation}
and the inequality~(n1) with  $\kappa_1=\kappa_2=2$ and
\begin{equation}\nonumber
a=(2(p-1))^{1/2}h^{(p-1)/2}|\omega_{u}||u|^{p/2-2},\quad b=2^{-1/2}\gamma(p-1)^{1/2}h^{p/2}|u|^{p/2}.
\end{equation}
Consequently, we obtain
\begin{equation}\label{E:l-u-27}
\RE (\nabla^{\dagger}\nabla u+\nabla_{(d\phi)^{\sharp}}u, h^{p-1}u|u|^{p-2})_{\mu}\geq-\frac{(p-1)\gamma^2}{4}\int_{M}h^{p}|u|^p\,d\mu.
\end{equation}

To perform the integration by parts in
\begin{equation*}
\RE (-\nabla_{X}u, h^{p-1}u|u|^{p-2})_{\mu}+ \RE (Vu, h^{p-1}u|u|^{p-2})_{\mu},
\end{equation*}
we observe that the first item is the same as the term located on the left hand side of~(\ref{E:l-u-19}) (provided that we replace $U_{\vep}$ by $h$). Therefore, we can simply copy~(\ref{E:l-u-21}) with the indicated replacement:
\begin{align}\label{E:l-u-28}
&\RE(-\nabla_{X}u+Vu,h^{p-1}u|u|^{p-2})_{\mu}\geq p^{-1}(p-1)\RE (u, (Xh)h^{p-2}u|u|^{p-2})_{\mu}\nonumber\\
&+\int_{M}\left[p^{-1}(\div X-X\phi)+h\right]h^{p-1}|u|^p\,d\mu.
\end{align}
It turns out that the lower estimate~(\ref{E:l-u-23}) is also applicable in this context with the following changes: replace $\vep$ by $\gamma$, replace $U_{\vep}$ by $h$, and make the last term on the right hand side of~(\ref{E:l-u-23}) look like this:
\begin{equation*}
-(2p)^{-1}(p-1)\kappa\gamma \int_{M}h^{p-1}|d\phi|^2|u|^p\,d\mu.
\end{equation*}
The mentioned changes come from the following two sources:  (i) replacement of~(\ref{E:aux-a4}) by
\begin{equation}\label{E:aux-a5}
|(Xh)h^{p-2}|\leq |X||dh|h^{p-2}\leq \kappa\gamma(|d\phi|^2+h+\beta_2)^{1/2}h^{p-1/2},
\end{equation}
where we used (A5) (as opposed to (\ref{E:l-u-14}));
\noindent (ii) replacement of~(\ref{E:l-u-22}) by
\begin{align}\nonumber
&(|d\phi|^2+h+\beta_2)^{1/2}h^{p-1/2}|u|^p\leq (|d\phi|+(h+\beta_2)^{1/2})h^{p-1/2}|u|^p\nonumber\\
&\leq |d\phi|h^{p-1/2}|u|^p+(h+\beta_2)^{1/2}h^{p-1/2}|u|^p\leq  2^{-1}|d\phi|^2h^{p-1}|u|^p+2^{-1}h^{p}|u|^p\nonumber\\
&+2^{-1}(h+\beta_2)h^{p-1}|u|^p+2^{-1}h^{p}|u|^p,
\end{align}
in which, similarly to~(\ref{E:l-u-22}), we used a procedure based on the inequality~(n1).

Therefore,
\begin{align}\label{E:l-u-29}
&\RE(-\nabla_{X}u+Vu,h^{p-1}u|u|^{p-2})_{\mu}\geq p^{-1}(p-\theta-(p-1)\kappa\gamma)\int_{M}h^p|u|^p\,d\mu\nonumber\\
&-p^{-1}(\beta_1+2^{-1}(p-1)\kappa\gamma\beta_2)\int_{M}h^{p-1}|u|^p\,d\mu\nonumber\\
&-(2p)^{-1}(p-1)\kappa\gamma\int_{M}h^{p-1}|d\phi|^2|u|^p\,d\mu.
\end{align}

Putting $\rho_0:=p^{-1}(\beta_1+2^{-1}(p-1)\kappa\gamma\beta_2)$ and combining~(\ref{E:l-u-27}) and~(\ref{E:l-u-29}), we get
\begin{align}\nonumber
&\RE((\hvmaxp +\lambda)u,h^{p-1}u|u|^{p-2})_{\mu}\nonumber\\
&\geq (1-p^{-1}\theta-p^{-1}(p-1)\kappa\gamma-4^{-1}(p-1)\gamma^2)\int_{M}h^p|u|^p\,d\mu\nonumber\\
&-(2p)^{-1}(p-1)\kappa\gamma\int_{M}h^{p-1}|d\phi|^2|u|^p\,d\mu,
\end{align}
for all $\lambda\geq\rho_0$.

Putting $\lambda_1:=\max\{\rho_0,\lambda_0\}$, where $\lambda_0$ is as in the proof of lemma~\ref{L:lemma-u-hmax}, the last estimate leads to
\begin{align}\label{E:l-u-30}
&(1-p^{-1}\theta-p^{-1}(p-1)\kappa\gamma-4^{-1}(p-1)\gamma^2)\|hu\|_{p,\mu}^{p}\nonumber\\
&\leq \|(\hvmaxp +\lambda)u\|_{p,\mu}\|hu\|_{p,\mu}^{p-1}+ (2p)^{-1}(p-1)\kappa\gamma\int_{M}h^{p-1}|d\phi|^2|u|^p\,d\mu\nonumber\\
&\leq \|(\hvmaxp +\lambda)u\|_{p,\mu}\|hu\|_{p,\mu}^{p-1}+ (2p)^{-1}(p-1)\kappa\gamma\||d\phi|^2u\|_{p,\mu}\|hu\|_{p,\mu}^{p-1}.
\end{align}
for all $\lambda\geq\lambda_1$, where in the last two estimates we used H\"older's inequality.

Before going further, we note that the hypothesis~(\ref{E:ineq-hyp-main-1}) ensures the positivity of the coefficient of $\|hu\|_{p,\mu}^{p}$.
Finally, referring to~(\ref{E:l-u-14}), we see that
\begin{align}\nonumber
&(2p)^{-1}(p-1)\kappa\gamma\||d\phi|^2u\|_{p,\mu}^p\leq (8p)^{-1}(p-1)\kappa\gamma\|U_{\vep_0}u\|_{p,\mu}\nonumber\\
&\leq p\kappa\gamma \|(\hvmaxp +\lambda)u\|_{p,\mu},\nonumber
\end{align}
where in the last inequality we used~(\ref{E:est-u-hmax}).

Combining the last estimate with~(\ref{E:l-u-30}) leads to~(\ref{E:coercive-1}) for all $u\in\vcomp$. Using a closure argument and part (i) of theorem~\ref{T:main-1}, we see that the estimate~(\ref{E:coercive-1}) is true for all $u\in\dom(\hvmaxp)$.  $\hfill\square$

\section{Proof of Corollary~\ref{C:main-1}}\label{S:proof-cor-1}
We specialize theorem~\ref{T:main-1} to the case $\phi=0$ and $X=0$. Clearly, the assumptions (A1)--(A5) are satisfied with $\kappa=\theta=\beta_1=\beta_2=0$. Using the condition $V\leq \zeta h$ we have $|Vu|\leq |V||u|\leq \zeta h|u|$, where $|V|$ is the norm of the endomorphism $V(x)\colon\vbn_x\to\vbn_x$. With this information, the estimate~(\ref{E:coercive-1}) leads to
\begin{equation*}
\|V u\|_{p}\leq \zeta \|hu\|_{p}\leq C\|(\hvmaxp+\lambda)u\|_{p},
\end{equation*}
for all $u\in \dom(\hvmaxp)$, where $C$ is some constant. Thus, we showed that if $u\in\dom(\hvmaxp)$ then $Vu\in L^p(\vbn)$, that is, the expression $\nabla^{\dagger}\nabla+ V$ is separated in $L^p(\vbn)$, $1<p<\infty$. $\hfill\square$

\section{More coercive estimates}\label{More Coerc}

\begin{prop}\label{more_coer_est_1} Let $M$ be a Riemannian manifold without boundary. Assume that $r_{\textrm{inj}}(M)>0$ and $\|\ricc_{M}\|_{\infty}<\infty$. Let $1<p<\infty$ and let $d\mu$ be as in~(\ref{E:meas-mu}). Assume that $V$ is a real-valued function of class $C^1(M)$. Additionally, assume that the vector field $X$ and the function $\phi$ from the expression~(\ref{E:def-H-d}) satisfy the hypotheses (A1)--(A6), with $h$ replaced by $V$ in (A3)--(A5). Then there exist constants $\widetilde{C}'$, $\widetilde{C}'' > 0$ (depending on $p$,
$n = dim(M)$ and the constants in (A2)--(A5)) such that
\begin{equation}\label{more_coer_est_1_1}
\vert\vert (\vert d\phi\vert^2 + V + 1)^{1/2}\vert du\vert
\vert\vert_{p, \mu} \leq \widetilde{C}'\vert\vert \lambda_1u + \hvmaxp u
\vert\vert_{p, \mu}
\end{equation}
\begin{equation}\label{more_coer_est_1_2}
\vert\vert \hess(u)\vert\vert_{p, \mu} \leq \widetilde{C}''
\vert\vert \lambda_1u + \hvmaxp u\vert\vert_{p, \mu},
\end{equation}
for all $u \in C_c^{\infty}(M)$, where $\lambda_1$ is the constant determined by theorem \ref{T:main-1}(ii).
\end{prop}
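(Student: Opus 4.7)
The proof should bundle four tools already in hand. Tool~(a): theorem~\ref{T:main-1}(ii), applied in the scalar setting with $h=V$, yields $\|Vu\|_{p,\mu}\leq C\|(\lambda_1+\hvmaxp)u\|_{p,\mu}$. Tool~(b): lemma~\ref{L:lemma-u-hmax} with $h=V$ gives $\|U_{\vep_0}u\|_{p,\mu}\leq C\|(\lambda_1+\hvmaxp)u\|_{p,\mu}$, which controls $\||d\phi|^2u\|_{p,\mu}$ through the definition~\eqref{E:uep}. Tool~(c): the domination estimate of lemma~\ref{more_coer_est_3} (the analogue of~\eqref{E:intro-4-a}), which for $U\geq c_0>0$ satisfying (A5) trades $\|U\,dv\|$ for $\vep\|\Delta v\|+\widetilde C_\vep\|Uv\|$. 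Tool~(d): the Calder\'on--Zygmund inequality $(\textrm{CZ}(p))$, available under $r_{\textrm{inj}}(M)>0$ and $\|\ricc_M\|_\infty<\infty$ (see section~\ref{SS:c-z}).

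\textbf{Plan for~\eqref{more_coer_est_1_1}.} Set $W:=(|d\phi|^2+V+1)^{1/2}$. By subadditivity of the square root, $W\leq |d\phi|+V^{1/2}+1$, so one can apply tool~(c) piecewise: the $|d\phi|$-term with $U=U_{\vep_0}$ (lemma~\ref{L:lemma-aux-U} confirms that $U_{\vep_0}$ satisfies (A5)), the $V^{1/2}$-term with $U=V+1$ (hypothesis (A5) for $V$). Combining these applications with tools (a) and (b) to absorb the resulting $\|Uu\|$ contributions should produce, for $u\in\mcomp$,
\[
\|W\,du\|_{p,\mu}\leq \vep\,\|\Delta u\|_{p,\mu}+C_\vep\,\|(\lambda_1+\hvmaxp)u\|_{p,\mu}.
\]
Since tool~(c) is stated in unweighted $L^p$, the version used here is either obtained by a parallel calculation or by passing through the transformation $u=e^{\phi/p}v$ employed in section~\ref{SS:thm-1-2-resume}.

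\textbf{Closing~\eqref{more_coer_est_1_1} and establishing~\eqref{more_coer_est_1_2}.} Decompose $\Delta u=P^d u-\langle d\phi,du\rangle+Xu-Vu$. The pointwise bounds $|\langle d\phi,du\rangle|\leq W|du|$ and, via (A4), $|Xu|\leq\kappa(|d\phi|^2+V+\beta_2)^{1/2}|du|\leq C\,W|du|$, together with tool~(a) and $\|P^d u\|_{p,\mu}\leq \|(\lambda_1+\hvmaxp)u\|_{p,\mu}+\lambda_1\|u\|_{p,\mu}$, yield
\[
\|\Delta u\|_{p,\mu}\leq C_1\|(\lambda_1+\hvmaxp)u\|_{p,\mu}+C_2\|W\,du\|_{p,\mu}.
\]
Substituting into the previous display produces a self-referential bound $\|W\,du\|_{p,\mu}\leq \vep C_2\|W\,du\|_{p,\mu}+\widetilde C\|(\lambda_1+\hvmaxp)u\|_{p,\mu}$; choosing $\vep$ so that $\vep C_2<1/2$ allows absorption and yields~\eqref{more_coer_est_1_1}, with the residual $\lambda_1\|u\|_{p,\mu}$ swallowed by accretivity (part~(i) of theorem~\ref{T:main-1}). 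For~\eqref{more_coer_est_1_2}, the already-obtained control of $\|\Delta u\|_{p,\mu}$ is combined with a weighted version of $(\textrm{CZ}(p))$, $\|\hess u\|_{p,\mu}\leq C(\|\Delta u\|_{p,\mu}+\|u\|_{p,\mu})$. The weighted $(\textrm{CZ}(p))$ is obtained either via the harmonic-coordinate localisation of appendix~A (in the spirit of lemma~\ref{more_coer_est_3}) or by applying the unweighted $(\textrm{CZ}(p))$ to $e^{-\phi/p}u$ and controlling the cross-terms produced by (p5) with the already-established~\eqref{more_coer_est_1_1} and tools (a), (b).

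\textbf{Principal obstacles.} The subtle point is that $W$ itself does not quite satisfy (A5): a direct computation via (A2) and (A5) for $V$ produces $|dW|\leq C(W^2+1)$ rather than the $W^{3/2}$ shape demanded by (A5). The piecewise splitting described above sidesteps this by reducing to the two components $U_{\vep_0}$ and $V+1$, each of which satisfies (A5) individually, at the cost of a more delicate recombination on the right-hand side. The bootstrap absorption that closes~\eqref{more_coer_est_1_1} is the other delicate step, requiring the constants contributed by (A4) and by the $\vep$-dependent output of tool~(c) to remain compatible with the hypothesis~\eqref{E:ineq-hyp-main-1} of (A6); keeping these constants in view throughout the argument is where most of the effort lies.
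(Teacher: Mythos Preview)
Your overall strategy is correct and matches the paper's: use lemma~\ref{more_coer_est_3} to trade $\|(\cdot)^{1/2}dv\|$ for $\vep\|\Delta v\|+\widetilde C_\vep\|(\cdot)v\|$, bootstrap by expressing the Laplacian term via $\hvmaxp$ and reabsorbing the self-referential piece, then invoke lemma~\ref{L:lemma-u-hmax} and theorem~\ref{T:main-1}(ii) to close; for~\eqref{more_coer_est_1_2}, apply the unweighted $(\textrm{CZ}(p))$ to $e^{-\phi/p}u$ and control the cross-terms from (p5) with what has already been proved.

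The one place you take a detour is in handling your ``principal obstacle''. The obstacle is illusory: in lemma~\ref{more_coer_est_3} the function $U$ that must satisfy $|dU|\leq \gamma U^{3/2}$ is the \emph{square} of the weight multiplying $dv$, since the conclusion reads $\|U^{1/2}dv\|\leq\cdots$. So you should not try to verify (A5) for $W$ but for $W^2\approx |d\phi|^2+V+\text{const}$. The paper does exactly this: it sets $Q:=\tfrac14 U_{\vep_0}+V+\beta_2+1$ and checks, via lemma~\ref{L:lemma-aux-U} and hypothesis (A5) for $V$, that $|dQ|\leq(\gamma+2\vep_0)Q^{3/2}+\beta_3$. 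Since $|d\phi|\leq Q^{1/2}$ and $(|d\phi|^2+V+1)^{1/2}\leq Q^{1/2}$, a single application of lemma~\ref{more_coer_est_3} with $U=Q$ suffices, and your piecewise splitting is unnecessary. Your splitting would still work, but the recombination is more laborious and offers no gain.

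One minor point of alignment: the paper applies lemma~\ref{more_coer_est_3} to $v=e^{-\phi/p}u$ in unweighted $L^p$, so the Laplacian term that appears is $\|\Delta(e^{-\phi/p}u)\|_p$, not $\|\Delta u\|_{p,\mu}$. This is then expanded via (p4) and (c2), and the extra terms ($\langle d\phi,du\rangle$, $|d\phi|^2u$, $(\Delta\phi)u$) are estimated using $|d\phi|\leq Q^{1/2}$, (A4), and (A2). Your decomposition $\Delta u=P^du-\langle d\phi,du\rangle+Xu-Vu$ is the analogue one step removed, and leads to the same absorption inequality after the transformation is unwound.
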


This proposition can be seen as a manifold variant of lemma 3.4 in \cite{So-Yo-13} whose proof depends on the global maximal elliptic regularity result (see corollary 9.10. in the book~\cite{G-T}).  In the case of a non-compact Riemannian manifold such a result is only true locally. To circumvent this issue, we require that the manifold $M$ satisfy the $L^p$-Calder\'on--Zygmund Inequality (CZ(p)), as discussed in section~\ref{SS:c-z} above.

The proof of the proposition will be given at the end of this section, after we have established a few key lemmas.

\begin{lemma}\label{more_coer_est_2}
Let $R > 0$ and let $B_R^E(0)$ denote a Euclidean ball in $\R^n$ of radius $R$ centered at $0\in\RR^n$.
Let $U \in C^1(\R^n)$ such that $U \geq c_0 > 0$ and
$\vert d U\vert \leq \gamma U^{3/2}$, for some constants
$c_0$, $\gamma > 0$. Then there exists a constant $\vep_1 > 0$ such that for all $0 < \vep \leq \vep_1$ and
$1 < p <\infty$ and $v \in C_c^{\infty}(\R^n)$, we have
\begin{equation}\label{E:eucl-est-1}
\vert\vert \chi_{B_{R}^E(0)}U^{1/2}d v\vert\vert^p_{p} \leq
C^p\vep^p\vert\vert\chi_{B_{R}^E(0)}\Delta v\vert\vert^p_{p} +
A_{\vep}^p\vert\vert \chi_{B_{R}^E(0)}U v\vert\vert_{p}^p,
\end{equation}
where $C$ is a constant independent of $\varepsilon$ (but depending on $p$ and $n$) and $A_{\vep}$ is a constant depending on $\varepsilon$, $p$, $n$. Here, $\chi_{B_{R}^E(0)}$ denotes the characteristic function of $B_R^E(0)$, $\Del$ is the standard Laplacian on $\RR^n$, and $\|\cdot\|_{p}$ is the usual norm in $L^p(\RR^n)$.
\end{lemma}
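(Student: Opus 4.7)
The proof follows the integration-by-parts strategy used in~\cite{MPR-05} to treat weighted gradient estimates for the Ornstein--Uhlenbeck operator on $\R^n$. I would first reduce the localized statement~(\ref{E:eucl-est-1}) to its un-localized analogue
\[
\int_{\R^n}U^{p/2}|dv|^p\,dx\leq C^p\vep^p\int_{\R^n}|\Del v|^p\,dx+A_\vep^p\int_{\R^n}U^p|v|^p\,dx, \qquad v\in C_c^\infty(\R^n),
\]
by applying this un-localized inequality to $\eta v$ with $\eta\in C_c^\infty(\R^n)$ a cutoff equal to $1$ on $B_R^E(0)$; the $v\,d\eta$ and $v\,\Del\eta$ error terms coming from the product rule are absorbed into the $A_\vep\|\chi_{B_R^E(0)}Uv\|_p$ piece, using $U\geq c_0>0$.

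\textbf{Main computation.} Setting $J:=\int U^{p/2}|dv|^p\,dx$, I would rewrite $J=\int U^{p/2}|dv|^{p-2}\langle dv,dv\rangle\,dx$ (with the regularization $|dv|^{p-2}\rightsquigarrow(|dv|^2+\sigma)^{(p-2)/2}$ when $1<p<2$) and integrate by parts once, moving one copy of $d$ onto the rest via $\div$. Using the identity
\[
\div\bigl(U^{p/2}|dv|^{p-2}dv\bigr)=U^{p/2}|dv|^{p-2}\Del v+(p-2)U^{p/2}|dv|^{p-4}(\hess v)(dv,dv)+\tfrac{p}{2}U^{p/2-1}|dv|^{p-2}\langle dU,dv\rangle,
\]
$J$ splits into three terms involving $v$ tested against $\Del v$, $\hess v$, and $\langle dU,dv\rangle$ respectively. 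The $\Del v$-term and the $\hess v$-term are both controlled by $\|\Del v\|_p$ via three-factor H\"older and the classical $L^p$ Calder\'on--Zygmund inequality on $\R^n$ (which bounds $\|\hess v\|_p\lesssim_{n,p}\|\Del v\|_p$); with a small parameter $\vep$, each contributes $C^p\vep^p\|\Del v\|_p^p$ plus a small fraction of $J$ plus $A\|Uv\|_p^p$ after Young's inequality. The $dU$-term, by virtue of the hypothesis $|dU|\leq\gamma U^{3/2}$, satisfies
\[
\Big|\tfrac{p}{2}\int vU^{p/2-1}|dv|^{p-2}\langle dU,dv\rangle\,dx\Big|\leq\tfrac{p\gamma}{2}\|Uv\|_p\,J^{(p-1)/p},
\]
which a further Young's inequality converts into $\delta J+A_\delta\|Uv\|_p^p$.

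\textbf{Absorption and main obstacle.} Collecting the three contributions yields
\[
J\leq C^p\vep^p\|\Del v\|_p^p+(c\vep+\delta)J+A_\vep^p\|Uv\|_p^p,
\]
for an absolute $c=c(n,p,\gamma)>0$; choosing $\vep\leq\vep_1$ small enough so that $c\vep+\delta<1$ allows the $J$ on the right to be absorbed into the left and concludes the proof. The only genuine obstacle is the H\"older/Young bookkeeping: one must distribute the weight $U^{p/2}$ between the $|dv|^p$-piece and the $|v|^p$-piece with exponents producing the correct $\vep^p$ factor in front of $\|\Del v\|_p^p$ while keeping the residual $J$-contribution strictly below $1$ uniformly for $1<p<\infty$; in particular, for $1<p<2$ one must pass carefully to the limit $\sigma\downarrow 0$ in the regularization. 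These technicalities are already worked out in~\cite{MPR-05} in a slightly different weight framework and transfer to the present situation with only notational changes, so I would quote the relevant computations from there rather than redo them.
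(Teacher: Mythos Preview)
Your reduction step is the gap. When $\eta\equiv 1$ on $B_R^E(0)$ and you expand $\Delta(\eta v)=\eta\Delta v-2\langle d\eta,dv\rangle+v\Delta\eta$ and $d(\eta v)=\eta\,dv+v\,d\eta$, every commutator term is supported in $\supp(d\eta)\subset\R^n\setminus B_R^E(0)$, so none of them can be absorbed into $\|\chi_{B_R^E(0)}Uv\|_p$ or $\|\chi_{B_R^E(0)}\Delta v\|_p$; you are left with uncontrolled $\|dv\|_p$- and $\|v\|_p$-contributions living strictly outside the ball. Choosing instead $\eta$ supported inside $B_R^E(0)$ kills the left-hand side, and choosing $\eta\equiv 1$ on a smaller concentric ball produces a $\|dv\|_p$-error on the intervening annulus that you cannot iterate back to a fixed ball. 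Your global integration-by-parts inequality may well hold, but it simply does not imply the localized statement with $\chi_{B_R^E(0)}$ on the right.

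The paper's route is quite different and exploits the structure of the weight directly. From \cite{MPR-05} one has, at each point $x_0$, an estimate of $\|\chi_{B^E_{r/2}(x_0)}U^{1/2}dv\|_p$ by $\vep\|\chi_{B^E_r(x_0)}\Delta v\|_p+\vep K\|\chi_{B^E_r(x_0)}U^{1/2}dv\|_p+K\vep^{-1}\|\chi_{B^E_r(x_0)}Uv\|_p$ at the \emph{weight-adapted} scale $r=\gamma^{-1}U(x_0)^{-1/2}$; on a ball of this radius the hypothesis $|dU|\le\gamma U^{3/2}$ forces $U$ to be essentially constant, which is what makes the local estimate work. One then covers $B_R^E(0)$ by the halved balls $B^E_{r_{x_k}/2}(x_k)$ using a Besicovitch-type lemma (lemma~2.2 of \cite{MPR-05}) guaranteeing that the full balls $B^E_{r_{x_k}}(x_k)$ have overlap bounded by $T(n)$, sums the $p$-th powers of the local estimates, and absorbs the middle $\vep K$-term for $\vep$ small. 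The idea you are missing is precisely this variable-radius covering adapted to $U^{-1/2}$; no single cutoff at a fixed scale can substitute for it.
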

\begin{proof}
Let $x_0$ be a fixed point in $B_R^E(0)$ and let $B_R^E(x_0)$ denote the Euclidean ball of radius $R$ centered at $x_0$. For $U$ as in the hypothesis of this lemma and for $r := \frac{1}{\gamma}U(x_0)^{-1/2}$, the authors of~\cite{MPR-05} established the following estimate (see equation (2.5) in~\cite{MPR-05}): there exists a constant $K>0$ such that for all $\varepsilon>0$ and all $v\in C_c^{\infty}(\R^n)$ we have
\begin{equation}\label{more_coer_est_2_1}
\vert\vert \chi_{B_{r/2}^E(x_0)}U^{1/2}d v\vert\vert_{p} \leq
\vep \vert\vert\chi_{B_{r}^E(x_0)}\Delta v\vert\vert_{p} +
\vep K\vert\vert \chi_{B_{r}^E(x_0)}U^{1/2} d v\vert\vert_{p} +
\frac{K}{\vep}\vert\vert \chi_{B_{r}^E(x_0)}U v\vert\vert_{p}.
\end{equation}

From our assumptions on $U$ we see that $\vert\frac{1}{\gamma}d (U^{-1/2})\vert
\leq \frac{1}{2}$. If we then consider the covering
$\mathscr{C}:=\{B^E_{r_x}(x): x \in B_R(0)\}$ of $B_R^E(0)$ by the balls $B^E_{r_x}(x)$ centered at $x\in B_R^E(0)$ and with radius
$r_x := \frac{1}{\gamma}U^{-1/2}(x)$, we have that
\begin{align*}
\vert r_x\vert &\leq \frac{c_0^{-1/2}}{\gamma}, \\
\vert r_x - r_y\vert &= \frac{1}{\gamma}\vert U^{-1/2}(x) - U^{-1/2}(y)\vert
\leq \frac{1}{\gamma}\vert d U^{-1/2}(\widehat{\theta})\vert\vert x - y\vert
\leq \frac{1}{2}\vert x - y\vert,
\end{align*}
where $\widehat{\theta}$ lies on the straight line joining the point $x$ to $y$, and
in the second estimate we have used the mean value theorem.

Thus, the covering $\mathscr{C}$ satisfies the hypotheses of lemma 2.2 in \cite{MPR-05}, which is a variant of Besicovitch Covering Theorem (see~\cite{Guzman-75} for a thorough discussion of this topic). According to the mentioned lemma from  \cite{MPR-05}, there exists a countable collection $\widetilde{\mathscr{C}}:=\{B^E_{\frac{r_{x_k}}{2}}(x_k)\}$, where $x_k\in B_R(0)$ and $r_{x_k}$ are as in~$\mathscr{C}$, such that the following properties are satisfied: (i) the collection $\widetilde{\mathscr{C}}$ covers $B_R(0)$; and (ii) at most $T(n)$ among the balls $\{B^E_{r_{x_k}}(x_k)\}$ overlap, where $T(n)$ depends only on $n$.

We then have
\begin{align*}
\vert\vert \chi_{B_{R}^E(0)}U^{1/2}d v\vert\vert^p_{p} &\leq
\sum_{k=1}^{\infty}
\vert\vert \chi_{B_{\frac{r_{x_k}}{2}}^E(x_k)}U^{1/2}d v\vert\vert^p_{p} \\
&\leq \sum_{k=1}^{\infty}\bigg{[}
\vep \vert\vert\chi_{B_{r_{x_k}}^E(x_k)}\Delta v\vert\vert_{p} +
\vep K\vert\vert \chi_{B_{r_{x_k}}^E(x_k)}U^{1/2} d v\vert\vert_{p} +
\frac{K}{\vep}\vert\vert \chi_{B_{r_{x_k}}^E(x_k)}U v\vert\vert_{p}
\bigg{]}^p \\
&\leq \sum_{k=1}^{\infty}3^{p-1}\bigg{[}
\vep^p \vert\vert\chi_{B_{r_{x_k}}^E(x_k)}\Delta v\vert\vert_{p}^p +
\vep^p K^p\vert\vert \chi_{B_{r_{x_k}}^E(x_k)}U^{1/2}  dv\vert\vert_{p}^p +
\frac{K^p}{\vep^p}\vert\vert \chi_{B_{r_{x_k}}^E(x_k)}U v\vert\vert_{p}^p
\bigg{]} \\
&\leq 3^{p-1}T(n)\bigg{[}
\vep^p \vert\vert\chi_{B_{R}^E(0)}\Delta v\vert\vert_{p}^p +
\vep^p K^p\vert\vert \chi_{B_{R}^E(0)}U^{1/2} d v\vert\vert_{p}^p +
\frac{K^p}{\vep^p}\vert\vert \chi_{B_{R}^E(0)}U v\vert\vert_{p}^p
\bigg{]},
\end{align*}
where in the second estimate we used~(\ref{more_coer_est_2_1}) and in the third estimate we used (n2) from section \ref{SS:bin}.

This allows us to obtain
\begin{equation*}
[1 - 3^{p-1}T(n)K^p\vep^p]
\vert\vert \chi_{B_{R}^E(0)}U^{1/2}d v\vert\vert^p_{p} \leq
3^{p-1}T(n)\vep^p
\vert\vert\chi_{B_{R}^E(0)}\Delta v\vert\vert_{p}^p +
\frac{3^{p-1}T(n)K^p}{\vep^p}
\vert\vert \chi_{B_{R}^E(0)}U v\vert\vert_{p}^p.
\end{equation*}

Choosing $\vep_1$ small enough so that for all $0<\vep<\vep_1$ we have
\begin{equation*}
1 - 3^{p-1}T(n)K^p\vep^p>0\quad\textrm{ and }\quad \frac{3^{p-1}T(n)}{1 - 3^{p-1}T(n)K^p\vep^p}<2(3^{p-1}T(n)),
\end{equation*}
and we arrive at the estimate
\begin{equation*}
\vert\vert \chi_{B_{R}^E(0)}U^{1/2}d v\vert\vert^p_{p} \leq
2(3^{p-1}T(n))\vep^p
\vert\vert\chi_{B_{R}^E(0)}\Delta v\vert\vert_{p}^p +
\frac{3^{p-1}T(n)K^p}{\vep^p(1 - 3^{p-1}T(n)K^p\vep^p)}
\vert\vert \chi_{B_{R}^E(0)}U v\vert\vert_{p}^p.
\end{equation*}

Defining $C$ and $A_{\vep}$ so that
\begin{equation*}
C^p=2(3^{p-1}T(n)), \quad A_{\vep}^{p} = \frac{3^{p-1}T(n)K^p}{\vep^p(1 - 3^{p-1}T(n)K^p\vep^p)},
\end{equation*}
we obtain~(\ref{E:eucl-est-1}).
\end{proof}

We will globalize the estimate~(\ref{E:eucl-est-1}) with the help of lemma 1.6 from~\cite{hebey} (or lemma~4.15 from~\cite{GP-2015}), which we recall here for convenience:

\begin{lemma}\label{L:gp-15} Let $M$ be a geodesically complete $n$-dimensional Riemannian manifold such that $\ricc_{M}\geq c_1$, where $c_1$ is a constant. Then, for all $r > 0$ there exists a sequence
of points $\{x_i\} \subset M$ and a natural number
$N = N(n, r, c_1)$, such that
\begin{itemize}
\item $B_{r/4}(x_i) \cap B_{r/4}(x_j) = \emptyset$ for all $i$, $j \in \mathbb{N}$ with $i \neq j$, where $B_{r/4}(x_i)$ is as in~(\ref{E:ball-r}) with $\rho=r/4$;

\item $\cup_{i \in \mathbb{N}}B_{r/2}(x_i) = M$;

\item the intersection multiplicity of the system $\{B_{2r}(x_i) : i \in
\mathbb{N}\} \leq N$.
\end{itemize}
\end{lemma}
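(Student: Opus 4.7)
The plan is to combine a greedy (maximal-packing) construction with the Bishop--Gromov volume comparison theorem, which is available thanks to the Ricci lower bound $\ricc_{M}\geq c_1$. First I would invoke Zorn's lemma (or a transfinite greedy argument) to produce a maximal family $\{x_i\}\subset M$ with the property that the balls $\{B_{r/4}(x_i)\}$ are pairwise disjoint; this family is automatically countable because $M$ is second countable, so any disjoint family of non-empty open sets is at most countable. The first bullet is then built in by construction. For the second bullet I argue by contradiction from maximality: if some $y\in M$ satisfied $y\notin B_{r/2}(x_i)$ for every $i$, then $d_g(y,x_i)\geq r/2$ for all $i$, forcing $B_{r/4}(y)\cap B_{r/4}(x_i)=\emptyset$ for every $i$, which would contradict maximality.

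The main obstacle, and the place where the Ricci bound is essential, is the uniform intersection-multiplicity bound. Fix $y\in M$ and set $I(y):=\{i:y\in B_{2r}(x_i)\}$; my aim is to bound $\#I(y)$ independently of $y$. For each $i\in I(y)$ the triangle inequality gives the two inclusions $B_{r/4}(x_i)\subset B_{9r/4}(y)$ and $B_{r/4}(y)\subset B_{9r/4}(x_i)$. Disjointness of the balls $\{B_{r/4}(x_i)\}_{i\in I(y)}$ together with the first inclusion yields
\begin{equation*}
\sum_{i\in I(y)}\nu_{g}(B_{r/4}(x_i))\leq \nu_{g}(B_{9r/4}(y)).
\end{equation*}
Writing $V_K$ for the volume function of the simply-connected model space of constant sectional curvature $K:=c_1/(n-1)$, the Bishop--Gromov monotonicity applied first at $x_i$ and then at $y$, together with the second inclusion $B_{r/4}(y)\subset B_{9r/4}(x_i)$, gives
\begin{equation*}
\nu_{g}(B_{r/4}(x_i))\geq \frac{V_K(r/4)}{V_K(9r/4)}\,\nu_{g}(B_{9r/4}(x_i))\geq \frac{V_K(r/4)}{V_K(9r/4)}\,\nu_{g}(B_{r/4}(y))\geq \left(\frac{V_K(r/4)}{V_K(9r/4)}\right)^{\!2}\nu_{g}(B_{9r/4}(y)).
\end{equation*}
Combining the two displays produces
\begin{equation*}
\#I(y)\leq \left(\frac{V_K(9r/4)}{V_K(r/4)}\right)^{\!2}=:N(n,r,c_1),
\end{equation*}
a bound depending only on $n$, $r$, and $c_1$, which is the desired third bullet.

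The remaining subtleties are purely technical: one must check that the maximal disjoint family really is countable (as noted above, this is immediate from second countability) and that Bishop--Gromov applies as stated at every point of $M$, for which geodesic completeness is used to ensure that the exponential map is globally defined and that balls of every radius are non-empty, so the standard comparison argument proceeds without modification. No further geometric input beyond the Ricci lower bound is needed.
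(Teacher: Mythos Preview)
Your proposal is correct and follows exactly the standard argument (maximal $r/4$-packing plus Bishop--Gromov volume comparison) that appears in the references the paper cites, namely lemma~1.6 of \cite{hebey} and lemma~4.15 of \cite{GP-2015}; the paper itself does not supply an independent proof but simply recalls the statement from those sources.
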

We will apply this lemma to the estimate~(\ref{E:eucl-est-1}) with the help of harmonic coordinates, which we recalled in appendix~A.

\begin{lemma}\label{more_coer_est_3} Let $M$ be a $n$-dimensional Riemannian manifold with $r_{\textrm{inj}}(M)>0$ and $\|\ricc_{M}\|_{\infty}<\infty$. Let $U \in C^1(M)$ such that $U \geq c_0 > 0$ and
$\vert d U\vert \leq \gamma U^{3/2}$  for some constants
$c_0$, $\gamma > 0$.
Then there exists a constant $\vep_2 > 0$
such that for all $0 < \vep < \vep_2$ we have
\begin{equation*}
\vert\vert U^{1/2}d v\vert\vert_{p} \leq
\vep\vert\vert \Delta v\vert\vert_{p} +
\widetilde{C}_{\vep}\vert\vert U v\vert\vert_{p},
\end{equation*}
for all $v \in C_c^{\infty}(M)$, where $\widetilde{C}_{\vep} > 0$ is a constant depending on $n$, $p$, $\vep$ and $N$ (with $N$ as in lemma~\ref{L:gp-15}). Here, the symbol $\|\cdot\|_{p}$ indicates the usual norm in $L^p(M)$ and $\Delta$ is the scalar Laplacian on $M$.
\end{lemma}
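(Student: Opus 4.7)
The strategy is to localize via harmonic coordinates, apply Lemma~\ref{more_coer_est_2} in each chart, and patch together using the bounded-overlap covering from Lemma~\ref{L:gp-15}. By theorem B.4 of~\cite{GP-2015} (recalled in appendix~A), the hypotheses $r_{\mathrm{inj}}(M)>0$ and $\|\ricc_M\|_\infty<\infty$ guarantee that the harmonic radius is bounded below by some $r_H>0$; in particular, $M$ is complete, and for each $x\in M$ there is a harmonic chart $\varphi_x\colon B_{r_H}(x)\to\Omega_x\subset\RR^n$ in which $\alpha^{-1}\delta^{jk}\leq g^{jk}\leq\alpha\delta^{jk}$ and $|\partial_\ell g^{jk}|\leq\beta$ uniformly, with $\alpha$ arbitrarily close to $1$ upon shrinking $r_H$. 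Harmonicity gives $\Delta u=-g^{jk}\partial_j\partial_k u$ in these coordinates. Fix $r_0\in(0,r_H/2)$ and universal radii $0<R_1<R_2$ with $\varphi_i(B_{r_0/2}(x_i))\subset B_{R_1}^E(0)\subset B_{R_2}^E(0)\subset\varphi_i(B_{r_0}(x_i))$ for each $i$. By Lemma~\ref{L:gp-15} applied with $r=r_0$, obtain $\{x_i\}\subset M$ such that $\{B_{r_0/2}(x_i)\}$ covers $M$ and the intersection multiplicity of $\{B_{r_0}(x_i)\}$ is at most some universal $N$.

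\textbf{Local application.} Fix $v\in\mcomp$, and for each $i$ set $\tilde v_i:=v\circ\varphi_i^{-1}$, $\tilde U_i:=U\circ\varphi_i^{-1}$ on $\Omega_i$. A standard smooth-cutoff extension produces $\hat v_i\in C_c^\infty(\RR^n)$ and $\hat U_i\in C^1(\RR^n)$ agreeing with $\tilde v_i,\tilde U_i$ on $B_{R_2}^E(0)$ and satisfying $\hat U_i\geq c_0$, $|d\hat U_i|\leq\gamma'\hat U_i^{3/2}$ on all of $\RR^n$ (with a possibly enlarged $\gamma'$ depending only on $\gamma$ and $c_0$). Applying Lemma~\ref{more_coer_est_2} to $(\hat v_i,\hat U_i)$ with Euclidean ball $B_{R_1}^E(0)$, and using that the integrands agree with $\tilde v_i,\tilde U_i$ there, yields
\[
\|\chi_{B_{R_1}^E}\tilde U_i^{1/2}d_E\tilde v_i\|_p^p\leq C^p\vep^p\|\chi_{B_{R_1}^E}\Delta_E\tilde v_i\|_p^p+A_\vep^p\|\chi_{B_{R_1}^E}\tilde U_i\tilde v_i\|_p^p.
\]
The uniform bounds on $g$ give $|dv|_g\asymp|d_E\tilde v_i|$ and $d\nu_g\asymp dx$, so the LHS majorizes a universal constant times $\|\chi_{B_{r_0/2}(x_i)}U^{1/2}dv\|_{L^p(d\nu_g)}^p$, while the second term on the RHS is dominated by a multiple of $\|\chi_{B_{r_0}(x_i)}Uv\|_{L^p(d\nu_g)}^p$.

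\textbf{Main obstacle.} The tricky term is $\|\chi_{B_{R_1}^E}\Delta_E\tilde v_i\|_p$, since $\Delta_E$ is not the coordinate expression of $\Delta$. Decompose $\Delta_E\tilde v_i=-g^{jk}\partial_j\partial_k\tilde v_i+(g^{jk}-\delta^{jk})\partial_j\partial_k\tilde v_i$; the first summand is precisely $\Delta v$ in coordinates, and the second is bounded by a constant times $|\mathrm{Hess}_E\tilde v_i|$. Control the Euclidean Hessian via the interior $L^p$ regularity estimate for the uniformly elliptic non-divergence operator $L_i=-g^{jk}\partial_j\partial_k$ (e.g.~theorem 9.11 of~\cite{G-T}, applied on the concentric pair $B_{R_1}^E\subset B_{R_2}^E$):
\[
\|\mathrm{Hess}_E\tilde v_i\|_{L^p(B_{R_1}^E)}\leq K\bigl(\|L_i\tilde v_i\|_{L^p(B_{R_2}^E)}+\|\tilde v_i\|_{L^p(B_{R_2}^E)}\bigr),
\]
with $K$ depending only on $\alpha,\beta,R_1,R_2,p,n$ and therefore uniform in $i$. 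Since $L_i\tilde v_i=(\Delta v)\circ\varphi_i^{-1}$, and $U\geq c_0$ lets us absorb any $\|v\|_p$ term into $\|Uv\|_p$, we obtain (after possibly adjusting $\vep$)
\[
\|\chi_{B_{r_0/2}(x_i)}U^{1/2}dv\|_p^p\leq C_*\vep^p\|\chi_{B_{r_0}(x_i)}\Delta v\|_p^p+\widetilde A_\vep\|\chi_{B_{r_0}(x_i)}Uv\|_p^p.
\]

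\textbf{Summation.} Summing over $i$, using that $\{B_{r_0/2}(x_i)\}$ covers $M$ and $\{B_{r_0}(x_i)\}$ has intersection multiplicity at most $N$:
\[
\|U^{1/2}dv\|_p^p\leq\sum_i\|\chi_{B_{r_0/2}(x_i)}U^{1/2}dv\|_p^p\leq NC_*\vep^p\|\Delta v\|_p^p+N\widetilde A_\vep\|Uv\|_p^p.
\]
Taking $p$-th roots (and using $(a+b)^{1/p}\leq a^{1/p}+b^{1/p}$), then rescaling $\vep\mapsto\vep/(NC_*)^{1/p}$, produces the claim with $\vep_2>0$ determined by the threshold $\vep_1$ of Lemma~\ref{more_coer_est_2} and $\widetilde C_\vep$ depending on $n$, $p$, $\vep$ and $N$.
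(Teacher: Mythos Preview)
Your argument follows the same architecture as the paper's: harmonic coordinates (via the uniform harmonic-radius bound from theorem~B.4 of~\cite{GP-2015}), the local Euclidean estimate of Lemma~\ref{more_coer_est_2}, and the bounded-overlap covering of Lemma~\ref{L:gp-15} to sum. The ball inclusions and final rescaling also match.

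The one substantive difference is your treatment of what you call the ``main obstacle'': Lemma~\ref{more_coer_est_2} is stated for the Euclidean Laplacian, whereas the target inequality involves the Riemannian $\Delta$. The paper simply asserts the transferred estimate~(\ref{more_coer_est_3_1}) with the Riemannian $\Delta$ and the same constants, tacitly absorbing the metric discrepancies into the comparison of norms and measures. You instead make this passage explicit by decomposing $\Delta_E\tilde v_i=(\Delta v)\circ\varphi_i^{-1}+(g^{jk}-\delta^{jk})\partial_j\partial_k\tilde v_i$ and controlling the Hessian remainder through the interior $L^p$ estimate of theorem~9.11 in~\cite{G-T} for the uniformly elliptic operator $-g^{jk}\partial_j\partial_k$ (which coincides with $\Delta$ in harmonic coordinates). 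This is a legitimate and arguably more transparent justification of the step the paper leaves implicit; the trade-off is that you carry an extra $\|\tilde v_i\|_{L^p(B_{R_2}^E)}$ term, which you correctly absorb into $\|Uv\|_p$ via $U\geq c_0$. Your extension of $\hat U_i$ to $\RR^n$ preserving $|d\hat U_i|\leq\gamma'\hat U_i^{3/2}$ is stated without detail, but it is routine (e.g.\ compose with a smooth retraction onto the closed Euclidean ball), and in any case only the values on $B_{R_1}^E$ enter the final inequality.
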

\begin{proof}
Thanks to the assumption $r_{\textrm{inj}}(M)>0$ and $\|\ricc_{M}\|_{\infty}<\infty$, we can use theorem B.4 in \cite{GP-2015} to infer that there exists $D= D(n, r_{\textrm{inj}}(M), \vert\vert \ricc_{M}\vert\vert_{\infty}) > 0$ such that
\begin{equation}\label{E:har-rad-lbound}
r_{2, 1, 1/2}(M) \geq D;
\end{equation}
see definition~\ref{D:def-a-2} and the equation~(\ref{notation-a-1}) in appendix A for the meaning of this notation.
Let $ 0 < r <r^*:= \frac{\min\{D, 1\}}{2}$. For any
$\widetilde{x} \in M$, there exists a $C^{1,1/2}$-harmonic coordinate chart
$\psi : B_{r^*}(\widetilde{x}) \rightarrow \R^n$ with accuracy 2, such that
\begin{align*}
\frac{1}{2}\delta_{ij} \leq g_{ij} &\leq 2\delta_{ij}; \\
\vert\vert g_{ij}\vert\vert_{\infty}, \vert\vert g^{ij}\vert\vert_{\infty}
&\leq C_1(n); \\
\vert\vert\partial_l g_{ij}\vert\vert_{\infty},
\vert\vert\partial_l g^{ij}\vert\vert_{\infty} &\leq C_2(D, n),
\end{align*}
for all $i,j,l$.

Using these properties we have
\begin{equation}\label{coordinate_chart}
\psi(B_{r/2}(\widetilde{x})) \subset B^E_{\frac{r}{\sqrt{2}}}(0) \subset
B^E_{\sqrt{2}r}(0) \subset \psi(B_{2r}(\widetilde{x})).
\end{equation}

Keeping in mind the inclusions (\ref{coordinate_chart}) and using lemma \ref{more_coer_est_2} with $R = r\sqrt{2}$, we infer that there exists $\vep_1$ such that for all $0<\widehat{\vep} <\vep_1$  and all $v\in\mcomp$ we have
\begin{equation}\label{more_coer_est_3_1}
\vert\vert \chi_{B_{r/2}(\widetilde{x})}U^{1/2}d v\vert\vert^p_{p} \leq
C^p\widehat{\vep}^p\vert\vert\chi_{B_{2r}(\widetilde{x})}\Delta v\vert\vert^p_{p} +
A_{\widehat{\vep}}^p\vert\vert \chi_{B_{2r}(\widetilde{x})}U v\vert\vert_{p}^p,
\end{equation}
where $C$ and $A_{\widehat{\vep}}$ are constants as in lemma~\ref{more_coer_est_2}.

Using lemma~\ref{L:gp-15} and~(\ref{more_coer_est_3_1}), for all $v\in\mcomp$ we have
\begin{align*}
\int_{M}\vert U^{1/2}d v\vert^p\,d\nu_g &\leq \sum_{i=1}^{\infty}
\int_{B_{r/2}(x_i)}\vert U^{1/2}d v\vert^p\,d\nu_g \\
&\leq \sum_{i=1}^{\infty}\bigg{[}
\widehat{\vep}^pC^p\int_{B_{2r}(x_i)}\vert\Delta v\vert^p\,d\nu_g +
A_{\widehat{\vep}}^p\int_{B_{2r}(x_i)}\vert Uv\vert^p\,d\nu_g\bigg{]}
\\
&\leq N\bigg{[}\widehat{\vep}^pC^p\vert\vert\Delta v\vert\vert^p_{p} +
A_{\widehat{\vep}}^p\vert\vert Uv\vert\vert^p_{p}\bigg{]},
\end{align*}
where $d\nu_g$ is the volume measure of $M$. (Here, for the first estimate we used the second property in lemma~\ref{L:gp-15}, for the second inequality we used~(\ref{more_coer_est_3_1}), and for the last inequality we used the third property in lemma~\ref{L:gp-15}.)

Using the inequality (n3) in section \ref{SS:bin}, the previous estimate leads to
\begin{align*}
\vert\vert U^{1/2}d v\vert\vert_{p} &\leq \bigg{(}
N\bigg{[}\widehat{\vep}^pC^p\vert\vert\Delta v\vert\vert^p_{p} +
A_{\widehat{\vep}}^p\vert\vert Uv\vert\vert^p_{p}\bigg{]}
\bigg{)}^{1/p} \\
&\leq N^{1/p}\widehat{\vep}C\vert\vert\Delta v\vert\vert_{p} +
N^{1/p}A_{\widehat{\vep}}\vert\vert Uv\vert\vert_{p}.
\end{align*}
If we set $\vep_2:=CN^{1/p}\vep_1$, where $\vep_1$ is as in lemma
\ref{more_coer_est_2}, we see that if $0<\vep<\vep_2$, then $\widehat{\vep}:= \frac{\vep}{CN^{1/p}}$ satisfies $0<\widehat{\vep}<\vep_1$. Plugging $\widehat{\vep}=\frac{\vep}{CN^{1/p}}$ into the previous estimate, we get
\begin{equation*}
\vert\vert U^{1/2}d v\vert\vert_{p} \leq
\vep\vert\vert \Delta v\vert\vert_{p} +
\widetilde{C}_{\vep}\vert\vert U v\vert\vert_{p},
\end{equation*}
for all $v\in\mcomp$, where $\widetilde{C}_{\vep}$ is a constant depending on $\vep$, $n$, $p$, and $N$.
\end{proof}

We are now ready to prove proposition \ref{more_coer_est_1}.

\begin{proof}[Proof of proposition \ref{more_coer_est_1}]

Let $\vep_{0}$ be as in lemma~\ref{L:lemma-u-hmax}, let $U_{\vep_0}$ be as in~(\ref{E:uep}) with $\vep=\vep_0$, and let $\beta_2$ be as in
assumption (A4) in section \ref{assump_V_X_etc}.  We define
\begin{equation}\label{E:def-Q}
Q(x) := \frac{1}{4}U_{\vep_0}(x) + V(x) + \beta_2 + 1,
\end{equation}
and referring to~(\ref{E:l-u-14}) we note that
\begin{equation}\label{E:d-phi-q}
\vert d\phi\vert \leq Q^{1/2}.
\end{equation}

Using the formula
\begin{equation}\label{E:cr-e-phi-u}
e^{-\phi/p}du=\frac{1}{p}(e^{-\phi/p}u)d\phi+d(e^{-\phi/p}u),
\end{equation}
which follows from (p1) and (c1), for all $u\in\mcomp$ we have
\begin{align}
\vert\vert Q^{1/2}\vert du\vert \vert\vert_{p, \mu} &=
\vert\vert Q^{1/2}\vert d u\vert e^{-\phi/p}\vert\vert_{p} \nonumber \\
&\leq \vert\vert Q^{1/2}\vert d(e^{-\phi/p}u)\vert\vert\vert_{p} +
\frac{1}{p}\vert\vert Q^{1/2}\vert d\phi\vert e^{-\phi/p}u\vert\vert_{p}
\nonumber \\
&\leq \vert\vert Q^{1/2}\vert d(e^{-\phi/p}u)\vert\vert\vert_{p} +
\frac{1}{p}\vert\vert Qu\vert\vert_{p, \mu}, \label{main_prop_eq_1}
\end{align}
where in the second inequality we used~(\ref{E:d-phi-q}).

From lemma \ref{L:lemma-aux-U} and the condition (A5) (see section \ref{assump_V_X_etc}), we get
\begin{align*}
\vert d Q\vert &\leq \frac{1}{4}\vert d U_{\vep_0}\vert +
\vert d V\vert \\
&\leq \frac{\vep_0}{4}U_{{\vep}_0}^{3/2} + \gamma V^{3/2} + \beta_3 \\
&\leq (\gamma + 2\vep_0)Q^{3/2} + \beta_3.
\end{align*}
The above estimate puts us in a position to use lemma \ref{more_coer_est_3} with
$Q =U$. Doing so we find that for every $\tau$ satisfying $0<\tau<\vep_2$, with $\vep_2$ as in lemma~\ref{more_coer_est_3}, there exists a constant $\widetilde{C}_{\tau}$ such that
\begin{equation}
\vert\vert Q^{1/2}\vert d(e^{-\phi/p}u)\vert\vert_{p} \leq
\tau\vert\vert\Delta(e^{-\phi/p}u)\vert\vert_p + \widetilde{C}_{\tau}
\vert\vert Qu\vert\vert_{p, \mu}, \label{main_prop_eq_2}
\end{equation}
for all $u\in\mcomp$.

Using the rules (p4) and (c2) and writing $2p^{-1}=(2-p+p)p^{-1}$, we obtain
\begin{align}
\vert\vert \Delta(e^{-\phi/p}u)\vert\vert_{p} &=
\vert\vert \Delta u +\frac{2}{p}\langle d\phi , d u\rangle -\frac{1}{p^2}\vert d\phi\vert^2u - \frac{1}{p}(\Delta\phi)u\vert\vert_{p, \mu}\nonumber\\
&\leq \vert\vert \Delta u + \langle d\phi , d u\rangle -
\langle X, d u\rangle + Vu\vert\vert_{p, \mu} +
\frac{\vert p -2\vert}{p}\vert\vert \langle d\phi , du\rangle
\vert\vert_{p, \mu} \nonumber \\
&\hspace{0.5cm}+
\vert\vert \langle X, d u\rangle\vert\vert_{p, \mu} +
\vert\vert Vu\vert\vert_{p, \mu} + \frac{1}{p^2}\vert\vert \vert d\phi\vert^2
u\vert\vert_{p, \mu} + \frac{1}{p}\vert\vert(\Delta\phi)u\vert\vert_{p, \mu}. \label{main_prop_eq_3}
\end{align}

Referring to~(\ref{E:d-phi-q}) and $(A4)$, we get
\begin{align}
\frac{\vert p - 2\vert}{p}\vert\vert\langle d\phi, du\rangle\vert\vert_{p,\mu} +
\vert\vert \langle X, d u\rangle\vert\vert_{p, \mu} &\leq
\vert\vert \vert d\phi\vert\vert d u\vert\vert\vert_{p,\mu} +
\kappa\vert\vert(\vert d\phi\vert^2 + V + \beta_2)^{1/2}\vert d u\vert
\vert\vert_{p,\mu} \nonumber \\
&\leq (1 + \kappa)\vert\vert Q^{1/2}\vert d u\vert\vert\vert_{p,\mu}. \label{main_prop_eq_4}
\end{align}

Furthermore,
\begin{align}\label{main_prop_eq_5}
&\vert\vert Vu\vert\vert_{p, \mu} + \frac{1}{p^2}\vert\vert \vert d\phi\vert^2
u\vert\vert_{p, \mu} + \frac{1}{p}\vert\vert(\Delta\phi)u\vert\vert_{p, \mu}\nonumber\\
&\leq
\bigg{(} 1 + \frac{1}{p^2}\bigg{)}\vert\vert(V + \vert d\phi\vert^2)u
\vert\vert_{p,\mu} + \frac{\sqrt{n}}{p}\vert\vert(\vep_0\vert d\phi\vert^2 +
C_{\vep_0})u\vert\vert_{p, \mu} \nonumber\\
&\leq \bigg{(}
1 + \frac{1}{p^2} + \frac{\sqrt{n}\vep_0}{p}
\bigg{)}\vert\vert Qu\vert\vert_{p,\mu},
\end{align}
where in the first inequality we used~(\ref{E:lap-hess}) and (A2), and in the second inequality we used~(\ref{E:l-u-14}), the definition~(\ref{E:def-Q}), and the property~(\ref{E:d-phi-q}).

Estimates (\ref{main_prop_eq_3}), (\ref{main_prop_eq_4}) and (\ref{main_prop_eq_5})
lead to
\begin{equation}\label{main_prop_eq_6}
\vert\vert \Delta(e^{-\phi/p}u)\vert\vert_{p} \leq
\vert\vert \hvmaxp u\vert\vert_{p, \mu} +
(1 + \kappa)\vert\vert Q^{1/2}\vert d u\vert\vert\vert_{p, \mu} +
\bigg{(}
1 + \frac{1}{p^2} + \frac{\sqrt{n}\vep_0}{p}
\bigg{)}\vert\vert Qu\vert\vert_{p,\mu}.
\end{equation}

As a consequence of (\ref{main_prop_eq_1}), (\ref{main_prop_eq_2}) and (\ref{main_prop_eq_6}), for every $\tau$ such that $0<\tau<\vep_2$,
we have
\begin{align}
\vert\vert Q^{1/2}\vert d u\vert\vert\vert_{p,\mu} &\leq
\tau\vert\vert\Delta(e^{-\phi/p}u)\vert\vert_{p} + \bigg{(}
\widetilde{C}_{\tau} + \frac{1}{p}\bigg{)}\vert\vert Qu\vert\vert_{p,\mu} \nonumber \\
&\leq \tau\vert\vert \hvmaxp u\vert\vert_{p,\mu} +
(1 + \kappa)\tau\vert\vert Q^{1/2}\vert d u\vert\vert\vert_{p, \mu} +
K_{\tau}\vert\vert Qu\vert\vert_{p,\mu}, \label{main_prop_eq_7}
\end{align}
for all $u\in\mcomp$,
where
\begin{equation*}
K_{\tau} = \widetilde{C}_{\tau} + \frac{1}{p} + \tau\bigg{(}
1 + \frac{1}{p^2} + \frac{\sqrt{n}\vep_0}{p}\bigg{)}.
\end{equation*}

Going back to~(\ref{E:def-Q}) and referring to~(\ref{E:uep}), lemma \ref{L:lemma-u-hmax}, and part (ii) of
theorem \ref{T:main-1} (with $V$ playing the role of $h$), we obtain
\begin{align}
\vert\vert Qu\vert\vert_{p,\mu} &\leq \frac{1}{4}
\vert\vert U_{\vep_0}u\vert\vert_{p,\mu} + \vert\vert Vu\vert\vert_{p,\mu}
+ (\beta_2 + 1)\vert\vert u\vert\vert_{p, \mu} \nonumber \\
&\leq \bigg{(} \frac{1}{4} + \frac{(\beta_2 + 1)\vep_0)}{4C_{\vep_0}}
\bigg{)}\vert\vert U_{\vep_0}u\vert\vert_{p,\mu} + \vert\vert Vu\vert\vert_{p,\mu} \nonumber \\
&\leq \widetilde{K}\vert\vert\lambda_1u + \hvmaxp u\vert\vert_{p,\mu}, \label{main_prop_eq_8}
\end{align}
where $\lambda_1 > 0$ is the constant determined in the statement of theorem
\ref{T:main-1} (ii) and
\begin{equation*}
\widetilde{K} := \bigg{(} 1 + \frac{(\beta_2 + 1)\vep_0)}{C_{\vep_0}}
\bigg{)}\frac{2p^2}{p-1} + (1 + p\kappa\gamma)\bigg{(}
1 - \frac{\theta}{p} - (p-1)\gamma\bigg{(}\frac{\kappa}{p} + \frac{\gamma}{4}
\bigg{)}
\bigg{)}^{-1}.
\end{equation*}
Using the property $Q \geq 1$ and writing $\hvmaxp u=(\hvmaxp +\lambda_1)u-\lambda_1u$, it follows from (\ref{main_prop_eq_7}) and
(\ref{main_prop_eq_8}) that
\begin{align}
(1- (1 + \kappa)\tau)\vert\vert Q^{1/2}\vert d u\vert\vert\vert_{p,\mu}
&\leq K_{\tau}\widetilde{K}\vert\vert\lambda_1u + \hvmaxp u\vert\vert_{p,\mu} + \tau\vert\vert \hvmaxp u\vert\vert_{p,\mu}\nonumber \\
&\leq
(\tau + K_{\tau}\widetilde{K})
\vert\vert\lambda_1u + \hvmaxp u\vert\vert_{p,\mu} +
\lambda_1\tau\vert\vert u\vert\vert_{p,\mu} \nonumber \\
&\leq
(\tau + K_{\tau}\widetilde{K})
\vert\vert\lambda_1u + \hvmaxp u\vert\vert_{p,\mu} +
\lambda_1\tau\vert\vert Qu\vert\vert_{p,\mu} \nonumber \\
&\leq
(\tau + K_{\tau}\widetilde{K} + \lambda_1\tau \widetilde{K})
\vert\vert\lambda_1u + \hvmaxp u\vert\vert_{p,\mu}. \label{main_prop_eq_9}
\end{align}
Choosing $\tau > 0$ sufficiently small so that $0<\tau<\vep_2$ and $(1+\kappa)\tau<1$, we
obtain the inequality (\ref{more_coer_est_1_1}), with
$\widetilde{C}' := \frac{(\tau + K_{\tau}\widetilde{K} + \lambda_1\tau \widetilde{K})}{(1- (1 + \kappa)\tau)}$.

We now move on to verify the inequality (\ref{more_coer_est_1_2}).

According to theorem 4.11 in~\cite{GP-2015}, the hypotheses $\|\ricc_{M}\|_{\infty}<\infty$ and $r_{\textrm{inj}}(M) > 0$, guarantee the fulfillment of the $L^p$-Calder\'on--Zygmund Inequality
$(CZ(p))$, $1<p<\infty$: there exists a constant
$A = A(n, p, \vert\vert \ricc_{M}\vert\vert_{\infty}, r_{\textrm{inj}}(M)) > 0$ such that for every
$u \in C_c^{\infty}(M)$
\begin{equation}\label{more_coer_est_CZ}
\vert\vert \hess(u)\vert\vert_p \leq A(\vert\vert\Delta u\vert\vert_p +
\vert\vert u\vert\vert_p),
\end{equation}
where $\|\cdot\|_{p}$ is the usual $L^p$-norm (with the Riemannian volume measure $d\nu_{g}$).

Using the product and chain rule for the Hessian (see (p5) in section \ref{SS:pr} and
(c3) in section \ref{SS:cr}), we can write for any $u \in C_c^{\infty}(M)$
\begin{align*}
e^{-\phi/p}\hess(u) &= \hess(e^{-\phi/p}u) +\frac{2}{p}e^{-\phi/p}d\phi\otimes du -
\hess(e^{-\phi/p})u \\
&= \hess(e^{-\phi/p}u) +\frac{2}{p}e^{-\phi/p}d\phi\otimes du
- \bigg{(}\frac{1}{p^2}e^{-\phi/p}d\phi\otimes d\phi\bigg{)}u + \bigg{(}
\frac{1}{p}e^{-\phi/p}\hess(\phi)
\bigg{)}u.
\end{align*}
Estimating the last equation term by term, we obtain
\begin{align*}
\vert\vert \hess(u)\vert\vert_{p, \mu} &= \vert\vert e^{-\phi/p}\hess(u)\vert\vert_p \\
&\leq \vert\vert \hess(e^{-\phi/p}u)\vert\vert_p + \frac{2}{p}
\vert\vert \vert d\phi\vert\vert  du\vert\vert\vert_{p, \mu} + \frac{1}{p^2}
\vert\vert \vert d\phi\vert^2u\vert\vert_{p,\mu} + \frac{1}{p}
\vert\vert \hess(\phi)u\vert\vert_{p, \mu} \\
&\leq A\vert\vert\Delta(e^{-\phi/p}u)\vert\vert_{p} +
A\vert\vert u\vert\vert_{p, \mu} +
\frac{2}{p}
\vert\vert \vert d\phi\vert\vert  du\vert\vert\vert_{p, \mu} + \frac{1}{p^2}
\vert\vert \vert d\phi\vert^2u\vert\vert_{p,\mu} + \frac{1}{p}
\vert\vert \hess(\phi)u\vert\vert_{p, \mu}\\
&\leq
A\vert\vert \hvmaxp u\vert\vert_{p,\mu} + A\vert\vert u\vert\vert_{p,\mu}
+ \bigg{(}A(1 + \kappa)+ \frac{2}{p}\bigg{)}
\vert\vert Q^{1/2}|d u|\vert\vert_{p,\mu} \\
&\hspace{0.5cm}
+ \bigg{(}
A\bigg{(}1 + \frac{1}{p^2} + \frac{\sqrt{n}\vep_0}{p}\bigg{)} + \frac{\vep_0}{p} +
\frac{1}{p^2}
\bigg{)}\vert\vert Qu\vert\vert_{p,\mu}.
\end{align*}
Here, to the get the second inequality we used~(\ref{more_coer_est_CZ}). In the third inequality, for the term containing $\Delta$ we used~(\ref{main_prop_eq_6}), for the terms containing $|d\phi|$ and $|d\phi|^2$ we used~(\ref{E:d-phi-q}), and for the term containing $\hess(\phi)$ we used the property
\begin{equation*}
p^{-1}\|\hess(\phi) u\|_{p,\mu}\leq p^{-1}\vep_0\|Q u\|_{p,\mu},
\end{equation*}
which is a consequence of~(A2),~(\ref{E:def-Q}) and~(\ref{E:uep}).

Finally, writing $\hvmaxp u = \hvmaxp u + \lambda_1u -\lambda_1u$ we have
\begin{equation*}
\vert\vert \hvmaxp u\vert\vert_{p,\mu} \leq
\vert\vert \hvmaxp u + \lambda_1u\vert\vert_{p,\mu}
+ \lambda_1\vert\vert u\vert\vert_{p,\mu}.
\end{equation*}
Furthermore, using
$\vert\vert u\vert\vert_{p,\mu} \leq \vert\vert Qu\vert\vert_{p, \mu}$ (recall that $Q\geq 1$), it follows
from \eqref{main_prop_eq_8}, \eqref{main_prop_eq_9} that
\begin{equation*}
\vert\vert \hess(u)\vert\vert_{p,\mu} \leq \widetilde{C}''
\vert\vert \lambda_1u + \hvmaxp u\vert\vert_{p,\mu},
\end{equation*}
for all $u\in\mcomp$, where
\begin{equation*}
\widetilde{C}'' = A + A(1 + \lambda_1)\widetilde{K} + \bigg{(}
A(1+ \kappa) + \frac{2}{p}\bigg{)}\widetilde{C}' + \bigg{(}
A\bigg{(}1 + \frac{1}{p^2} + \frac{\sqrt{n}\vep_0}{p}\bigg{)} + \frac{\vep_0}{p} +
\frac{1}{p^2}
\bigg{)}\widetilde{K}.
\end{equation*}
This proves inequality \eqref{more_coer_est_1_2} of the proposition.

\end{proof}

\section{Proof of theorem ~\ref{T:main-2}}\label{S:pf-thm-2}

\subsection{Preliminary lemmas}
In this subsection $(M, g)$ is a Riemannian manifold without boundary (with additional geometric conditions in some statements).
We start with the definition of weighted Sobolev space (of differential order 1) on one-forms:
\begin{equation}\label{sob-one-form}
W^{1,p}_{\mu}(\Lambda^1T^*M):=\left\{u\in L^p_{\mu}\colon \nabla^{lc}u\in  L^p_{\mu}\right\},
\end{equation}
where $\nabla^{lc}$ is as in~(\ref{E:cov-lc}) and $\mu$ is as in~(\ref{E:meas-mu}).

We now list a few preliminary lemmas which will be important in the proof of
part (ii) of theorem \ref{T:main-2}. We begin with a density lemma.

\begin{lemma}\label{main-2_density} Assume that $(M,g)$ admits a sequence of cut-off functions satisfying the properties (i), (ii), (iii) in part (H3) of remark~\ref{R:HYP-H}. Then, the following hold:
\begin{itemize}
  \item [(i)] The space of smooth compactly supported one-forms $\Omega^1_{c}(M)$ is dense in $W^{1,p}_{\mu}(\Lambda^1T^*M)$, $1<p<\infty$.
  \item [(ii)] If, in addition, the property (iv) in part (H3) of remark~\ref{R:HYP-H} is satisfied (that is, if $M$ admits a sequence of weak Hessian cut-off functions), then $C_c^{\infty}(M)$ is dense in $W^{2,p}_{\mu}(M)$, $1<p<\infty$, where $W^{2,p}_{\mu}(M)$ is as in~(\ref{E:sob-mu}).
\end{itemize}
\end{lemma}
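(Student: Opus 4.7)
The strategy in both parts is the standard two-step scheme: first truncate $u$ by the cut-off sequence $\{\psi_k\}$ from (H3) to obtain compactly supported approximants that still lie in the relevant weighted Sobolev space, then mollify these truncations in coordinate charts to produce smooth compactly supported approximants. The hypotheses in (H3) are designed precisely for step one: property (iii) controls the error made when one derivative falls on $\psi_k$, while property (iv) controls the error when two derivatives do, which is why (iv) enters only in part (ii). The weight $e^{-\phi}$, being smooth and hence bounded from above and below on compacta, plays no role beyond an inessential local equivalence of $L^p$-norms with their unweighted counterparts.

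For part (i), set $u_k:=\psi_k u$, which lies in $W^{1,p}_{\mu}(\Lambda^1 T^*M)$ and has compact support by (i)--(ii) of (H3). The Leibniz rule gives $\nabla^{lc}(\psi_k u)=\psi_k\nabla^{lc}u + d\psi_k\otimes u$. The convergences $\psi_k u\to u$ and $\psi_k\nabla^{lc}u\to \nabla^{lc}u$ in $L^p_\mu$ are immediate from dominated convergence, using $0\leq \psi_k\leq 1$ and the pointwise limit $\psi_k\to 1$ supplied by property (ii). For the error term $d\psi_k\otimes u$, property (ii) ensures that for each $x\in M$ we have $d\psi_k(x)=0$ for all sufficiently large $k$; combined with the uniform bound $|d\psi_k|\leq C_1$ from (iii) and the integrability of $|u|^p$, dominated convergence yields $\|d\psi_k\otimes u\|_{p,\mu}\to 0$. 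Hence $u_k\to u$ in $W^{1,p}_\mu(\Lambda^1T^*M)$.

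For part (ii), again set $u_k:=\psi_k u$. Using the product rules (p1) and (p5) from section~\ref{SS:pr}, we have
\begin{align*}
d(\psi_k u) &= \psi_k\,du + u\,d\psi_k,\\
\hess(\psi_k u) &= \psi_k\hess u + 2\,d\psi_k\otimes du + u\,\hess\psi_k.
\end{align*}
The same dominated-convergence argument as above disposes of $\psi_k u\to u$ and $d(u_k)\to du$ in $L^p_\mu$. For the Hessian, the principal term converges to $\hess u$, and the two cross terms vanish in $L^p_\mu$ as $k\to\infty$: property (iii) furnishes the dominant $C_1|du|$ for $2\,d\psi_k\otimes du$, and the eventual-pointwise-vanishing of $d\psi_k$ (from (ii)) delivers the limit; property (iv) plays the analogous role for $u\,\hess\psi_k$ with dominant $C_2|u|$. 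This is the only place where (iv) is invoked, which is precisely why it does not appear in the hypotheses of part (i).

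It remains to pass from compactly supported elements of the weighted Sobolev space to smooth compactly supported ones. Choosing a finite partition of unity $\{\eta_j\}$ subordinate to a cover of $\supp(u_k)$ by coordinate charts reduces the problem to approximating a compactly supported $W^{1,p}$ (respectively $W^{2,p}$) function or one-form on $\RR^n$ by smooth compactly supported ones, which is accomplished by standard Friedrichs mollification. Summing up the pullbacks and using that on $\supp(u_k)$ the metric components $g_{ij}$, $g^{ij}$, the Christoffel symbols entering into $\hess$, and the weight $e^{-\phi}$ are all smooth and bounded above and below, we obtain an element of $\Omega^1_c(M)$ (respectively $\mcomp$) whose distance in $W^{1,p}_\mu$ (respectively $W^{2,p}_\mu$) from $u_k$ is arbitrarily small. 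The mild technical point, and the one to watch most carefully, is ensuring that the Euclidean $W^{2,p}$-norm in a chart is equivalent, on the fixed compact set $\supp(u_k)$, to the intrinsic norm built from $du$ and $\hess u$; this equivalence reduces second-order convergence in charts to convergence in the weighted geometric norm, and a routine covering argument handles it without any further geometric hypotheses on $M$.
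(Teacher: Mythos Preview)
Your proof is correct and follows the standard truncate--then--mollify route. The paper's own argument proceeds in the reverse order: it first invokes a Meyers--Serrin type result (theorem~1 of~\cite{GGP-2015}) to reduce to \emph{smooth} elements of $W^{2,p}_\mu(M)$ (respectively $W^{1,p}_\mu(\Lambda^1T^*M)$), and only then multiplies by the cut-offs $\psi_k$; this lets the paper avoid the chartwise mollification altogether, at the cost of citing an external density theorem. Your approach is more self-contained but requires carrying out the mollification step by hand---the norm-equivalence observation you flag (that on the fixed compact $\supp(u_k)$ the intrinsic norm built from $du$, $\hess u$ and the weight $e^{-\phi}$ is equivalent to the Euclidean $W^{2,p}$-norm in charts) is indeed the content of the Meyers--Serrin argument the paper is citing. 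Either order works; the truncation estimates in both versions are identical, hinging on the eventual pointwise vanishing of $d\psi_k$ and $\hess\psi_k$ together with their uniform bounds from~(H3).
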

\begin{proof} We first prove part (ii) of the lemma. By theorem 1 in~\cite{GGP-2015} we have that $C^{\infty}(M) \cap W^{2,p}_{\mu}(M)$ is
dense in $W^{2,p}_{\mu}(M)$. (The theorem is applicable because the inclusion $W^{2,p}_{\loc}\subset W^{1,p}_{\loc}$ tells us that regularity $W^{1,p}_{\loc}$, as required by the quoted theorem, is built into the definition~(\ref{E:sob-mu}) of $W^{2,p}_{\mu}(M)$.) Therefore, it suffices to prove that given a function
$f \in C^{\infty}(M) \cap W^{2,p}_{\mu}(M)$, we can approximate it by a sequence
$(f_k) \in C_c^{\infty}(M)$ in $W^{2,p}_{\mu}(M)$-norm.

Denote by $(\psi_k)$ a sequence of weak Hessian cut-off functions on $M$ and define $f_k := \psi_k f$. The desired approximation property follows from the assumption $f \in W^{2,p}_{\mu}(M)$, the formulas
\begin{equation*}
df_k=\psi_k df+fd\psi_k,\qquad \hess(f_k) = f\hess(\psi_k) + 2d\psi_k\otimes df + \psi_k\hess(f),
\end{equation*}
and the following observations: (i) the elements $(1-\psi_k)$, $d\psi_k$ and $\hess(\psi_k)$ are uniformly bounded and supported in $\supp(1-\psi_k)$; (ii) in view of property (ii) in (H3), for any compact set $S\subset M$, we have that $\supp(1-\psi_k)\subset (M\backslash S)$ for a sufficiently large $k$. For more details about this argument, see remark 2.5 in~\cite{IRV-19}.

For part (i) we start from $w \in\Omega^1(M) \cap W^{1,p}_{\mu}(\Lambda^1T^*M)$. Keeping in mind the density of the latter intersection in $W^{1,p}_{\mu}(\Lambda^1T^*M)$ (again referring to theorem 1 in~\cite{GGP-2015}), we carry out the proof by defining $w_k := \psi_k w$, where $\psi_k$ satisfies the properties (i)--(iii) in (H3), and using the formula
\begin{equation*}
\nabla^{lc}\omega_k=d\psi_k\otimes w+\psi_k\nabla^{lc}w,
\end{equation*}
together with the same argument as in part (ii) of this lemma.
\end{proof}

\begin{remark}\label{R:weak-hes-density} As mentioned in remark~\ref{R:HYP-H}, the assumptions
$r_{\textrm{inj}}(M)>0$ and $\|\ricc_{M}\|_{\infty}<\infty$ ensure that $M$ admits a sequence of genuine Hessian cut-off functions (and, therefore, weak Hessian cut-off functions).
\end{remark}

The next two lemmas are the analogues of lemmas 7.1 and 7.2 of~\cite{MPR-05}.

\begin{lemma}\label{main-2_bound_1} Assume that $(M,g)$ admits a sequence of cut-off functions satisfying the properties (i), (ii), (iii) in part (H3) of remark~\ref{R:HYP-H}. Let $\phi$ be as in~(\ref{E:meas-mu}). Then, the following hold:
\begin{itemize}
  \item [(i)] The map $u \mapsto \vert d\phi \vert u$ is bounded from $W^{1,p}_{\mu}(\Lambda^1T^*M)$ to $L^p_{\mu}(\Lambda^1T^*M)$, $1<p<\infty$.
  \item [(ii)] The map $v \mapsto \vert d\phi \vert v$ is bounded from $W^{1,p}_{\mu}(M)$ to $L^p_{\mu}(M)$, $1<p<\infty$.
\end{itemize}
\end{lemma}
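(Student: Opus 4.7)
The plan is to prove both parts by duality/integration by parts on a dense subspace, then extend by continuity. For part (ii), I would first establish the pointwise estimate $\||d\phi|v\|_{p,\mu} \leq C(\|v\|_{p,\mu} + \|dv\|_{p,\mu})$ for all $v \in C_c^{\infty}(M)$, and then invoke the density of $C_c^{\infty}(M)$ in $W^{1,p}_{\mu}(M)$ (which follows from the analogue of lemma~\ref{main-2_density} using just properties (i)--(iii) of (H3) on weak gradient cut-off functions, without the Hessian bound, exactly as in the proof of part (i) of that lemma, applied to functions instead of forms). For part (i), the identical scheme works for a one-form $u \in \Omega^1_c(M)$: establish the $C_c^{\infty}$-estimate and use lemma~\ref{main-2_density}(i).

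The key computation for part (ii) is the integration-by-parts trick. For $v \in C_c^{\infty}(M)$ with $p \geq 2$, write the identity $|d\phi|^2 e^{-\phi} = -\langle d\phi, d(e^{-\phi})\rangle$ and insert it into the target norm:
\begin{equation*}
\int_M |d\phi|^p |v|^p \, d\mu = -\int_M |d\phi|^{p-2} |v|^p \langle d\phi, d(e^{-\phi})\rangle\, d\nu_g = \int_M d^{\dagger}\!\left(|d\phi|^{p-2}|v|^p d\phi\right) e^{-\phi} d\nu_g,
\end{equation*}
using product rule (p2) and compact support. Expanding $d^{\dagger}(|d\phi|^{p-2}|v|^p d\phi)$ produces three terms: $|d\phi|^{p-2}|v|^p\, \Delta\phi$, a term bounded pointwise by $(p-2)|d\phi|^{p-2}|\hess\phi||v|^p$ (using $|d|d\phi|| \leq |\hess\phi|$), and a term bounded by $p\,|d\phi|^{p-1}|v|^{p-1}|dv|$. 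Now apply (A2) and~(\ref{E:lap-hess}) to dominate both $|\Delta\phi|$ and $|\hess\phi|$ by $\varepsilon|d\phi|^2 + C_\varepsilon$, obtaining
\begin{equation*}
\int_M |d\phi|^p |v|^p d\mu \leq K_1\varepsilon \int_M |d\phi|^p |v|^p d\mu + K_2(\varepsilon)\int_M |d\phi|^{p-2}|v|^p d\mu + K_3 \int_M |d\phi|^{p-1}|v|^{p-1}|dv|\, d\mu.
\end{equation*}
Choose $\varepsilon$ small so that $K_1\varepsilon < 1/2$ and absorb the leading term into the left-hand side. Then apply Young's inequality (n1) twice, with exponents $(p/(p-2), p/2)$ and $(p/(p-1), p)$ respectively, to bound the remaining two terms by $\delta\int |d\phi|^p|v|^p d\mu + C_\delta (\|v\|_{p,\mu}^p + \|dv\|_{p,\mu}^p)$; absorb again to obtain the desired estimate.

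The only subtlety is the case $1<p<2$, where $|d\phi|^{p-2}$ is singular on the zero set of $d\phi$ and $d|v|^p$ is not classically defined where $v$ vanishes. I would handle this by the usual regularization, replacing $|d\phi|^{p-2}$ with $(|d\phi|^2+\delta)^{(p-2)/2}$ and $|v|^p$ with $(|v|^2+\delta)^{p/2}$ (these are smooth on $M$), running the same integration by parts, and letting $\delta\to 0^+$ via dominated convergence once the final $\delta$-uniform bound is established. For part (i), the argument is structurally identical, with $\nabla^{lc}$ playing the role of $d$ and $(\nabla^{lc})^{\dagger}$ the role of $d^{\dagger}$; the pointwise estimate $|\nabla^{lc}|u||\leq |\nabla^{lc}u|$ (Kato-type inequality) takes the place of $|d|v||\leq |dv|$ in the Young-inequality step. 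The main obstacle is bookkeeping—tracking the correct powers and making the absorption work simultaneously against the $|d\phi|^p|v|^p$ term and the $|d\phi|^p|u|^p$ term, while ensuring the regularization passes to the limit uniformly—rather than any conceptual difficulty.
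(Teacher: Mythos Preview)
Your approach is correct and follows the same integration-by-parts strategy as the paper: exploit the identity $|d\phi|^2 e^{-\phi}=-\langle d\phi, d(e^{-\phi})\rangle$, integrate by parts against a power-type weight, control the resulting $\Delta\phi$ and $\hess\phi$ terms via (A2), and absorb. The one technical difference is that the paper uses the weight $(1+|d\phi|^2)^{p/2-1}$ in place of your $|d\phi|^{p-2}$; because $1+|d\phi|^2\geq 1$, this weight is smooth for every $p\in(1,\infty)$, so the integration by parts is immediately legitimate and no $\delta$-regularization is needed in any range of $p$. The paper then passes from $\int (1+|d\phi|^2)^{p/2-1}|d\phi|^2|u|^p\,d\mu$ back to $\||d\phi|u\|_{p,\mu}^p$ via the elementary inequality $t^p\leq a(1+t^2)^{p/2-1}t^2+b$. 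Your direct route with $|d\phi|^{p-2}$ is equivalent in spirit, but note that the weak differentiation of $|d\phi|^{p-2}$ at points where $d\phi=0$ is delicate not only for $1<p<2$ but also for $2<p<3$; your proposed $(|d\phi|^2+\delta)^{(p-2)/2}$ regularization covers those cases as well, so the argument goes through, but the paper's smoothed weight buys a uniform, regularization-free treatment.
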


\begin{proof} We will prove part (i) of this lemma, with the remark that part (ii) is justified in the same way if we replace $\nabla^{lc}u$ by $dv$.

Using part (i) of lemma \ref{main-2_density} it suffices to prove
\begin{equation*}
\vert\vert |d\phi| u\vert\vert_{p, \mu} \leq C\bigg{(}\vert\vert \nabla^{lc} u\vert\vert_{p, \mu}
+ \vert\vert u\vert\vert_{p, \mu}\bigg{)},
\end{equation*}
for all $u \in \Omega^1_{c}(M)$, where $C > 0$ is a constant (independent of $u$).

Observe that there exist constants $a$, $b > 0$ such that
\begin{equation}\label{E:rem-a-b-1}
\vert d\phi\vert^p \leq a(1+ \vert d\phi\vert^2)^{p/2 - 1}\vert d\phi\vert^2 + b.
\end{equation}
Thus, remembering~(\ref{E:meas-mu}) and $d(e^{-\phi})=-e^{-\phi}d\phi$, it suffices for us to estimate
\begin{equation*}
\int_M(1+ \vert d\phi\vert^2)^{p/2 - 1}\vert d\phi\vert^2\vert u\vert^pe^{-\phi}\,d\nu_{g}=-\int_M\vert u\vert^p(1+ \vert d\phi\vert^2)^{p/2 - 1}
\langle d\phi, d(e^{-\phi})\rangle\,d\nu_{g}.
\end{equation*}

As $u$ has compact support, we use integration by parts to get
\begin{align*}
&-\int_M\vert u\vert^p(1+ \vert d\phi\vert^2)^{p/2 - 1}
\langle d\phi, d(e^{-\phi})\rangle
d\nu_{g} =
-\int_M\vert u\vert^p(1+ \vert d\phi\vert^2)^{p/2 - 1}\Delta\phi\,d\mu \\
&+
(p-2)\int_M\vert u\vert^p(1+ \vert d\phi\vert^2)^{p/2 - 2}
\langle (\hess\phi)(\bullet, (d\phi)^{\sharp}), d\phi\rangle\,d\mu \\
&+
p\int_M\vert u\vert^{p-2}(1+ \vert d\phi\vert^2)^{p/2 - 1}
\langle d\phi, \omega_{u}\rangle\,d\mu,
\end{align*}
where $\omega_{u}$ is as in~(\ref{E:omega-x}) with $\vbn =\Lambda^1T^*M$. Here, the term containing $\hess$ was handled as in~(\ref{E:similar-calc-8}) and the term $d|u|^p$ was handled using
lemma~\ref{L:om-sig} (i).

Rewriting and using the inequality $|\langle d\phi, \omega_{u}\rangle|\leq |d\phi||\nabla^{lc} u||u|$, we get
\begin{align*}
&-\int_M\vert u\vert^p(1+ \vert d\phi\vert^2)^{p/2 - 1}
\langle d\phi, d(e^{-\phi})\rangle
d\nu_{g} \\
&=\int_M\bigg{(}
\vert u\vert^p(1+ \vert d\phi\vert^2)^{p/2 - 1}\bigg{)}\bigg{(}
-\Delta\phi+ (p-2)(1+ \vert d\phi\vert^2)^{-1}
\langle (\hess\phi)(\bullet, (d\phi)^{\sharp}), d\phi\rangle \bigg{)}d\mu \\
&+
p\int_M \vert u\vert^{p-2}(1+ \vert d\phi\vert^2)^{p/2 - 1}
\langle d\phi, \omega_{u}\rangle d\mu \\
&\leq
\int_M\bigg{(}
\vert u\vert^p(1+ \vert d\phi\vert^2)^{p/2 - 1}\bigg{)}\bigg{(}
-\Delta\phi + (p-2)(1+ \vert d\phi\vert^2)^{-1}
\langle (\hess\phi)(\bullet, (d\phi)^{\sharp}), d\phi\rangle \bigg{)}d\mu \\
&+
p\int_M\vert u\vert^{p-1}(1+ \vert d\phi\vert^2)^{p/2 - 1}
\vert d\phi\vert\vert\nabla^{lc} u\vert d\mu.
\end{align*}

We also have
\begin{align*}
&-\Delta\phi
+ (p-2)(1+ \vert d\phi\vert^2)^{-1}\langle (\hess\phi)(\bullet, (d\phi)^{\sharp}), d\phi\rangle
\\
&\leq
|\Delta\phi|
+ |p-2|(1+ \vert d\phi\vert^2)^{-1}
\vert\langle (\hess\phi)(\bullet, (d\phi)^{\sharp}), d\phi\rangle\vert \\
&\leq
|\Delta\phi|
+ |p-2|(1+ \vert d\phi\vert^2)^{-1}
\vert \hess(\phi)\vert\vert d\phi\vert^2
\leq
\sqrt{n}\vert \hess(\phi)\vert + |p-2|\vert \hess(\phi)\vert \\
&\leq
(\sqrt{n} + |p-2|)\vep\vert d\phi\vert^2 + (\sqrt{n} + |p-2|)C_{\vep},
 \end{align*}
for all $\vep>0$, where in the last inequality we used the hypothesis (A2) in section~\ref{assump_V_X_etc}.

Combining the previous two estimates we get
\begin{align*}
&\int_M(1+ \vert d\phi\vert^2)^{p/2 - 1}\vert d\phi\vert^2\vert u\vert^p\,d\mu
\leq
(\sqrt{n} + |p-2|))\vep\int_M\bigg{(}
\vert u\vert^p(1+ \vert d\phi\vert^2)^{p/2 - 1}\bigg{)}
\vert d\phi\vert^2\,d\mu \\
&+
(\sqrt{n} + |p-2|)C_{\vep}\int_M\bigg{(}
\vert u\vert^p(1+ \vert d\phi\vert^2)^{p/2 - 1}\bigg{)}\,d\mu \\
&+
p\int_M\vert u\vert^{p-1}(1+ \vert d\phi\vert^2)^{p/2 - 1}
\vert d\phi\vert\vert\nabla^{lc}u\vert\, d\mu,
\end{align*}
which implies
\begin{align*}
&\bigg{(}
1 - (\sqrt{n} + |p-2|)\vep
\bigg{)}
\int_M\bigg{(}
\vert u\vert^p(1+ \vert d\phi\vert^2)^{p/2 - 1}\bigg{)}
\vert d\phi\vert^2\,d\mu \\
&\leq (\sqrt{n} + |p-2|)C_{\vep}\int_M\bigg{(}
\vert u\vert^p(1+ \vert d\phi\vert^2)^{p/2 - 1}\bigg{)}d\mu + p\int_M\vert u\vert^{p-1}(1+ \vert d\phi\vert^2)^{p/2 - 1}
\vert d\phi\vert\vert\nabla^{lc}u\vert\, d\mu.
\end{align*}

We then use the inequality
$(1 + t^2)^{p/2 - 1} \leq
\frac{1 - (\sqrt{n} + |p-2|)\vep}{2(\sqrt{n} + |p-2|)C_{\vep}}
(1 + t^2)^{p/2 - 1}t^2 + c$ for some $c > 0$ and H\"older's inequality to obtain
\begin{align*}
&\bigg{(}
\frac{1 - (\sqrt{n} + |p-2|)\vep}{2}
\bigg{)}
\int_M\bigg{(}
\vert u\vert^p(1+ \vert d\phi\vert^2)^{p/2 - 1}\bigg{)}
\vert d\phi\vert^2\,d\mu \\
&\leq p\bigg{(}\int_M\vert u\vert^{p}(1+ \vert d\phi\vert^2)^{p'(p/2 - 1)}
\vert d\phi\vert^{p'}d\mu\bigg{)}^{\frac{1}{p'}}
\bigg{(}\int_M \vert\nabla^{lc} u\vert^pd\mu\bigg{)}^{\frac{1}{p}} +
C\int_M\vert u\vert^p d\mu,
\end{align*}
where $p'$ is the conjugate exponent to $p$ and $C > 0$ is a constant.

In order to estimate the right hand side of the above inequality, we proceed as follows. First note that there exist constants $c_1$, $c_2 > 0$ such that
$(1 + t^2)^{p'(\frac{p}{2}-1)}t^{p'} \leq c_1(1 + t^2)^{p/2-1}t^2 + c_2$.
Using this inequality, we obtain
\begin{align*}
&\bigg{(}
\frac{1 - (\sqrt{n} + |p-2|)\vep}{2}
\bigg{)}
\int_M\bigg{(}
\vert u\vert^p(1+ \vert d\phi\vert^2)^{p/2 - 1}\bigg{)}
\vert d\phi\vert^2d\mu \\
&\leq
p\bigg{(}\int_M\bigg{(}c_1\vert u\vert^{p}(1+ \vert d\phi\vert^2)^{p/2 - 1}
\vert d\phi\vert^{2} + c_2\vert u\vert^p\bigg{)}d\mu\bigg{)}^{\frac{1}{p'}}
\bigg{(}\int_M \vert\nabla^{lc}u\vert^pd\mu\bigg{)}^{\frac{1}{p}} +C\int_M\vert u\vert^pd\mu.
\end{align*}
Using a weighted Young's inequality (see (n1) in section~\ref{SS:bin}), we have
\begin{align*}
&\bigg{(}
\frac{1 - (\sqrt{n} + |p-2|)\vep}{2}
\bigg{)}
\int_M\bigg{(}
\vert u\vert^p(1+ \vert d\phi\vert^2)^{p/2 - 1}\bigg{)}
\vert d\phi\vert^2d\mu \\
&\leq \frac{c_1\vep' p}{p'}
\int_M\bigg{(}\vert u\vert^{p}(1+ \vert d\phi\vert^2)^{p/2 - 1}
\vert d\phi\vert^{2}\bigg{)}d\mu +
\frac{c_2\vep' p}{p'}\int_M\vert u\vert^p\,d\mu +
\frac{1}{\vep'}\int_M\vert\nabla^{lc}u\vert^p d\mu
+
C\int_M\vert u\vert^pd\mu.
\end{align*}
Choosing $\vep$, $\vep' > 0$ sufficiently small so that $\bigg{(}
\frac{1 - (\sqrt{n} + |p-2|)\vep}{2} - \frac{c_1\vep' p}{p'}\bigg{)} > 0$
we get
\begin{align*}
&\bigg{(}
\frac{1 - (\sqrt{n} + |p-2|)\vep}{2} - \frac{c_1\vep' p}{p'}\bigg{)}
\int_M\bigg{(}
\vert u\vert^p(1+ \vert d\phi\vert^2)^{p/2 - 1}\bigg{)}
\vert d\phi\vert^2d\mu \\
&\leq \bigg{(} \frac{c_2\vep' p}{p'} + C \bigg{)}\int_M\vert u\vert^pd\mu +
\frac{1}{\vep'}\int_M\vert\nabla^{lc}u\vert^pd\mu.
\end{align*}

Remembering~(\ref{E:rem-a-b-1}), the last estimate proves that the map $u \mapsto \vert d\phi \vert u$ is a bounded operator from
$W^{1,p}_{\mu}(\Lambda^1T^*M)$ to $L^p_{\mu}(\Lambda^1T^*M)$.

\end{proof}

\begin{lemma}\label{main-2_bound_2} Assume that $(M,g)$ admits a sequence of weak Hessian cut-off functions.
Let $\phi$ be as in~(\ref{E:meas-mu}). Then the map $w \mapsto \vert d\phi\vert^2w$ is bounded
from $W^{2,p}_{\mu}(M)$ to $L^p_{\mu}(M)$.
\end{lemma}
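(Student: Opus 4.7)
The plan is to reduce to the dense subspace $\mcomp \subset W^{2,p}_{\mu}(M)$ via part (ii) of lemma~\ref{main-2_density}, and then to deduce the bound from lemma~\ref{main-2_bound_1} by applying part (i) of that lemma to the one-form $\omega := w\,d\phi$. The key observation is that $|d\phi|^{2}w$ is (up to a factor) the fiberwise norm of $|d\phi|\omega$, so once $\omega$ is shown to lie in $W^{1,p}_{\mu}(\Lambda^{1}T^*M)$ with a good estimate, the conclusion follows.

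To put $\omega$ into $W^{1,p}_{\mu}(\Lambda^{1}T^*M)$ quantitatively, I would first observe that $w \in W^{1,p}_{\mu}(M)$ (since $w, dw \in L^p_\mu$), so lemma~\ref{main-2_bound_1}(ii) gives $\|\omega\|_{p,\mu} = \||d\phi|w\|_{p,\mu} \leq C\|w\|_{W^{1,p}_{\mu}(M)}$. Next, using the product rule for $\nabla^{lc}$ (together with $\nabla^{lc}d\phi = \hess\phi$ and $\nabla^{lc}dw = \hess w$), we have
\begin{equation*}
\nabla^{lc}\omega \;=\; dw \otimes d\phi + w\,\hess\phi, \qquad |\nabla^{lc}\omega| \;\leq\; |dw||d\phi| + |w||\hess\phi|.
\end{equation*}
The first term is controlled by lemma~\ref{main-2_bound_1}(i) applied to the one-form $dw$, which belongs to $W^{1,p}_{\mu}(\Lambda^{1}T^*M)$ because $dw \in L^p_\mu$ and $\nabla^{lc}(dw) = \hess w \in L^p_\mu$; this yields $\||dw||d\phi|\|_{p,\mu} \leq C'\|w\|_{W^{2,p}_{\mu}(M)}$. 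The second term is estimated by invoking hypothesis (A2): for any $\vep > 0$,
\begin{equation*}
|w||\hess\phi| \;\leq\; \vep\,|d\phi|^{2}|w| + C_{\vep}|w|.
\end{equation*}

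Combining these bounds produces
\begin{equation*}
\|\omega\|_{W^{1,p}_{\mu}(\Lambda^{1}T^*M)} \;\leq\; C_{1}\|w\|_{W^{2,p}_{\mu}(M)} + \vep\,\||d\phi|^{2}w\|_{p,\mu} + C_{\vep}\|w\|_{p,\mu}.
\end{equation*}
Now apply lemma~\ref{main-2_bound_1}(i) to $\omega$ itself to conclude
\begin{equation*}
\||d\phi|^{2}w\|_{p,\mu} \;=\; \||d\phi|\,\omega\|_{p,\mu} \;\leq\; C_{2}\|\omega\|_{W^{1,p}_{\mu}(\Lambda^{1}T^*M)} \;\leq\; C_{3}\|w\|_{W^{2,p}_{\mu}(M)} + C_{2}\vep\,\||d\phi|^{2}w\|_{p,\mu} + C_{2}C_{\vep}\|w\|_{p,\mu}.
\end{equation*}
Since $w \in \mcomp$, the quantity $\||d\phi|^{2}w\|_{p,\mu}$ is \emph{a priori} finite, so we may choose $\vep$ so small that $C_{2}\vep < 1/2$ and absorb the corresponding term into the left-hand side, producing the desired bound.

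The main obstacle to this plan is the apparent circularity when controlling $|w||\hess\phi|$: hypothesis (A2) bounds $|\hess\phi|$ by $\vep|d\phi|^{2} + C_{\vep}$, which reintroduces precisely the quantity we are trying to bound. The whole argument works only because the constant $\vep$ in (A2) is at our disposal, so the coefficient in front of $\||d\phi|^{2}w\|_{p,\mu}$ can be made strictly less than $1$; without this freedom in (A2), the bootstrap would collapse. The density step from lemma~\ref{main-2_density}(ii), which ensures that the \emph{a priori} finiteness of $\||d\phi|^{2}w\|_{p,\mu}$ is legitimate on a dense subspace, is likewise essential for the absorption to be valid.
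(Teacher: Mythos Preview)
Your proof is correct and follows essentially the same approach as the paper: define $\omega = w\,d\phi$, apply lemma~\ref{main-2_bound_1}(i) to $\omega$, expand $\nabla^{lc}\omega = dw\otimes d\phi + w\,\hess\phi$, control $dw\otimes d\phi$ via lemma~\ref{main-2_bound_1}(i) applied to $dw$, control $w\,\hess\phi$ via (A2), absorb the resulting $\vep\||d\phi|^2 w\|_{p,\mu}$ term (using that $w\in\mcomp$ makes this finite), and finish by the density lemma~\ref{main-2_density}(ii). Your identification of the bootstrap role of the free $\vep$ in (A2) and of the density step is exactly on point.
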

\begin{proof}
Given $w \in \mcomp$, let us consider the one-form $v:=wd\phi \in W^{1,p}_{\mu}(\Lambda^1T^*M)$.
Keeping in mind that $|w|d\phi|^2|=|d\phi||v|$, we have
\begin{align}\label{E:sob-2-est-space}
&\vert\vert w\vert d\phi\vert^2\vert\vert_{L^p_{\mu}(M)}=\vert\vert |d\phi|v\vert\vert_{L^p_{\mu}(\Lambda^1T^*M)}\nonumber\\
&\leq \widetilde {C} \vert\vert v\vert\vert_{W^{1,p}_{\mu}(\Lambda^1T^*M)}\leq \widetilde {C} (\|v\|_{L^p_{\mu}(\Lambda^1T^*M)}+\|\nabla^{lc}v\|_{L^p_{\mu}(T^*M\otimes T^*M)})\nonumber\\
&=\widetilde {C}(\|w|d\phi|\|_{L^p_{\mu}(M)}+\|dw\otimes d\phi+w\hess \phi\|_{L^p_{\mu}(T^*M\otimes T^*M)})\nonumber\\
&\leq \widetilde {C}(\|w|d\phi|\|_{{L^p_{\mu}(M)}}+\|dw\otimes d\phi\|_{L^p_{\mu}(T^*M\otimes T^*M)}+
\|w\hess\phi\|_{L^p_{\mu}(T^*M\otimes T^*M)}),
\end{align}
where the first inequality follows by part (i) of lemma \ref{main-2_bound_1}, the second inequality follows by the definition of the Sobolev norm for $W^{1,p}_{\mu}(\Lambda^1T^*M)$, and the second equality follows from the formulas $|v|=|w|d\phi||$ and $\nabla^{lc}(wd\phi)=dw\otimes d\phi+w\hess \phi$. (Here, $\widetilde {C}$ is a constant possibly changing its value as we go from one estimate to another.)

Using part (ii) of lemma~\ref{main-2_bound_1}, we have
\begin{equation*}
\|w|d\phi|\|_{{L^p_{\mu}(M)}}\leq  C\|w\|_{W^{1,p}_{\mu}(M)}\leq  C\|w\|_{W^{2,p}_{\mu}(M)},
\end{equation*}
where $C$ is a constant.

Remembering that $|dw\otimes d\phi|\leq |dw||d\phi|=|dw|d\phi||$, we have
\begin{align*}
&\|dw\otimes d\phi\|_{L^p_{\mu}(T^*M\otimes T^*M)}\leq \|dw|d\phi|\|_{L^p_{\mu}(\Lambda^1T^*M)}
\leq C\|dw\|_{W^{1,p}_{\mu}(\Lambda^1T^*M)}\nonumber\\
&\leq C (\|dw\|_{L^p_{\mu}(\Lambda^1T^*M)}+\|\hess w\|_{L^p_{\mu}(T^*M\otimes T^*M)})\leq C\|w\|_{W^{2,p}_{\mu}(M)},
\end{align*}
where in the second inequality we used part (i) of lemma~\ref{main-2_bound_1} with $u=dw$ and in the third inequality we used the definition of the Sobolev norm for $W^{1,p}_{\mu}(\Lambda^1T^*M)$. (Here, $C$ is a constant possibly changing its value as we go from one estimate to another.)

Lastly, we use the condition (A2) in section~\ref{assump_V_X_etc} to get
\begin{align*}
&\|w\hess\phi\|_{L^p_{\mu}(T^*M\otimes T^*M)}\leq \vep\vert\vert w\vert d\phi\vert^2\vert\vert_{L^p_{\mu}(M)} +
C_{\vep}\vert\vert w\vert\vert_{L^p_{\mu}(M)}\\
&\leq \vep\vert\vert w\vert d\phi\vert^2\vert\vert_{L^p_{\mu}(M)} +
C_{\vep}\vert\vert w\vert\vert_{W^{2,p}_{\mu}(M)},
\end{align*}
for all $\vep>0$.

Combining the last three estimates with~(\ref{E:sob-2-est-space}) and making $\vep>0$ sufficiently small, we obtain (after rearranging)

\begin{equation*}
\|w|d\phi|^2\|_{{L^p_{\mu}(M)}}\leq  \widetilde{C}\|w\|_{W^{2,p}_{\mu}(M)},
\end{equation*}
for all $w\in\mcomp$, where $\widetilde{C}$ is a constant.

The boundedness of the map $w \mapsto \vert d\phi\vert^2w$ (from $W^{2,p}_{\mu}(M)$ to $L^p_{\mu}(M)$) follows since $\mcomp$ is dense in $W^{2,p}_{\mu}(M)$, as indicated in part (ii) of lemma~\ref{main-2_density}.
\end{proof}

\subsection{Proof of part (i) of Theorem \ref{T:main-2}}

By part (i) of theorem~\ref{T:main-1} (specialized to  the expression $P^d$ in~(\ref{E:def-H-d})) it suffices to prove that $\hvmaxp = H^{d}$.  Looking at the definitions of $\hvmaxp$ and $H^{d}$ in section~\ref{SS:min-max-rel-1} (specialized to the expression $P^d$ in~(\ref{E:def-H-d})) and section~\ref{SS:operator-H} respectively, we note that we can prove the equality of $\hvmaxp$ and $H^d$ by showing that $\dom(\hvmaxp) = \dom(H^d)$.

We start by observing that the definitions of $\hvmaxp$ and $H^d$ grant the inclusion $\dom(H^d) \subset \dom(\hvmaxp)$.
To see that $\dom(\hvmaxp) \subset \dom(H^d)$, start by picking an arbitrary
$u \in \dom(\hvmaxp)$. By part (i) of theorem~\ref{T:main-1}, we know that
$C_c^{\infty}(M)$ is a core for $\hvmaxp$. Thus, there exists a sequence
$(u_k) \in C_c^{\infty}(M)$ such that
\begin{equation}\label{E:conv-rel-8}
u_k \rightarrow u \text{ in } L^p_{\mu}(M) \text{ and }
P^du_k \rightarrow \hvmaxp u \text{ in } L^p_{\mu}(M).
\end{equation}
Using these convergence relations and the coercive estimate~(\ref{more_coer_est_1_1}), we infer that
$(d\phi)^{\#}u \in L^p_{\mu}(M)$ and $Xu \in L^p_{\mu}(M)$, where for the second inclusion we also used the condition (A4) from section~\ref{assump_V_X_etc} (with $V$ in the place of $h$). Similarly, using the estimate~(\ref{E:coercive-1}) (with $V$ in the place of $h$) and~(\ref{E:conv-rel-8}), we obtain $Vu \in L^{p}_{\mu}(M)$. As $P^du\in L^p_{\mu}(M)$ by the definition of $\hvmaxp$, the three inclusions from the last two sentences lead to $\Delta u\in  L^p_{\mu}(M)$. Therefore, $u\in\dom (H^d)$, and this proves part
(i) of theorem \ref{T:main-2}. $\hfill\square$

\subsection{Proof of part (ii) of Theorem \ref{T:main-2}}

To prove part (ii) of theorem \ref{T:main-2} we need to show that
\begin{equation*}
\dom(H^d) = \mathcal{D} := \{u \in W^{2,p}_{\mu}(M) : Vu \in L^p_{\mu}(M)\},
\end{equation*}
where  $W^{2,p}_{\mu}(M)$ is as in~(\ref{E:sob-mu}).

Let $u$ be an arbitrary element of $\dom(H^d)=\dom(\hvmaxp)$, where the equality comes from the proof of part (i) of this theorem. Using the estimate~(\ref{more_coer_est_1_1}) and~(\ref{E:conv-rel-8}) we obtain $du\in L^p_{\mu}$ and $Vu\in L^p_{\mu}$. Similarly, from~(\ref{more_coer_est_1_2}) and~(\ref{E:conv-rel-8}) we get $\hess u\in L^p_{\mu}$. Therefore, $\dom(H^d) \subset \mathcal{D}$.

Note that if $u\in \mathcal{D}$, then, in particular, $\hess u\in L^p_{\mu}$. Thus, looking at~(\ref{E:lap-hess}) and remembering that $\mcomp$ is dense in $W_{\mu}^{2,p}(M)$ (see lemma~\ref{main-2_density}(ii)), it follows that $\Delta u\in L^p_{\mu}$. Hence, to prove the inclusion $\mathcal{D} \subset \dom(H^d)$, it is enough to show that if $u \in \mathcal{D}$, then $(d\phi)^{\#}u\in L^p_{\mu}(M)$ and $X(u) \in L^p_{\mu}(M)$.

By part (ii) of lemma~\ref{main-2_density} and since $V\in C^1(M)$, it follows that $C_c^{\infty}(M)$ is dense in $\mathcal{D}$ under the graph norm, $\vert\vert u\vert\vert_{\mathcal{D}} :=
\vert\vert u\vert\vert_{W_{\mu}^{2,p}(M)} + \vert\vert Vu\vert\vert_{p,\mu}$. Thus, it suffices to show that for every $u \in C_c^{\infty}(M)$ we have
\begin{equation}\label{main_thm_2_bound}
\vert\vert (d\phi)^{\#}(u)\vert\vert_{p,\mu} + \vert\vert X(u)\vert\vert_{p,\mu}
\leq C\vert\vert u\vert\vert_{\mathcal{D}},
\end{equation}
for some constant $C > 0$.

We begin our proof of~(\ref{main_thm_2_bound}) by recalling the definition of $Q$ in~(\ref{E:def-Q}), with $\vep_{0}$ as in lemma~\ref{L:lemma-u-hmax}.

Referring to~(\ref{main_prop_eq_2}), for all $0<\tau<\vep_2$ (with $\vep_2$ as in lemma~\ref{more_coer_est_3}) there exists a constant $\widetilde{C}_{\tau}$ such that
\begin{align*}
\vert\vert Q^{1/2}\vert d(e^{-\phi/p}u)\vert\vert_{p} &\leq
\tau\vert\vert\Delta(e^{-\phi/p}u)\vert\vert_p + \widetilde{C}_{\tau}
\vert\vert Qu\vert\vert_{p, \mu} \\
&= \tau\bigg{\vert}\bigg{\vert}
\Delta u + \frac{2}{p}\langle d\phi, du\rangle - \frac{1}{p^2}\vert d\phi\vert^2u
-\frac{1}{p}u\Delta\phi\bigg{\vert}\bigg{\vert}_{p,\mu} +
\widetilde{C}_{\tau}
\vert\vert Qu\vert\vert_{p, \mu},
\end{align*}
for all $u\in\mcomp$, where in the equality we used the rules (p4) and (c2).

Estimating the norm $\|\cdot\|_{p,\mu}$ containing four terms, we get
\begin{equation*}
\vert\vert Q^{1/2}\vert d(e^{-\phi/p}u)\vert\vert_{p}
\leq
\tau\bigg{(}
\sqrt{n}\vert\vert u\vert\vert_{W_{\mu}^{2,p}(M)} +
\frac{2}{p}\vert\vert Q^{1/2}\vert du\vert\vert\vert_{p,\mu} +
\bigg{(}\frac{1}{p^2} + \frac{\sqrt{n}\vep_0}{p} \bigg{)}\vert\vert Qu\vert\vert_{p,\mu}
\bigg{)} +
\widetilde{C}_{\tau}\vert\vert Qu\vert\vert_{p, \mu},
\end{equation*}
for all $u\in\mcomp$, where we used~(\ref{E:lap-hess}) for the first term, we referred to~(\ref{E:d-phi-q}) for the second term, and we estimated the third and the fourth term in the same way as in~(\ref{main_prop_eq_5}).

Combining the preceding estimate with~(\ref{main_prop_eq_1}) we obtain
\begin{align*}
\vert\vert Q^{1/2}\vert du\vert\vert\vert_{p, \mu} &\leq
\vert\vert Q^{1/2}\vert d(e^{-\phi/p}u)\vert\vert\vert_{p} +
\frac{1}{p}\vert\vert Qu\vert\vert_{p,\mu} \\
&\leq
\tau\sqrt{n}\vert\vert u\vert\vert_{W_{\mu}^{2,p}(M)} +
\frac{2\tau}{p}\vert\vert Q^{1/2}\vert du\vert\vert\vert_{p,\mu} +
\bigg{(}
\frac{\tau}{p^2} + \frac{\sqrt{n}\vep_0\tau}{p} + \widetilde{C}_{\tau}
\bigg{)}\vert\vert Qu\vert\vert_{p,\mu}.
\end{align*}
Choosing $\tau$ small enough so that $0<\tau<\vep_2$ and $1-\frac{2\tau}{p}>0$ and rearranging the previous estimate, we get
\begin{equation}\label{E:almost-done}
\vert\vert Q^{1/2}\vert du\vert\vert\vert_{p, \mu} \leq
J_{\tau}\vert\vert u\vert\vert_{W_{\mu}^{2,p}(M)}  + G_{\tau}\vert\vert Qu\vert\vert_{p,\mu},
\end{equation}
for all $u\in\mcomp$, where
\begin{equation}\label{E:j-g-tau}
J_{\tau}:=\left(1-\frac{2\tau}{p}\right)^{-1}\tau\sqrt{n},\qquad G_{\tau}:=\left(\frac{\tau}{p^2} + \frac{\sqrt{n}\vep_0\tau}{p} + \widetilde{C}_{\tau}\right)\left(1-\frac{2\tau}{p}\right)^{-1}.
\end{equation}

Referring to the condition (A4) in
section \ref{assump_V_X_etc}, we see that $\vert X\vert \leq \kappa Q^{1/2}$. Putting $\kappa_1:=\max\{1,\kappa\}$ and using $|(d\phi)^{\#}u|\leq |d\phi||du|$, the property~(\ref{E:d-phi-q}), and the estimate~(\ref{E:almost-done}) we get
\begin{align}\label{E:almost-done-1}
&\vert\vert (d\phi)^{\#}u\vert\vert_{p,\mu} +
\frac{1}{\kappa_1}\vert\vert X(u)\vert\vert_{p,\mu} \leq
2\vert\vert Q^{1/2}\vert du\vert\vert\vert_{p,\mu} \leq
2J_{\tau}\vert\vert u\vert\vert_{W_{\mu}^{2,p}(M)} + 2 G_{\tau}\vert\vert Qu\vert\vert_{p,\mu} \nonumber\\
&\leq 2J_{\tau}\vert\vert u\vert\vert_{W_{\mu}^{2,p}(M)} +  2 G_{\tau}(\vert\vert \vert d\phi\vert^2u\vert\vert_{p,\mu} +
\vert\vert Vu\vert\vert_{p,\mu}) \nonumber\\
&+2 G_{\tau}\bigg{(}\frac{C_{\vep_0}}{\vep_0} + \beta_2 + 1\bigg{)}\vert\vert u\vert\vert_{p,\mu},
\end{align}
for all $u\in\mcomp$, where in the last inequality we used the definition~(\ref{E:def-Q}).

The term $\vert\vert \vert d\phi\vert^2u\vert\vert_{p,\mu}$ is handled with the help of lemma~\ref{main-2_bound_2}, which says that
\begin{equation}\label{main_thm_2_bound_1}
\vert\vert \vert d\phi\vert^2u\vert\vert_{p,\mu} \leq
\widetilde{K}\vert\vert u\vert\vert_{W_{\mu}^{2,p}(M)},
\end{equation}
where $\widetilde{K}> 0$ is a constant.

Combining \eqref{main_thm_2_bound_1} and~(\ref{E:almost-done-1}) and keeping in mind the definition of $\kappa_1$, gives
\begin{equation*}
\vert\vert (d\phi)^{\#}u\vert\vert_{p,\mu} +
\vert\vert X(u)\vert\vert_{p,\mu} \leq \kappa_1\bigg{[}
(2J_{\tau}+2G_{\tau})\bigg{(}\frac{C_{{\vep}_0}}{\vep_0} + \beta_2 + 1 + \widetilde{K}
\bigg{)}
\bigg{]}
\vert\vert u\vert\vert_{\mathcal{D}},
\end{equation*}
for all $u\in\mcomp$, where $J_{\tau}$ and $G_{\tau}$ are as in~(\ref{E:j-g-tau}).

This establishes \eqref{main_thm_2_bound} and thus proves part (ii) of
theorem \ref{T:main-2}. $\hfill\square$

\section{Proof of Theorem~\ref{T:main-3}}\label{SS:FK-proof-1}
In our discussion we closely follow the proof of theorem 1.3 in~\cite{Gu-10} for $\nabla^{\dagger}\nabla +V$ in $L^2(\vbn)$, with the following modifications: (i) we need to account for the presence of $Z$ in~(\ref{E:stoch-diff-1}) and (ii) we need to replace the references to $L^2$-type statements about abstract $C_0$-semigroups (and their generators) by the corresponding $L^p$-type statements with $1<p<\infty$.

As in in~\cite{Gu-10}, we first tackle the simpler case $0\leq V\in C(\End \vbn)\cap L^{\infty}(\End \vbn)$.
In what follows, we let $L^{0}_{\mu}(\vbn)$ stand for the Borel measurable sections. With the notations of section~\ref{SS:FK-OU}, we define the operators $Q_{t}\colon L^{p}_{\mu}(\vbn)\to  L^{0}_{\mu}(\vbn)$, $t\geq 0$, as follows:
\[
(Q_{t}f)(x):=\mathbb{E}\left[\mathscr{V}^{x}_{t}\slash\slash^{t,-1}_{x}f(Y_{t}(x))\right].
\]
We first verify that for all $t\geq 0$, the operators $Q_{t}$ are bounded (in the sense $L^{p}_{\mu}(\vbn)\to L^{p}_{\mu}(\vbn)$). Although this was done in the proof of proposition 5.2 in~\cite{Mil-Sar-19} for $\nabla^{\dagger}\nabla+ V$ in $L^p(\vbn)$, we include the full argument here, which is based on the use of H\"{o}lder's inequality (with $q$ and $p$ related through $p^{-1}+q^{-1}=1$). Denoting by $\rho_t(x,y)$ the density of the diffusion $Y_t(x)$ with respect to the measure $d\mu$ in~(\ref{E:meas-mu}), we have
\begin{align*}
\|Q_tf\|_{p,\mu}^p &\leq e^{pt\|V\|_{\infty}}\int_{M}\mathbb{E}[|f(Y_t(x))|_{Y_t(x)}]^p\,d\mu(x) \\
           &= e^{pt\|V\|_{\infty}}\int_M\bigg{(} \int_M|f(y)|_y\rho_t(x,y)\,d\mu(y)\bigg{)}^p \,d\mu(x) \\
           &\leq e^{pt\|V\|_{\infty}}\int_M\int_M|f(y)|^p_y\rho_t(x,y)\,d\mu(y)
           \bigg{(}\int_M\rho_t(x,z)\,d\mu(z) \bigg{)}^{p/q}\,d\mu(x) \\
           &= e^{pt\|V\|_{\infty}}\|f\|_{p,\mu}^p,
\end{align*}
where in the first inequality we used the property $|\mathscr{V}^{x}_{t}|\leq e^{t\|V\|_{\infty}}$ (see the estimate for the expression~(15) in~\cite{Gu-10}) and in the last equality we used the assumption (SC), that is, the property $\int_{M}\rho_t(x,z)\,d\mu(z)=1$ for all $t>0$ and all $x\in M$.

The next step is to justify the following formula for all $\Psi\in \vcomp$:
\begin{align}\label{E:FKI-1-a}
&d\left(\slash\slash^{t,-1}_{x}\Psi(Y_{t}(x))\right)=\slash\slash^{t,-1}_{x}\sum_{j=1}^{l}(\nabla_{A_{j}}\Psi)(Y_{t}(x))d W_{t}^{j}\nonumber\\
&-\slash\slash^{t,-1}_{x}(\nabla^{\dagger}{\nabla}\Psi-\nabla_Z\Psi)(Y_{t}(x))dt,
\end{align}
where we used the same notations as in~(\ref{E:diff-start-0}) and~(\ref{E:Z-X-PHI}), with $d$ being the It\^o differential.

We point out that in proposition 1.2 of~\cite{Thalmaier-17} the formula~(\ref{E:FKI-1-a}) was established for smooth sections of a vector bundle $\vbn$ over $M$, viewed as an associated bundle of the orthonormal frame bundle $\pi\colon O(M)\to M$, with structure group $O(n)$ and with a covariant derivative $\nabla$ on $\vbn$ induced from the Levi--Civita connection on $TM$. The starting point of the approach used in~\cite{Thalmaier-17} is the diffusion $U_t(u)$ obtained by solving an intrinsic analogue of the equation~(\ref{E:stoch-diff-1}) on $O(M)$.

Since we are using an extrinsic construction as in~\cite{Gu-10}, in which one starts from~(\ref{E:diff-start-0}), we will include some explanations on how to get~(\ref{E:FKI-1-a}) using this approach. As in proposition 2.5 of~\cite{Gu-10}, we apply the equation~(\ref{E:stoch-diff-1})
to the function
\begin{equation}\label{E:F-PSI-1}
F_{\Psi}\colon \mathscr{P}(\vbn)\to \CC^{m},\qquad F_{\Psi}(u):=u^{-1}\Psi(\pi (u)),
\end{equation}
where $m=\rank\vbn$,  the notation $\mathscr{P}(\vbn)$ is as in section~\ref{SS:FK-OU}, and $\pi\colon \mathscr{P}(\vbn)\to M$ is the projection map.

To transform~(\ref{E:stoch-diff-1}) into (\ref{E:FKI-1-a}) we use the definitions~(\ref{E:F-PSI-1}) and~(\ref{E:spt-1}) and the following two formulas recalled in lemma 2.4 of~\cite{Gu-10}:
\begin{equation*}
Z^*F_{\Psi}=F_{\nabla_{Z}\Psi},\qquad \displaystyle\sum_{j=1}^{l}(A^{*}_j)^2F_{\Psi}=F_{-\nabla^{\dagger}{\nabla}\Psi}.
\end{equation*}
The first formula explains the presence of the item $\nabla_Z\Psi$ in (\ref{E:FKI-1-a}), while the item $\nabla^{\dagger}{\nabla}\Psi$ is a consequence of the second formula and the switch from the Stratonovich differential $\underbar{d}$ to the It\^o differential $d$.

Having established~(\ref{E:FKI-1-a}), we use the It\^o product rule and~(\ref{E:def-script-v}) to infer
\begin{align}\label{E:FKI-1-b}
&d(\mathscr{V}^{x}_{t}\slash\slash^{t,-1}_{x}\Psi(Y_{t}(x)))=-\mathscr{V}^{x}_{t}\slash\slash^{t,-1}_{x}(V\Psi)(Y_t(x))dt\nonumber\\
&+\mathscr{V}^{x}_{t}\slash\slash^{t,-1}_{x}\sum_{j=1}^{l}(\nabla_{A_{j}}\Psi)(Y_{t}(x))d W_{t}^{j}\nonumber\\
&-\mathscr{V}^{x}_{t}\slash\slash^{t,-1}_{x}(\nabla^{\dagger}{\nabla}\Psi+\nabla_{(d\phi)^{\sharp}}\Psi-\nabla_{X}\Psi)(Y_{t}(x))dt,
\end{align}
where we also used~(\ref{E:Z-X-PHI}) to rewrite $\nabla_{Z}\Psi$.

Using the assumption (SC) and the relation $\Psi\in \vcomp$, it can be seen (as in the proof of theorem 1.1 of~\cite{Gu-10}) that the second term on the right hand side of~(\ref{E:FKI-1-b}) is an $\vbn_{x}$-valued continuous martingale. Therefore, applying the expectation $\mathbb{E}[\cdot]$ to both sides of~(\ref{E:FKI-1-b}) and using the definition of $Q_t$ we obtain
\begin{equation*}
(Q_t\Psi)(x) = \Psi(x) - \int_0^tQ_sP^{\nabla}\Psi(x)\,ds,
\end{equation*}
for all $\Psi \in \vcomp$, where $P^{\nabla}$ is as in~(\ref{E:def-H}). This leads to the equation

\begin{equation}\nonumber
\frac{d}{dt}(Q_t\Psi) = -Q_tP^{\nabla}\Psi,\qquad Q_0\Psi = \Psi,
\end{equation}
for all $\Psi \in \vcomp$.

From hereon the proof enters the realm of operator theory, where the crucial role is played by the fact that, under the assumptions (F1)--(F3) and the geodesic completeness of $M$, our operator $-\hvmaxp$ generates a (quasi-contractive) $C_0$-semigroup and $\hvmaxp$ has $\vcomp$ as a core.

As $-\hvmaxp$ is the generator for the $C_0$-semigroup $e^{-t\hvmaxp}$, we can use the abstract lemma II.1.3(ii) of \cite{engel-nagel}, to infer
\begin{equation}\nonumber
\frac{d}{dt}\left(e^{-t\hvmaxp}\Psi\right) = -e^{-t\hvmaxp}P^{\nabla}\Psi, \qquad
 \left(e^{-t\hvmaxp}|_{t=0}\right)\Psi = \Psi,
\end{equation}
for all $\Psi \in \vcomp$, where $P^{\nabla}$ is as in~(\ref{E:def-H}).

Comparing the last two differential equations, we get $Q_t\Psi = e^{-t\hvmaxp}\Psi$ for all $\Psi \in \vcomp$, which leads to the equality
$Q_tf = e^{-t\hvmaxp}f$, for all $f \in L^p_{\mu}(\vbn)$. This completes the proof for the case $0\leq V\in C(\End \vbn)\cap L^{\infty}(\End \vbn)$.

For the rest of the proof we use the approximation procedure from sections 3 and 4 in~\cite{Gu-10}. To illustrate the flavor of the argument and to point out a few small changes (due to the presence of the drift term~(\ref{E:stoch-diff-1}) and the $L^p$-environment), we include the explanations for the case $0\leq V \in L^{\infty}(\End \vbn)$.

To distinguish between the expressions $P^{\nabla}$ in~(\ref{E:def-H}) with different potentials $V$, for the reminder of the proof we will use
the notation $P^{\nabla, V}$ instead of $P^{\nabla}$.

Starting with $0\leq V \in L^{\infty}(\End \vbn)$, we can use lemma 3.1 of \cite{Gu-10} to produce a sequence
$0\leq V_k \in  C(\End \vbn)\cap L^{\infty}(\End \vbn)$ satisfying the following two properties:
\begin{equation}\label{E:approx-norm-infty}
|V_k(x)|\leq \|V\|_{\infty}, \quad \textrm{for all }x\in M\textrm{ and }k\in\ZZ_{+},
\end{equation}
and
\begin{equation*}
\|P^{\nabla, V_k}\Psi -P^{\nabla, V}\Psi\|_{p, \mu} \rightarrow 0,
\end{equation*}
for all $\Psi\in\vcomp$, as $k \rightarrow \infty$.  (In~(\ref{E:approx-norm-infty}), the symbol $|\cdot|$  is the norm of a linear operator $\vbn_x\to\vbn_x$.)

Denoting by $\hvmaxp^{(k)}$ the maximal operator in $L_{\mu}^p(\vbn)$ corresponding to $P^{\nabla, V_k}$, we see (by theorem~\ref{T:main-1}) that  $-\hvmaxp^{(k)}$ generates a $C_0$-semigroup in $L_{\mu}^p(\vbn)$, which we denote by $e^{-t\hvmaxp^{(k)}}$, and, furthermore, $\vcomp$ is a common core for $\hvmaxp^{(k)}$ and $\hvmaxp$. This allows us to use the abstract theorem III.4.8 of \cite{engel-nagel}, known as the Kato--Trotter theorem, to infer
\begin{equation*}
e^{-t\hvmaxp^{(k)}}f \rightarrow e^{-t\hvmaxp}f, \quad \textrm{in }L_{\mu}^p(\vbn),
\end{equation*}
for all $f \in L_{\mu}^p(\vbn)$, $1 < p < \infty$.

As $0\leq V_k \in  C(\End \vbn)\cap L^{\infty}(\End \vbn)$, we may apply~(\ref{E:F-K}) to $(e^{-t\hvmaxp^{(k)}}f)(x)$, which allows us to  rewrite the preceding convergence relation as
\begin{equation*}
\mathbb{E}\left[\mathscr{V}^{x}_{k,t}\slash\slash^{t,-1}_{x}f(Y_{t}(x))\right] \rightarrow (e^{-t\hvmaxp}f)(x),
\end{equation*}
in $L_{\mu}^p(\vbn)$, where $\mathscr{V}^{x}_{k,t}$ satisfies~(\ref{E:def-script-v}) with $V=V_k$.

It remains to show that
\begin{equation*}
\mathbb{E}\left[\mathscr{V}^{x}_{k,t}\slash\slash^{t,-1}_{x}f(Y_{t}(x)) \right] \rightarrow \mathbb{E}\left[\mathscr{V}^{x}_{t}\slash\slash^{t,-1}_{x}f(Y_{t}(x))\right],
\end{equation*}
as $k\to\infty$.

The last convergence relation can be inferred by using the dominated convergence theorem together with the following observations:
\begin{equation*}
\mathscr{V}^{x}_{k,t}\slash\slash^{t,-1}_{x}f(Y_{t}(x))\rightarrow\mathscr{V}^{x}_{t}\slash\slash^{t,-1}_{x}f(Y_{t}(x)), \quad\mathbb{P}-a.s.,
\end{equation*}
which follows from lemma 3.2 in~\cite{Gu-10} (with the the diffusion $Y_t$ in place of the usual Brownian motion on $M$);
\begin{equation*}
|\mathscr{V}^{x}_{k,t}\slash\slash^{t,-1}_{x}f(Y_{t}(x))|_{\vbn_x}\leq e^{t\|V\|_{\infty}}|f(Y_t(x))|_{\vbn_{Y_t(x)}},
\end{equation*}
which is a consequence of~(\ref{E:approx-norm-infty}) and the definition of $\mathscr{V}^{x}_{k,t}$ (see the estimate of the expression~(15) in~\cite{Gu-10});
\begin{equation}\label{E:obs-3}
\mathbb{E}\left[|f(Y_t(x))|_{\vbn_{Y_t(x)}}\right]=(e^{-t\hvmaxp^{d,0}})|f(x)|<\infty,
\end{equation}
where $\hvmaxp^{d,0}$ is the maximal operator in $L_{\mu}^p(M)$ corresponding to $P^{d}$ in~(\ref{E:def-H-d}) with $V=0$.

The observation~(\ref{E:obs-3}) is a consequence of the following properties: (i) under our hypotheses, $-\hvmaxp^{d,0}$ generates a $C_0$-semigroup in $L_{\mu}^p(M)$;  (ii) the function $x\mapsto |f(x)|$ belongs to $L_{\mu}^p(M)$. Thus, the formula (\ref{E:F-K}) is applicable for $(e^{-\hvmaxp^{d,0}})|f(x)|$. This concludes the proof of the theorem for the case $0\leq V \in L^{\infty}(\End \vbn)$.

In the general case $0\leq V \in \lloc^{\infty}(\End\vbn)$, we can just follow the approximation procedure used in the proof of theorem 1.3 in \cite{Gu-10}, keeping in mind that we need to replace the usual Brownian motion on $M$ by the diffusion $Y_t(x)$, the space $L^2(\vbn)$ by $L_{\mu}^p(\vbn)$, the expression $\nabla^{\dagger}\nabla +V$ by $P^{\nabla}$, and $d^{\dagger}d$ by $P^{d}$ with $V=0$. $\hfill\square$

\begin{appendices}
\section{Harmonic Coordinates}\label{appd_1}

In this appendix, we collect the basic facts about harmonic coordinates on Riemannian manifolds. Most of this material is taken from \cite{hebey} and Appendix B in
\cite{GP-2015}.

Fix a $n$-dimensional Riemannian manifold $(M, g)$ without boundary, and let $\nabla$ denote the associated Levi-Civita connection.

\begin{definition}
Let $(U, \psi)$ be a coordinate chart of $M$ with associated coordinates $x_i$. We say that the coordinates define a harmonic coordinate chart if $\Delta x_i = 0$ for all
$1 \leq i \leq n$, where $\Delta$ is the scalar Laplacian on $M$.
\end{definition}

Given a point $x \in M$, there exists a harmonic coordinate chart $(U, \psi)$
about $x \in M$. This follows from the classical fact that there always exists a smooth solution of $\Delta u = 0$ with $u(x)$ and $\partial_iu(x)$ prescribed. The solutions
$y_i$ of
\begin{equation*}
 \begin{cases}
      \Delta y_i = 0 \\
      y_i(x) = 0 \\
      \partial_jy_i(x) = \delta_{ij}
    \end{cases}
\end{equation*}
are then the desired harmonic coordinates. Since composing with linear transformations does not affect the fact that coordinates are harmonic, one can always choose the harmonic coordinate system about $x$ so that in these coordinates
$g_{ij}(x) = \delta_{ij}$ for any $i$, $j$.

\begin{definition}\label{D:def-a-2}
Let $x \in M$, $Q \in (1, \infty)$, $k \in \mathbb{N}_{\geq 0}$,
$\alpha \in (0,1)$. The $C^{k,\alpha}$-harmonic radius of $M$ with accuracy $Q$ at $x$ is defined to be the largest real number $r_{Q, k, \alpha}(x)$ with the following property: The geodesic ball $B_{r_{Q, k, \alpha}(x)}(x)$, centred at $x$, admits
a centred harmonic coordinate chart
\begin{equation*}
\psi : B_{r_{Q, k, \alpha}(x)}(x) \rightarrow \R^n
\end{equation*}
such that
\begin{equation*}
Q^{-1}(\delta_{ij}) \leq (g_{ij}) \leq Q(\delta_{ij})
\end{equation*}
in $B_{r_{Q, k, \alpha}(x)}(x)$ as symmetric bilinear forms, and for all
$1 \leq i, j \leq n$
\begin{align*}
&\sum_{\beta \in \mathbb{N}^n, 1\leq \vert\beta\vert\leq k}
r_{Q, k, \alpha}(x)^{\vert\beta\vert} \sup_{x' \in B_{r_{Q, k, \alpha}(x)}(x)}
\vert \partial_{\beta}g_{ij}(x')\vert \\
&+
\sum_{\beta \in \mathbb{N}^n, \vert\beta\vert = k} r_{Q, k, \alpha}(x)^{k + \alpha}
\sup_{x', x'' \in B_{r_{Q, k, \alpha}(x)}(x), x' \neq x''}
\frac{\vert \partial_{\beta}g_{ij}(x') - \partial_{\beta}g_{ij}(x'')\vert}
{d_g(x', x'')^{\alpha}} \\
&\leq Q - 1,
\end{align*}
where $d_g$ denotes the distance associated to the Riemannian metric $g$.
\end{definition}

A coordinate system as defined in the above definition is known as a
$C^{k,\alpha}$-harmonic coordinate system with accuracy $Q$ on
$B_{r_{Q, k, \alpha}(x)}(x)$.

The main result about the harmonic radius is that control on the first $k$ derivatives of the Ricci curvature, together with control on the injectivity radius implies control
on $r_{Q, k+1, \alpha}(x)$.

For any open set $\Omega \subseteq M$ and any $\vep > 0$ let
\begin{equation*}
\Omega(\vep) := \{ x : x \in M, d_g(x, \Omega) < \vep \}
\end{equation*}
be the $\vep$-neighbourhood of $\Omega$.

We now recall theorem 1.2 from \cite{hebey}:

\begin{theorem}
Let $Q \in (1,\infty)$, $\alpha \in (0,1)$. Fix an open subset $\Omega \subset M$ and numbers $k \in \mathbb{N}_{\geq 0}$, $\vep > 0$, $r > 0$, $c_0, \ldots, c_k > 0$
with
\begin{equation*}
\vert \nabla^j\ric_{M}(x)\vert_x \leq c_j, r_{\textrm{inj}}(x) \geq r \text{ for all }
x \in \Omega(\vep), j \in \{0,\ldots,k\}.
\end{equation*}
Then there is a constant $C = C(n, Q, k, \alpha, \vep, r, c_1,\ldots,c_k) > 0$,
such that for all $x \in \Omega$ one has $r_{Q, k+1, \alpha}(x) \geq C$.
\end{theorem}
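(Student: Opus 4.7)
The plan is to argue by contradiction via a blow-up/compactness scheme, closely following the strategy developed by Anderson and Hebey. Suppose the conclusion fails. Then there exist $Q$, $\alpha$, $k$, $\vep$, $r$, $c_0,\dots,c_k$ as in the hypothesis but a sequence of points $x_i\in\Omega$ such that $r_i:=r_{Q,k+1,\alpha}(x_i)\to 0$. The idea is to rescale each pointed Riemannian manifold $(M,g,x_i)$ to normalize the harmonic radius and then extract a subsequential limit whose flatness will contradict the extremality of the harmonic radius.

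First I would perform the blow-up: consider the rescaled metrics $\tilde g_i:=r_i^{-2}g$ and the pointed manifolds $(M,\tilde g_i,x_i)$. A direct scaling computation shows that the $C^{k+1,\alpha}$-harmonic radius (with accuracy $Q$) at $x_i$ with respect to $\tilde g_i$ equals $1$. Moreover, the hypotheses transform favorably: in the rescaled metric one has $|\tilde\nabla^{j}\operatorname{Ric}_{(M,\tilde g_i)}|_{\tilde g_i}\leq r_i^{2+j}c_j\to 0$ for $j=0,\dots,k$, and $r_{\mathrm{inj}}^{\tilde g_i}(y)\geq r/r_i\to\infty$ for $y$ in an $\vep/r_i$-neighborhood of $x_i$. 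Hence on larger and larger geodesic balls around $x_i$ the rescaled geometry has arbitrarily small covariant derivatives of Ricci and arbitrarily large injectivity radius.

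Next, I would apply a convergence theorem for manifolds with bounded geometry in harmonic coordinates. Specifically, using the definition of harmonic radius and the (already available) $C^{k+1,\alpha}$-control on the metric coefficients in harmonic charts of radius $1$ at $x_i$, one obtains uniform $C^{k+1,\alpha}$-bounds on the metric components $(\tilde g_i)_{ab}$ on fixed-size Euclidean balls. Via an Arzel\`a--Ascoli/diagonal argument, and after possibly changing charts, a subsequence of the pointed spaces $(M,\tilde g_i,x_i)$ converges in the pointed $C^{k+1,\beta}$-topology for any $\beta<\alpha$ to a complete pointed $C^{k+1,\alpha}$-Riemannian manifold $(M_\infty,g_\infty,x_\infty)$.

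The crux is then to identify the limit and derive a contradiction. Since $|\tilde\nabla^{j}\operatorname{Ric}_{(M,\tilde g_i)}|\to 0$ on compact sets for $j=0,\dots,k$, the limit metric $g_\infty$ is Ricci-flat with all covariant derivatives of Ricci vanishing; together with $r_{\mathrm{inj}}(x_\infty)=\infty$ and the $C^{k+1,\alpha}$-control, a standard argument (analyticity of harmonic-coordinate representations for Einstein metrics, or a direct uniqueness argument for harmonic coordinates on Euclidean space) identifies $(M_\infty,g_\infty)$ with flat Euclidean space $(\mathbb{R}^n,g_{\mathrm{Eucl}})$. But then $r_{Q,k+1,\alpha}(x_\infty)=\infty$, whereas the lower semicontinuity of the harmonic radius under $C^{k+1,\beta}$-convergence (which is proved by passing harmonic charts of the $\tilde g_i$ to the limit and verifying the accuracy inequality with a margin) forces $r_{Q,k+1,\alpha}(x_\infty)\leq \liminf_i 1=1$, a contradiction.

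The main obstacle is the lower-semicontinuity/passage-to-the-limit step: one must ensure that harmonic coordinates of the rescaled manifolds can be extracted as a subsequential limit yielding harmonic coordinates on $M_\infty$, and that the metric bounds in Definition~\ref{D:def-a-2} pass to the limit strictly; this is exactly where the choice of the H\"older exponent $\beta<\alpha$ becomes essential, and where the elliptic regularity for the equations $\Delta_g y^a=0$ (with $C^{k,\alpha}$-coefficients from the metric) must be invoked uniformly in $i$. Once this point is handled, the contradiction with the Euclidean limit closes the argument.
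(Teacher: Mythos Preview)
The paper does not prove this statement; it is quoted verbatim as theorem~1.2 from Hebey's book~\cite{hebey} and used as a black box in the appendix on harmonic coordinates. Your sketch is the standard Anderson--Cheeger blow-up and compactness argument that Hebey employs, so in that sense you are reconstructing the cited proof rather than offering an alternative.

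One caution on the semicontinuity step. In the standard argument the passage to the limit works in the opposite direction from what you wrote: harmonic charts of the rescaled metrics $\tilde g_i$ (which exist at radius $1$ with $C^{k+1,\alpha}$-accuracy $Q$) converge, after passing to $C^{k+1,\beta}$ with $\beta<\alpha$, to harmonic charts for $g_\infty$, and then elliptic regularity for $\Delta_{g_\infty}y^a=0$ upgrades this back to $C^{k+1,\alpha}$-control. This shows the limit harmonic radius is at least $1$, not at most $1$; the contradiction is obtained because harmonic charts on the Euclidean limit of any large radius $R$ can be pulled back (via the convergence) to give, for large $i$, harmonic charts for $\tilde g_i$ of radius close to $R$ with accuracy strictly better than $Q$, forcing $r_{Q,k+1,\alpha}^{\tilde g_i}(x_i)>1$ eventually, contradicting the normalization. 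Also note that in the full argument one does not simply pick $x_i$ with $r_i\to 0$; one first passes to a point minimizing a scale-invariant quantity such as $r_{Q,k+1,\alpha}(y)/\operatorname{dist}(y,\partial\Omega(\vep))$ so that the harmonic radius is comparable to $1$ on balls of fixed $\tilde g_i$-radius, which is what actually drives the compactness. Your outline glosses over both of these points, but the overall strategy is the right one and matches the cited reference.
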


This result implies $r_{Q,j, \alpha}(x) > 0$ for all $x \in M$. One calls the number
\begin{equation}\label{notation-a-1}
r_{Q,j,\alpha}(M) := \inf_{x \in M}r_{Q,j, \alpha}(x)
\end{equation}
the $C^{k,\alpha}$-harmonic radius for the accuracy $Q$.
\end{appendices}

\section*{Acknowledgements}
HS wishes to thank Lashi Bandara and Alex Blumenthal for discussions on covering theorems on Riemannian manifolds. HS acknowledges support from the Australian Research Council via grant FL170100020. OM would like to thank Batu G\"uneysu for conversations on Feynman--Kac formula and stochastic completeness.

\end{document}